\newcommand{\Stab}{\mathrm{Stab}}
 \newcommand{\Ker}{\mathrm{Ker}}
  \newcommand{\Aut}{\mathrm{Aut}}
 \newcommand{\Mult}{\mathrm{mult}} 
  \newcommand{\Fix}{\mathrm{Fix}}
 \newcommand{\Pic}{\mathrm{Pic}}
 \newcommand{\PROJ}{\mathbb{P}}
 \newcommand{\Red}[1]{\textcolor{red}{#1}}
 \newcommand{\Z}{\mathbb{Z}}
 \newcommand{\C}{\mathbb{C}}
  \newcommand{\Ha}[1]{\widehat{#1}}
  \newcommand{\Ov}[1]{\overline{#1}}
 \newcommand{\Ti}[1]{\widetilde{#1}}
  \newcommand{\Ch}[1]{\check{#1}}
  \newcommand{\Bb}[1]{\mathbb{#1}}
  \newcommand{\Fr}[1]{\mathfrak{#1}}
   \newcommand{\Rm}[1]{\mathrm{#1}}
 \newcommand{\Al}{\alpha} 
 \newcommand{\Ga}{\Gamma} 
 \newcommand{\PHI}{\varphi}
 \newcommand{\Eps}{\varepsilon}
 \newcommand{\Sh}{\sharp}
 \newcommand{\RB}{\overline{R}}
 \newcommand{\RT}{\widetilde{R}}
 \newcommand{\DT}{\widetilde{\mathfrak{d}}}
 \newcommand{\ST}{\widetilde{S}}
 \newcommand{\PB}{\overline{P}} 
 \newcommand{\PT}{\widetilde{P}}
 \newcommand{\SigmaT}{\tilde{\sigma}}
  \newcommand{\GAB}{\overline{\Gamma}}
  \newcommand{\GAT}{\widetilde{\Gamma}}
   \newcommand{\KT}{\widetilde{K}}
  \newcommand{\KB}{\overline{K}}
    \newcommand{\CB}{\overline{C}}
  \newcommand{\PHIT}{\widetilde{\varphi}}  
    \newcommand{\PHIB}{\overline{\varphi}}
\newtheorem{prop}{Proposition}[section]
 \newtheorem{thm}[prop]{Theorem}
 \newtheorem{lem}[prop]{Lemma}
\theoremstyle{remark}
\newtheorem{define}[prop]{Definition}
\newtheorem{rmk}[prop]{Remark}
\numberwithin{equation}{section} 
\begin{document}
\baselineskip=18pt
 \title{Bounds for the order of automorphism groups of cyclic covering fibrations of a ruled surface}
\author{Hiroto Akaike}

\keywords{fibration, automorphism group}
 
\subjclass[2020]{Primary~14J50, Secondary~14J29}
 
\maketitle

\begin{abstract}
We study the order of automorphism groups of cyclic covering fibrations of a ruled surface.
Arakawa and later Chen studied it for hyperelliptic fibrations and gave the upper bound.
The purpose of present paper is to pursue the analog for cyclic covering fibrations. 
\end{abstract}

\section*{Introduction}
Let $f:S\to B$ be a surjective morphism from a complex smooth projective surface $S$ 
to a smooth projective curve $B$ with connected fibers. 
We call it a {\it fibration} of genus $g$ when a general fiber is a curve of genus $g$.
A fibration is called {\it relatively minimal}, when any $(-1)$-curve is not contained in fibers.
Here we call a smooth rational curve $C$ with $C^2=-n$ a $(-n)$-curve.
A fibration is called {\it smooth} when all fibers are smooth, {\it isotrivial} when all of the 
smooth fibers are isomorphic, {\it locally trivial} when it is smooth and isotrivial.

An automorphism of the fibration $f:S \to B$ is a pair of automorphisms $(\kappa_{S},\kappa_{B})\in \Aut(S)\times  \Aut(B)$
satisfying $ f \circ \kappa_{S} =  \kappa_{B} \circ  f $, that is, the diagram 
\begin{equation*}
\xymatrix{
S\ar[r]^-{\kappa_{S}}\ar[d]_-{f}\ar@{}[rd]|{\circlearrowright}&S\ar[d]^-{f}\\
B\ar[r]_-{\kappa_{B}}&B
}
\end{equation*}
is commutative.
We denote  by $\Aut (f)$ the group of all automorphisms of $f$.
Arakawa  \cite{Ara} and later Chen \cite{Chen} studied it for hyperelliptic fibrations and gave the upper bound 
of the order of a finite subgroup of $\Aut (f)$ in terms of $g$, $g(B)$ (the genus of $B$) and $K_f^2$ (the square of the relative canonical bundle).
The purpose of the present paper is to pursue the analog for primitive cyclic covering fibrations of type 
$(g,0,n)$ introduced by Enokizono in \cite{Eno}.
Roughly, they are $n$-fold cyclic branched coverings of ruled surfaces and hyperelliptic fibrations are nothing more than those with $n=2$.

Let $f:S \to B$ be a primitive cyclic covering fibration of type $(g,0,n)$ and $G$ a finite subgroup of $\Aut(f)$.
Then $G$ can be expressed as the extension of its horizontal part $H$ by its vertical part $K$, that is, 
$1\to K\to G\to H \to 1$ (exact). Here $H$ is a subgroup of $\Aut(B)$ consisting of $\kappa_{B}$ such that 
one can find a $\kappa_{S}\in \Aut(S)$ satisfying $(\kappa_{S},\kappa_{B})\in G$, while $K$ naturally acts on fibers of $f$ as a subgroup of $\Aut(S/B)$.
We can study $H$ by using known results for the automorphism groups of curves. 
Hence it suffices to manage $K$.
Note that we have the canonical cyclic subgroup of $\Aut(S/B)$ of order $n$, the covering transformation group, with 
a (singular) ruled surface as the quotient by its action.
We may assume that $K$ contains it.
Then $K$ induces a subgroup $\KT$ of the automorphism group of the ruled surface preserving the branch locus 
and our task is reduced to estimating the order of $\KT$.
For this purpose, we use the localization of the invariant $K_f^2$.
It is known that $K_f^2$ can be localized to fibers of $f$ (Lemma~1.7 of \cite{Eno}, or Lemma~\ref{lem1.7}).
We refine the localized $K_f^2$ further by using the quantity defined at a point on a fiber, 
and obtain the new expression of it in Proposition~\ref{loc'nofK}.
Then, we estimate the order of $\Ti{K}$ from above with the localized $K_f^2$ multiplied by an explicit function in $g$ and $n$ (Propositions~\ref{summarize}).

Now, the main results can be stated as follows.

\begin{thm}[Theorem~\ref{up.bd.1}]
Let $f:S \to B$ be a non-isotrivial primitive cyclic covering fibration of type $(g,0,n)$ 
with $n\geq 3$, $g \geq (3n-2)(n-1)/2$ if $n\geq 5$, $g \geq 21$ if $n=4$ and $g \geq 16$ if $n=3$.
Put 
\begin{align*}
\mu_{g,n}:=\left(1+\frac{1}{n-1}\right)^{2}\frac{2g+n-1}{g-1},\quad 
\mu_{g,n}':=\frac{2n^2(g+n-1)(2g+n-1)}{\left((n+1)g+n-1\right)(n-1)(g-1)}
\end{align*}
Let $G$ be a finite subgroup of $\Aut (f)$.
Then it holds
\[
  \Sh G \leq \begin{cases}
    6(2g(B)-1)\mu_{g,n}K_{f}^2 & (g(B) \geq 1 ) \\
     5\mu_{g,n}'K_{f}^2 & (g(B)=0) .
  \end{cases}
\]
\end{thm}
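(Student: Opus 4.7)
The plan is to combine the two halves of the extension $1\to K\to G\to H\to 1$ described in the introduction: bound the vertical part $K\subseteq \Aut(S/B)$ and the horizontal part $H\subseteq \Aut(B)$ independently, then multiply using $\Sh G = \Sh H \cdot \Sh K$.

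For $K$: assume that $K$ contains the order-$n$ covering transformation group $\langle\sigma\rangle$, so that $\widetilde{K}:=K/\langle\sigma\rangle$ descends to a group of automorphisms of the underlying ruled surface preserving the branch divisor of the cyclic cover. The key input here is Proposition~\ref{summarize}, which rests on the refined localization of $K_f^2$ in Proposition~\ref{loc'nofK}: it bounds $\Sh\widetilde{K}$ by an explicit rational function of $g$ and $n$ times $K_f^2$. Multiplying by the factor $n$ from $\langle\sigma\rangle$ then yields the vertical contribution, which (together with the $H$-contribution described below) is engineered to assemble into the $\mu_{g,n}K_f^2$, respectively $\mu_{g,n}'K_f^2$, factor of the final bound.

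For $H$: since $f$ is non-isotrivial, the set $\Delta\subseteq B$ of critical values of $f$ is finite, non-empty, and $H$-invariant, and this extra constraint bounds $\Sh H$ independently of $K_f^2$. When $g(B)\geq 2$, a Hurwitz-type argument exploiting that $H$ permutes $\Delta$ sharpens the classical $84(g(B)-1)$-bound to $\Sh H\leq 6(2g(B)-1)$, in analogy with the hyperelliptic estimates of Arakawa and Chen. When $g(B)=1$, non-emptiness of $\Delta$ rules out nontrivial translations, so $H$ embeds into a stabilizer subgroup of $\Aut(B)$ and has order at most $6$, matching $6(2g(B)-1)$. When $g(B)=0$, $H$ is a finite subgroup of $PGL_2(\C)$ (cyclic, dihedral, or polyhedral) preserving $\Delta$.

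The main obstacle will be the $g(B)=0$ case, where no linear-in-$g(B)-1$ Hurwitz input is available and $H$ can a priori be as large as the icosahedral group of order $60$. The plan is to absorb $\Sh H$ into a small universal constant via an orbit analysis on $\Delta$: each $H$-orbit contributes at least a definite quantum to the localized $K_f^2$, and combining this bookkeeping with the polyhedral classification of finite subgroups of $PGL_2(\C)$ and the bound on $\Sh\widetilde{K}$ from Proposition~\ref{summarize} (in its form specialized to $B=\PROJ^1$) tightens the naive product estimate down to $\Sh G\leq 5\mu_{g,n}'K_f^2$.
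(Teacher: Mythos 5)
Your overall architecture (use Proposition~\ref{summarize} for the vertical part $n\,\Sh\KB$, a Hurwitz-type analysis of $H$ acting on $B$ for the horizontal part, then combine) contains the right ingredients, but the way you combine them has a genuine gap: you propose to bound $\Sh H$ and $\Sh K$ \emph{independently} and multiply, which requires $\Sh H\leq 6(2g(B)-1)$ when $g(B)\geq 2$ and $\Sh H\leq 6$ when $g(B)=1$. Neither bound is true. For $g(B)\geq 2$ the classical Hurwitz bound $\Sh H\leq 84(g(B)-1)$ is attained by Hurwitz groups, and the constraint that $H$ permutes the finite set of critical values is vacuous (any finite group action preserves some finite set), so it cannot "sharpen" $84(g(B)-1)$ down to $6(2g(B)-1)=12g(B)-6$. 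For $g(B)=1$, non-emptiness of $\Delta$ does not rule out translations: a finite-order translation permutes a suitable finite set, so $H$ can contain a large translation subgroup and be of unbounded order. In both cases the quantity that is actually bounded by $6(2g(B)-1)$ (resp.\ $6$) is the \emph{point stabilizer} $r_p=\Sh\Stab_H(p)$, obtained from the Hurwitz formula for $B\to B/H$ via the inequality $r_1-1-r_1/r_2-r_1/r_3\leq 2g(B)-2$ in the worst case $g(B/H)=0$, $s=3$ --- not the order of $H$ itself.

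The missing idea is to use the $H$-equivariance of the localized invariant rather than a global bound on $\Sh H$. Choosing a fiber $\Ga_p$ as in Proposition~\ref{summarize} (so that $n\,\Sh\KB\leq \mu_{r,n}K_f^2(\Ga_p)$), the whole $H$-orbit of $p$ consists of $\Sh H/r_p$ points whose fibers each contribute the same amount, and since every local contribution is non-negative (Lemma~\ref{lowerbdofK}) one gets
\begin{align*}
K_f^2=\sum_{q\in B}K_f^2(\Ga_q)\geq \frac{\Sh H}{r_p}\,K_f^2(\Ga_p),
\end{align*}
whence $\Sh G=n\,\Sh\KB\cdot\Sh H\leq \mu_{r,n}\,r_p\,K_f^2$. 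Only the stabilizer order $r_p$ survives as an external factor, and that is what the case analysis on $g(B)$ must control. Interestingly, your sketch for $g(B)=0$ ("each $H$-orbit contributes a definite quantum to the localized $K_f^2$") is exactly this mechanism; it must be applied uniformly in all genera, not only as a rescue for the rational base case. For $g(B)=0$ one additionally needs, when $H$ is cyclic or dihedral (so that stabilizers can be as large as $\Sh H$), the hypothesis that $f$ has at least three singular fibers to find a point $p$ with $K_f^2(\Ga_p)>0$ and $r_p\leq 5$, together with the weaker $\mu_{r,n}'$-estimate of Proposition~\ref{summarize}; your proposal gestures at this but does not supply the argument.
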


\begin{thm}[Theorem~\ref{6/16.cor}]
Let $f:S \to \Bb{P}^1$ be a non-locally trivial primitive cyclic covering fibration of type $(g,0,n)$ with 
with $n\geq 3$, $g \geq (3n-2)(n-1)/2$ if $n\geq 5$, $g \geq 21$ if $n=4$ and $g \geq 16$ if $n=3$.
Then it holds
\begin{align*}
\Sh \Aut (S/B) \leq \left(1+\frac{1}{n-1}\right)^{2}\frac{2g+n-1}{2g-2} K_{f}^2.
\end{align*}
\end{thm}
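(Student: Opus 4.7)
The plan is to specialize the machinery behind Theorem~\ref{up.bd.1} to the case $G=\Aut(S/B)$, where the horizontal part of the extension $1\to K\to G\to H\to 1$ is automatically trivial, so that the bound on $\Sh G$ no longer incurs the factor coming from $\Aut(B)$.

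First I would observe that every $\kappa_{S}\in\Aut(S/B)$ satisfies $f\circ\kappa_{S}=f$, so setting $G:=\Aut(S/B)$ forces $H=\{1\}$ and hence $G=K$. Since $K$ contains the covering transformation group $C_{n}\subset\Aut(S/B)$ of the $n$-fold cyclic cover by definition, the induced exact sequence
\begin{equation*}
1\longrightarrow C_{n}\longrightarrow K\longrightarrow \KT\longrightarrow 1
\end{equation*}
gives $\Sh\Aut(S/B)=n\cdot\Sh\KT$, reducing the theorem to a bound on $\Sh\KT$, the induced subgroup of the automorphism group of the (singular) ruled surface $S/C_{n}$ preserving the branch divisor.

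Next I would invoke Proposition~\ref{summarize} together with the refined localization of Proposition~\ref{loc'nofK} to bound $\Sh\KT$ from above by an explicit function of $g,n$ times $K_{f}^{2}$. Multiplying by $n$ and simplifying should produce exactly $\left(1+\frac{1}{n-1}\right)^{2}\frac{2g+n-1}{2g-2}K_{f}^{2}=\frac{1}{2}\mu_{g,n}K_{f}^{2}$; the factor $\tfrac12$ improvement over what Theorem~\ref{up.bd.1} gives corresponds precisely to the disappearance of the $2g(B)-1$ (or the constant $5$ in the $g(B)=0$ case) that was accounting for the horizontal contribution.

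The main obstacle is accommodating the weakening of the hypothesis from ``non-isotrivial'' in Theorem~\ref{up.bd.1} to ``non-locally trivial'' here. Non-isotriviality enters the general theorem only to guarantee finiteness of $H\subset\Aut(B)$ via bounds on automorphisms of a positive-genus base, and this requirement is vacuous once $H=\{1\}$; on the other hand, non-local-triviality must still be assumed in order to avoid the degenerate situation $K_{f}^{2}=0$ that would render the inequality absurd. The technical point to verify is therefore that Proposition~\ref{summarize}, and the intermediate localization arguments it relies on, go through cleanly under the weaker hypothesis of non-local-triviality rather than non-isotriviality, so that the reduction above remains valid; the numerical collapse of $n\cdot(\text{bound from Proposition~\ref{summarize}})$ to the stated constant is then a routine algebraic manipulation.
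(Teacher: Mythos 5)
Your reduction to bounding $\Sh\KT$ via $\Sh\Aut(S/B)=n\,\Sh\Ov{K}$ is the right starting point, but the proposal has a genuine gap in the numerical step: the factor $\tfrac12$ in the target bound does \emph{not} come from dropping the horizontal factor $r_p$. Removing $r_p$ and applying Proposition~\ref{summarize} to a \emph{single} fiber with $K_f^2(\Ga_p)>0$ only yields $n\,\Sh\Ov{K}\leq \mu_{r,n}K_f^2(\Ga_p)\leq\mu_{g,n}K_f^2=\left(1+\frac{1}{n-1}\right)^2\frac{2g+n-1}{g-1}K_f^2$, which is exactly twice the claimed bound. The paper's actual mechanism is that non-local-triviality forces at least \emph{two} fibers with $K_f^2(\Ga_{p_i})>0$ (Lemma~\ref{lowerbdofK} gives $K_f^2(\Ga_p)\geq 0$ for all $p$, so these local contributions add), and each such fiber contributes at least $\frac{n\Sh\Ov{K}}{\mu_{r,n}}$ to $K_f^2$; summing over two fibers is what produces the extra $\tfrac12$.

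Even granting that idea, a second nontrivial issue remains that your plan does not address: Proposition~\ref{summarize} gives the good coefficient $\mu_{r,n}$ only when the fiber carries a singularity of $\Ov{R}$, a ramification point of $\Ov{\varphi}|_{\Ov{R}_h}$, or $\Ov{K}\cong\Z_2$; for a fiber over $p\in\Delta$ with neither, one only gets the weaker coefficient $\mu_{r,n}'$, which can be as large as $2\mu_{r,n}$. If one of the two available fibers is of this weak type, summing naively fails to reach $\tfrac12\mu_{g,n}$. The paper handles this with Lemma~\ref{Rhassing} (when $\Delta\neq\emptyset$ and $\Ov{K}$ is dihedral, a singular or ramification point must exist somewhere) and, in the case of exactly two contributing fibers, a further analysis showing the second fiber satisfies the strengthened estimate $(r-1)K_f^2(\Ga_{p_2})\geq \frac34\left(1-\frac1r\right)\left((n-1)r-2n\right)\Sh\Ov{K}$, which compensates for the deficit of the first. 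Your proposal treats the final step as a "routine algebraic manipulation," but this compensation argument is the substantive content of the proof and is missing.
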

In Sections~1 and 2, we recall basic properties of primitive cyclic covering fibrations mainly due to \cite{Eno}.
Among other things, we recast Section~5 of \cite{Eno} and consider the refined localization of $K_f^2$.
In Section~3, we study automorphisms of primitive cyclic covering fibrations of type $(g,0,n)$ to reduce the problem to that on a ruled surface.
Especially we discuss whether it is possible to perform a sequence of equivariant blowing-downs (with respect to $\KT$) in order to get 
a geometrically ruled surface on which the branch locus has only singular points of standardized multiplicities.
In Section~4, we estimate the order of $\Ti{K}$.
In Section~5, we show our main result, Theorem~\ref{up.bd.1}.
Section~6 is devoted to construct an example 
that shows our estimate of Theorem~\ref{6/16.cor} is almost optimal.

\vspace{3\baselineskip}

\textbf{Acknowledgememt}

The author is deeply grateful to Professor Kazuhiro Konno for his valuable advices and supports.
The author is particularly grateful to Professor Osamu Fujino and Associate Professor Kento Fujita for their continuous supports.  
The author also thanks to Doctor Makoto Enokizono for his precious advices.
The research is supported by JSPS KAKENHI No. 20J20055.

\tableofcontents


\section{Primitive cyclic covering fibrations}
We recall basic properties of primitive cyclic covering fibrations, most of which can be found in \cite{Eno}.

\begin{define}
Let $f:S \to B$ be a relatively minimal fibration of genus $g\geq2$.
We call it a {\it primitive cyclic covering fibration} of type $(g,0,n)$, when 
there are  a $($not necessarily relatively minimal $)$ fibration $\tilde{\PHI}:\PT \to B$ of 
genus $0$ ({\it i.e.} ruled surface) and a classical $n$-cyclic covering
$$
\tilde{\theta}:\ST=
\mathrm{Spec}_{\PT}\left(\bigoplus_{j=0}^{n-1} \mathcal{O}_{\PT}(-j\DT)\right)\to\PT
$$
branched over a smooth curve $\RT \in |n\DT|$ for some 
$n\geq2$ and $\DT \in \Pic(\PT)$ such that $f$ is the relatively minimal model of 
$\tilde{f}=\tilde{\PHI}\circ\tilde{\theta}$.
\end{define}

In addition, we employ the following notation. 
We denote by $\Sigma=\langle \tilde{\sigma} \rangle$ the covering transformation group of $\tilde{\theta}$ and by 
$\PHI: P\to B$ a relatively minimal model of $\tilde{\PHI}:\PT \to B$ with the natural contraction map 
$\tilde{\psi}: \PT \to P$. 
Furthermore, 
$\widetilde{F}$, $F$, $\widetilde{\Gamma}$ and $\Ga$ will denote general fibers of 
$\tilde{f}$, $f$,  $\tilde{\PHI}$ and $\PHI$, respectively.
To specify it is a fiber over $p\in B$, we write $F_p$, etc.

\begin{rmk}
\label{basic}
We note important properties of primitive cyclic covering fibrations in the following, which can be found in \cite{Eno}. 
\begin{itemize}
\item If $\sigma$ is the automorphism of $S$ over $B$ induced by a generator $\SigmaT$ of $\Sigma$, 
then we can assume that the natural morphism $\ST\to S$ is a minimal succession of blowing-ups 
that resolves all isolated fixed points of $\sigma$.
\item $\Ti{\varphi}$-vertical components of $\Ti{R}$ is a disjoint union of non-singular rational curves each of which is a $(-an)$-curve for some positive integer $a$.  
\item Arbitrary $\Ti{\varphi}$-vertical $(-1)$-curve and $\Ti{R}$ meet.
\end{itemize}
\end{rmk}

From now on, we let $f:S \to B$ be a  primitive cyclic covering fibration of type $(g,0,n)$ and freely use the above notation and convention.

Since the restriction map 
$\tilde{\theta}|_{\widetilde{F}}:\widetilde{F} \to \widetilde{\Gamma}\simeq \mathbb{P}^1$ is a classical 
$n$-cyclic covering branched over $\RT\cap\widetilde{\Ga}$, 
the Hurwitz formula for $\tilde{\theta}|_{\widetilde{F}}$ gives us
\begin{equation}
r:=\RT\widetilde{\Ga}=\frac{2\bigl(g+n-1\bigr)}{n-1}.
\end{equation}
From $\RT \in |n\DT|$, it follows that $r$ is a multiple of $n$.

Since $\tilde{\psi}: \PT \to P$ is a composite of blowing-ups, we can write 
$\tilde{\psi}=\psi_{1}\circ\cdots\psi_{N}$, 
where $\psi_{i}:P_{i}\to P_{i-1}$ denotes the blowing-up 
at $x_{i} \in P_{i-1}\;(i=1,\cdots, N)$, $P_{0}=P$ and $P_{N}=\PT$.
We define a reduced curve $R_{i}$ inductively as $R_{i-1}=(\psi_{i})_{\ast}R_{i}$ 
starting from $R_{N}=\RT$ down to $R_{0}=R$.
We call $R_{i}$ {\it branch locus on $P_{i}$}.
We also put $E_{i}=\psi_{i}^{-1}(x_{i})$ and $m_{i}=\mathrm{mult}_{x_{i}}R_{i-1}\;(i=1,\cdots, N)$.

\begin{lem}
In the above situation, the following hold for any $i=1,\cdots, N$.
\label{lem1.2}
\begin{itemize}
\item[$(1)$] Either $m_{i} \in n\Z_{\geq1}$ or $n\Z_{\geq1}+1$, where $\Z_{\geq 1}$ is the set of positive integers. 
Furthermore, $m_{i}\in n\Z_{\geq1}$
if and only if $E_{i}$ is not contained in $R_{i}$.

\item[$(2)$] $R_{i}={\psi}_{i}^{\ast}R_{i-1}-n[\frac{m_{i}}{n}]E_{i}$, where $[t]$ denotes the 
greatest integer not exceeding $t$.

\item[$(3)$] There exists a $\Fr{d}_{i} \in \Pic(P_{i})$ such that $\Fr{d}_{i}=\psi_{i}^{\ast}\Fr{d}_{i-1}-[\frac{m_{i}}{n}]E_{i}$
 and $R_{i}\sim n\Fr{d}_{i}$, $\Fr{d}_{N}=\DT$. 
\end{itemize}
\end{lem}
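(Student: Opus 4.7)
The plan is to prove (1), (2), (3) simultaneously by descending induction on $i$ from $N$ down to $1$, carrying the hypothesis that $R_i\sim n\Fr{d}_i$ in $\Pic(P_i)$; the base $i=N$ is the original data $\RT=R_N\sim n\DT=n\Fr{d}_N$.

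For the inductive step, I would first unpack $R_i$ relative to the blow-up $\psi_i:P_i\to P_{i-1}$. Since $R_i$ is reduced, write $R_i=R_i'+\Eps_i E_i$ with $\Eps_i\in\{0,1\}$ and $R_i'$ not containing $E_i$. Then $R_i'$ is automatically the strict transform of $R_{i-1}=(\psi_i)_\ast R_i$ under $\psi_i$, so
\[
\psi_i^\ast R_{i-1}=R_i'+m_iE_i
\quad\text{and hence}\quad
R_i=\psi_i^\ast R_{i-1}+(\Eps_i-m_i)E_i.
\]
The key numerical input is then to read off the $E_i$-coefficient on both sides using the canonical splitting $\Pic(P_i)=\psi_i^\ast\Pic(P_{i-1})\oplus\Z E_i$: intersection with $E_i$ gives $R_i\cdot E_i=m_i-\Eps_i$, while the inductive hypothesis $R_i\sim n\Fr{d}_i$ forces $R_i\cdot E_i\in n\Z$. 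Combined, $m_i\equiv\Eps_i\pmod n$ with $\Eps_i\in\{0,1\}$. This yields the congruence part of (1) together with the characterization $\Eps_i=0\Leftrightarrow E_i\not\subset R_i$, and the displayed equation then becomes exactly formula (2) after substituting $m_i-\Eps_i=n[m_i/n]$. The strict inequality $m_i\geq 2$ needed to exclude $m_i=1$ from the statement of (1) comes from the minimality of $\Ti\psi$ recorded in Remark~\ref{basic}: a blow-up at a smooth or off-curve point of $R_{i-1}$ would not contribute to resolving the isolated fixed points of $\sigma$ and could be omitted.

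For (3), I would keep the same decomposition and write $\Fr{d}_i=\psi_i^\ast D+cE_i$ uniquely in $\Pic(P_i)$. The $E_i$-coefficient of $n\Fr{d}_i$ is $nc$, which must match the $E_i$-coefficient $\Eps_i-m_i=-n[m_i/n]$ of $R_i$; this forces $c=-[m_i/n]$, so setting $\Fr{d}_{i-1}:=D$ gives the relation in (3). Applying $\psi_i^\ast$ to $R_{i-1}-n\Fr{d}_{i-1}$ and inserting (2) shows $\psi_i^\ast(R_{i-1}-n\Fr{d}_{i-1})\sim 0$; injectivity of $\psi_i^\ast$ on $\Pic$ then gives $R_{i-1}\sim n\Fr{d}_{i-1}$, which is the inductive hypothesis at level $i-1$.

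The entire argument is essentially bookkeeping inside $\Pic(P_i)=\psi_i^\ast\Pic(P_{i-1})\oplus\Z E_i$; the only non-formal ingredient is the minimality of $\Ti\psi$ needed to rule out $m_i\leq 1$. The main risks are sign conventions (strict versus total transform, and the sign of $E_i$-coefficients) and keeping the direction of induction straight; I do not anticipate any deeper obstacle.
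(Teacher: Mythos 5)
The paper itself gives no proof of this lemma: it is recalled from \cite{Eno} (see the sentence ``most of which can be found in \cite{Eno}'' at the start of Section~1), so there is no in-text argument to compare against. Your reconstruction of the divisibility bookkeeping is correct and is essentially the standard one: writing $R_i=R_i'+\Eps_iE_i$ with $R_i'$ the strict transform of $R_{i-1}$, using $\psi_i^{\ast}R_{i-1}=R_i'+m_iE_i$, extracting $m_i\equiv\Eps_i\pmod n$ from $R_i\cdot E_i\in n\Z$, and descending $\Fr{d}_i$ through the splitting $\Pic(P_i)=\psi_i^{\ast}\Pic(P_{i-1})\oplus\Z E_i$ all check out, and the descending induction delivers $(2)$, $(3)$ and the congruence half of $(1)$ exactly as you describe.

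The one genuine gap is the exclusion of $m_i=0$ and $m_i=1$, i.e.\ the claim that $m_i$ lies in $n\Z_{\geq1}\cup(n\Z_{\geq1}+1)$ rather than merely in $n\Z_{\geq0}\cup(n\Z_{\geq0}+1)$. Your justification --- that a blow-up at a point off $R_{i-1}$, or at a smooth point of it, ``could be omitted'' --- does not work as stated: the $\psi_i$ are the factorization of the fixed contraction $\Ti{\psi}:\PT\to P$ onto a chosen relatively minimal model, so the cluster of centres is determined by that morphism and there is no freedom to drop one of them (nor to re-choose $P$, which is later pinned down by the equivariance requirements of Lemma~\ref{hyoujyunka}). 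The correct input is the remaining two bullets of Remark~\ref{basic}. If $m_i=0$, the whole exceptional tree over $x_i$ on $\PT$ remains disjoint from $\RT$ and contains a $\Ti{\varphi}$-vertical $(-1)$-curve, contradicting the third bullet. If $m_i=1$, then $\Eps_i=1$, so the proper transform of $E_i$ on $\PT$ is a vertical component of the smooth curve $\RT$ while $R_i'E_i=1$; separating this component from the rest of the branch locus forces a later centre of multiplicity $2$, which contradicts the congruence already established for the later indices when $n\geq3$ (the only case used in this paper), and for $n=2$ one must additionally invoke the $(-an)$-curve condition of the second bullet. So the skeleton of your induction is right, but this step needs the geometric facts of Remark~\ref{basic} rather than an appeal to omitting blow-ups.
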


We say that a singular point of an $R_i$ is of type $n\mathbb{Z}$ (resp. $n\mathbb{Z}+1$) if its multiplicity 
is in $n\mathbb{Z}_{\geq 1}$ (resp. $n\mathbb{Z}_{\geq 1}+1$).

\begin{lem}
\label{up.bd.of.mult}
Let $f:S\to B$ be a primitive cyclic covering fibration of type $(g,0,n)$.
Then there is a relatively minimal model $\PHI: P\to B$ of $\tilde{\PHI}:\PT \to B$ such that 
$$
\mathrm{mult}_{x}R_{h}\leq\frac{r}{2}=\frac{g}{n-1}+1
$$
for all $x \in R_{h}$, where $R_{h}$ denotes the $\PHI$-horizontal part of $R$.
Moreover if $\mathrm{mult}_{x}R>\frac{r}{2}$, then 
$\mathrm{mult}_{x}R \in n\Z + 1$.
\end{lem}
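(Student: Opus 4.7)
The plan is to iteratively apply elementary transformations of the ruled surface $\Phi:P\to B$ to reduce the multiplicity of the branch locus. An elementary transformation centered at $x\in \Gamma_p$ consists of blowing up $\psi_1:P_1\to P$ at $x$ (with exceptional $(-1)$-curve $E_1$) and then contracting the strict transform $\widetilde{\Gamma}_p$, which is also a $(-1)$-curve, via a morphism $q:P_1\to P'$; the result is another relatively minimal ruled model $P'\to B$ of $\widetilde{\varphi}$, with branch locus $R':=q_*R_1$, where $R_1$ is the strict transform of $R$. I would choose $P$ minimizing a suitable invariant; for instance, first minimize the lexicographically sorted multiset of multiplicities of $R_h$ exceeding $r/2$, and then (for the ``Moreover'' part) minimize the number of singular points of $R$ of multiplicity $>r/2$ that lie in $n\Z$.

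For the first assertion, suppose $m:=\mathrm{mult}_x R_h>r/2$ for some $x$. Since $R_h\cdot \Gamma_p=r$, $x$ is the unique such bad point on $\Gamma_p$. Using the projection formula with $\psi_1^*R_h=R_{1,h}+mE_1$ and $\psi_1^*\Gamma_p=\widetilde{\Gamma}_p+E_1$, one obtains
\begin{align*}
\mathrm{mult}_{x'}R'_h \;=\; R_{1,h}\cdot \widetilde{\Gamma}_p \;=\; r-m \;<\; r/2,
\end{align*}
where $x':=q(\widetilde{\Gamma}_p)$. Moreover, multiplicities of $R'_h$ at the remaining new points on $\Gamma'_p:=q(E_1)$ are bounded by $R_{1,h}\cdot E_1=m$, so each is at most $m$. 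A bookkeeping argument on the sorted multiset then shows the invariant strictly decreases, and a minimizing $P$ has no bad points on $R_h$.

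For the ``Moreover'' part, assume $P$ minimizes as above and $m:=\mathrm{mult}_x R>r/2$ at some $x$. Since $R$ is reduced and the vertical components of $R$ are disjoint smooth fibers of $\Phi$, $\mathrm{mult}_x R_v\le 1$. Combined with $\mathrm{mult}_x R_h\le r/2$ this forces $\mathrm{mult}_x R_v=1$, $\mathrm{mult}_x R_h=m-1$, and $\Gamma_p\subset R_v$. Assume for contradiction $m\in n\Z$. Lemma~\ref{lem1.2}$(1)$ then gives $E_1\not\subset R_1$, so $\Gamma'_p$ is not a component of $R'_v$. Writing $R_1$ as the sum of $R_{1,h}$, $\widetilde{\Gamma}_p$, and the strict transforms of the remaining vertical components (which are disjoint from $E_1\cup\widetilde{\Gamma}_p$), an analogous computation yields
\begin{align*}
\mathrm{mult}_{x'}R' \;=\; (R_1-\widetilde{\Gamma}_p)\cdot \widetilde{\Gamma}_p \;=\; R_1\cdot \widetilde{\Gamma}_p+1 \;=\; r-m+1 \;\le\; r/2,
\end{align*}
the last inequality using $m\ge r/2+1$ when $r$ is even. (The case $r$ odd is automatic, since $m$ must equal $(r+1)/2$, and $(r+1)/2\in n\Z$ together with $r\in n\Z_{\ge 1}$ and $n\ge 2$ leads to a contradiction.) The remaining multiplicities of $R'$ on $\Gamma'_p$ are at most $m-1\le r/2$, so $P'$ also satisfies the first-claim bound and has strictly fewer $n\Z$-type bad points of $R$ than $P$, contradicting minimality.

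The main obstacle is identifying the right numerical invariant so that the minimization over relatively minimal ruled models is well-defined and each elementary transformation strictly decreases it; the intersection-theoretic computations on the blown-up surface themselves are a routine exercise.
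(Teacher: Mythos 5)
Your overall strategy---removing points of $R_h$ of multiplicity $>r/2$ by elementary transformations centred at the bad point, and minimizing a numerical invariant over relatively minimal models---is exactly the mechanism the paper relies on (it reappears in the proof of Lemma~\ref{hyoujyunka}, and the statement itself is quoted from Enokizono). Your intersection computations are correct: $\mathrm{mult}_{x'}R'_h=R_{1,h}\cdot\widetilde{\Gamma}_p=r-m$, the bound $\mathrm{mult}_{q(y)}R'_h\leq m$ for the other new points, and in the ``Moreover'' part $\mathrm{mult}_{x'}R'=r-m+1\leq r/2$ together with the divisibility discussion ruling out $m\in n\Z$ when $r$ is odd. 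The ``Moreover'' step is essentially complete.

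The gap is in the descent for the first assertion. The sorted multiset of multiplicities of $R_h$ exceeding $r/2$ need \emph{not} strictly decrease under one elementary transformation. If all $m$ local branches of $R_h$ at $x$ are smooth and share a common tangent direction transverse to $\Gamma_p$ (locally, $m$ branches $u=v+iv^{2}$, $i=1,\dots,m$, with fibre $\{v=0\}$), then their strict transforms all pass through a single point $y\in E_1\setminus\widetilde{\Gamma}_p$ with $\mathrm{mult}_{y}R_{1,h}=m$; on $P'$ the point $q(y)$ lies on the new fibre $\Gamma'_p$ and again has multiplicity exactly $m>r/2$, so your invariant is unchanged and the minimization does not close. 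The repair is to count \emph{infinitely near} points: since two points of multiplicity $>r/2$ in the same neighbourhood would force $R_h\Gamma_p>r$, the bad points over $x$ form a chain $x\succ y_1\succ\cdots\succ y_k$ in the resolution, and the elementary transformation replaces this chain by $y_1\succ\cdots\succ y_k$ (the new proper point $x'$ having multiplicity $r-m<r/2$). Hence the total number of infinitely near singular points of $R_h$ of multiplicity $>r/2$ drops by one at each step, the process terminates, and---since this invariant is a non-negative integer---the minimizing model exists, a point your formulation over the infinite family of relatively minimal models otherwise leaves open (note that an elementary transformation centred at a point off $R_h$ \emph{creates} a point of multiplicity $r$, so not every model is better than the one you start from).
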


Later, we will need to pay a special attention to the action of a finite subgroup of $\Aut (\PT/B)$ preserving $\RT$.
We show in Lemma~\ref{hyoujyunka} that there is a relatively minimal model $\PHI: P\to B$, compatible with such an action, 
which satisfies the inequality in Lemma~\ref{up.bd.of.mult}.
So we assume that $\PHI: P\to B$ always satisfies Lemma $\ref{up.bd.of.mult}$ in what follows.

\vspace{2\baselineskip}

\section{Localization of $K_f^2$}
We reconsider the argument in  Section~5 of \cite{Eno} and give localizations of $K_f^2$.
Let $f:S\to B$ be a primitive cyclic covering fibration of type $(g,0,n)$.
For an effective vertical divisor $T$ and $p \in B$, we denote the biggest subdivisor of $T$ whose support is in the fiber over $p$ by $T(p)$. 
Then $T = \sum_{p \in B}T(p)$.
We sometimes write $\sharp T$ to indicate the number of irreducible components of $T$.


First of all, let us recall the singularity index \cite{Eno}.
For any fixed $p\in B$, we consider all singular points (including infinitely near ones) of $R$ on $\Gamma_{p}$. 
For any positive integer $k$, we let $\alpha_{k}(\Ga_{p})$ be the number of singular points of multiplicity either $kn$ or $kn+1$ among them.
We put $\alpha_{k}:=\sum_{p\in B}\alpha_{k}(\Ga_{p})$ and call it the {\it $k$-th singularity index} of the fibration.
Let $A$ be the sum of all $(-n)$-curves contained in $\RT$ and put $\RT_0 := \RT - A$. 
Then the $0$-th singularity index is defined by $\alpha_{0}:=(K_{\tilde{\PHI}}+\RT_0)\RT_0$.
The ramification index of $\Ti{R}_{h}$ is defined by $\alpha^{+}_{0}:=(K_{\tilde{\PHI}}+\Ti{R}_{h})\Ti{R}_{h}$,
where  $\Ti{R}_{h}$ denotes the $\Ti{\varphi}$-horizontal part of $\Ti{R}$.
We put
\begin{align*}
\Al_0^{+} (\Ga_{p})&:=    \Ti{R}_h \Ti{\Ga}_{p}-\Sh(\Rm{Supp} (\Ti{R}_{h})\cap \Rm{Supp} (\Ti{\Ga}_{p})), \\
\Al_{0}(\Ga_{p})&:= \Al_{0}^{+}(\Ga_{p})-2\sharp(\Ti{R}_{0})_{v}(p).
\end{align*}
Then we note that $\alpha_{0}=\sum_{p\in B}\alpha_{0}(\Ga_{p})$ and 
$\alpha^{+}_{0}=\sum_{p\in B}\alpha^{+}_{0}(\Ga_{p})$.
For a positive integer $a$, we denote by $j_{a}(\Ga_{p})$ the number of $(-an)$-curves in $\Ti{R}_{v}$,
where $\Ti{R}_{v}$ denotes the $\Ti{\varphi}$-vertical part of $\Ti{R}$.
Put 
\begin{align*}
j _{a} :=\sum_{p \in B} j_{a}(\Ga_{p}),\quad \quad j (\Ga_{p}) :=\sum_{a \geq 1} j_{a}(\Ga_{p}).
\end{align*}

We consider the vertical part $\Ti{R}_v=\sum_{p\in B}\Ti{R}_v(p)$ of $\Ti{R}$ with respect to $\Ti{\PHI}:\Ti{P}\to B$.
To decompose $\Ti{R}_v(p)$, we define a family $\{L_{i}\}_i$ consisting of vertical irreducible curves in $\Ti{R}_v(p)$ as follows.
\begin{itemize}
\item[(i)] 
Let $E_{1}$ be the fiber $\Gamma_{p}$ or a $(-1)$-curve on a ruled surface between $\Ti{P}$ and $P$.
Let $L_1$ be the proper transform of $E_1$ on $\Ti{P}$.

\item[(ii)] For $i \geq 2$, $L_{i}$ is the proper transform of an exceptional $(-1)$-curve $E_i$ that is contracted to a point $x_i$ in $E_k$ or its proper transform for some $k<i$.
\end{itemize}
Put $\Fr{L}:=\{\{L_{i}\}_{i}\mid$ $\{L_{i}\}_{i}$ satisfies (i) and (ii)$\}$. 
If $\{L_{i}\}_{i} $ and $\{L'_{j}\}_{j}$ ($\{L_{i}\}_{i} $,$\{L'_{j}\}_{j} \in \Fr{L}$) have a common curve, one can show the union of them $\{L_{i},L'_{j}\}_{i,j}$ is in $\Fr{L}$.
We define a partially order $\{L_{i}\}_{i} \leq \{L'_{j}\}_{j}$ if $\{L_{i}\}_{i} \subset \{L'_{j}\}_{j}$. 
Then $(\Fr{L},\leq)$ is a partially order set, so there exist maximal elements $\{L_{1,k_1}\}_{k_{1}}, \cdots ,\{L_{\eta_{p},k_{\eta_{p}}}\}_{k_{\eta_{p}}}$, where $\eta_{p}$ is the number 
of maximal elements of $\Fr{L}$.
It's an abuse of the symbol, but we replace indices $k_{1}, \cdots , k_{\eta_{p}}$ to $k$.
We note that each two $\{L_{1,k}\}_{k}, \cdots ,\{L_{\eta_{p},k}\}_{k}$ have a no common curve.
We put $D_{t} := \sum _{k \geq 1} L_{t,k}$ for $t=1, \cdots , \eta_{p}$.
%

We can describe $\Ti{R}_{v}(p)$ by above $D_{t}$'s as the following.
 $\Ti{R}_{v}(p)$ is decomposed  a disjoint sum consisted of such sum uniquely.
 We denote as
\begin{align}
\label{6/16.1} 
\Ti{R}_{v}(p)= D_{1}+ \cdots  +D_{\eta_{p}}  \quad D_{t} =\sum_{k \geq 1}L_{t,k}.
\end{align}
Let $C_{t,k}$ be the exceptional $(-1)$-curve whose proper transform to $\Ti{P}$ is $L_{t,k}$.
We let $\iota^t (\Ga_{p})$ and $\kappa^t (\Ga_{p})$ denote the numbers of singular points (including infinitely near one) of branch locus
of types $n\Z$ and $n\Z +1$, respectively, at which the proper transforms of two curves from $\{C_{t,k}\}_k$ meet, 
and put $\iota(\Ga_p)=\sum_{t=1}^{\eta_{p}}\iota^t(\Ga_p)$ and 
$\kappa(\Ga_p)=\sum_{t=1}^{\eta_{p}}\kappa^t(\Ga_p)$.

\smallskip

Let $\Ha{\varphi}:\Ha{P}\to B$ be any intermediate ruled surface between $\Ti{P}$ and $P$, and regard $\Ti{\psi}:\Ti{P}\to P$ as the composite of 
the natural birational morphisms $\Ha{\psi}: \Ti{P}\to \Ha{P}$ and $\Ch{\psi}:\Ha{P}\to P$ as $\Ti{\psi}=\Ch{\psi}\circ\Ha{\psi}$:
\begin{align}
\label{intermediate}
  \xymatrix{
    \Ti{P} \ar[r]^{\Ha{\psi}} \ar[dr]_{\Ti{\PHI}} & \Ha{P} \ar[d]^{\Ha{\PHI}} \ar[r]^{\Ch{\psi}}& P \ar[ld]^{\PHI}\\
     & B &
}
\end{align}
We put $\Ha{R}=\Ha{\psi}_*\Ti{R}$.
The fiber of $\Ha{\PHI}$ over $p \in B$ will be denoted by $\Ha{\Ga}_{p}$.
Let $\Ha{\Al}_{k}(\Ga_{p})$ and $\Ch{\Al}_{k}(\Ga_{p})$ be the number
which contributes $\Al_{k}(\Gamma_{p})$
appearing in $\Ha{\psi}$ and $\Ch{\psi}$, respectively. 
We note that the number of singular points of $\Ha{R}$ on $\Ha{\Ga}_{p}$ is counted by $\Ha{\Al}_{k}(\Ga_{p})$.
Note that $\alpha_k(\Ga_p)=\Ha{\Al}_{k}(\Ga_{p})+\Ch{\Al}_{k}(\Ga_{p})$. 
We put $\Ha{\Al}_{k}:=\sum_{p \in B} \Ha{\Al}_{k}(\Ga_{p})$ and $\Ch{\Al}_{k}:=\sum_{p \in B} \Ch{\Al}_{k}(\Ga_{p})$.

\smallskip

Choose and fix $p \in B$ and $\Ha{z} \in \Ha{\Ga}_p$.  
We let $\Ti{R}_v(p)_{\Ha{z}}$ be the biggest subdivisor of $\Ti{R}_v(p)$ contracted to $\Ha{z}$ by $\Ha{\psi}$.
Note that we have $\Ti{R}_v(p)_{\Ha{z}}\neq 0$ only when there is a singular point (of the branch locus) whose multiplicity is in $n\mathbb{Z}+1$ (and $>1$) and which is infinitely near to $\Ha{z}$ 
(including $\Ha{z}$ itself).
Note also that $\Ti{R}_v(p)_{\Ha{z}}$ is a disjoint union of non-singular rational curves each of which is a $(-an)$-curve for some positive integer $a$.
Let $\Ti{\Ga}_{p,\Ha{z}}$ be the biggest subdivisor of $\Ti{\Ga}_{p}$ which is contracted to $\Ha{z}$ by $\Ha{\psi}$.
We put
\begin{equation}\label{eq(2.2)}
\begin{aligned}
\Al_0^{+} (\Ga_{p})_{\Ha{z}} &:=
  \begin{cases}
    \Ti{R}_h \Ti{\Ga}_{p,\Ha{z}}-\Sh\left(\Rm{Supp} (\Ti{R}_{h})\cap \Rm{Supp} (\Ti{\Ga}_{p,\Ha{z}})\right) &({\rm if\;}\Ha{R} {\rm\;is\;singular\;at\;} \Ha{z}\;),\\
    \\
    \left({\rm The\;ramification\;index\;of\;}\Ha{\varphi}|_{\Ha{R}_{h}}:\Ha{R}_{h} \to B{\rm \;at\;}\Ha{z}\right)-1 &({\rm if\;}\Ha{R} {\rm\;is\;smooth\;at\;} \Ha{z}\;),\\
    \\
   0 & ({\rm if\;} \Ha{z} \not\in \Ha{R}),
  \end{cases}\\
\Ha{\Al}_{0}(\Ga_{p})_{\Ha{z}}&:= \Al_{0}^{+}(\Ga_{p})_{\Ha{z}}-2\sharp(\Ti{R}_{0})_{v}(p)_{\Ha{z}}.
\end{aligned}
\end{equation}
We note that 
\[
\Al_{0}^{+}(\Ga_{p})=\sum_{\Ha{z} \in \Ha{\Gamma}_{p}}\Al_{0}^{+}(\Ga_{p})_{\Ha{z}}.
\]
For a positive integer $a$, we denote by $j_{a}(\Ga_{p})_{\Ha{z}}$ 
the number of $(-an)$-curves in $\Ti{R}_{v}(p)_{\Ha{z}}$.
Let $\Ch{j}_{1}(\Ga_{p})$ be the number of irreducible curves of $\Ha{R}_{v}(p)$ whose proper transforms 
on $\Ti{P}$ are $(-n)$ curves. 
Then we have
\begin{equation}\label{eq(2.3)}
j_{1}(\Ga_{p})= \sum_{\Ha{z}\in \Ha{\Ga}_{p}}j_{1}(\Ga_{p})_{\Ha{z}}+\Ch{j}_{1}(\Ga_{p}),\quad
\Al_{0}(\Ga_{p})=\sum_{\Ha{z}\in \Ha{\Ga}_{p}}\Ha{\Al}_{0}(\Ga_{p})_{\Ha{z}}-2\left(\sharp\Ha{R}_{v}(p)-\Ch{j}_{1}(\Ga_{p})\right).
\end{equation}

\smallskip

\begin{lem}[\cite{Eno} Lemma~5.2]
\label{lem5.2}
The following hold:

\smallskip

$(1)$  $\iota (\Ga_{p})=j(\Ga_{p})-\eta_{p}$.

\smallskip

$(2)$ $\Al_{0}^{+}(\Ga_{p})\geq (n-2)(j(\Ga_{p})-\eta_{p}+2\kappa (\Ga_{p}))$.

\smallskip

$(3)$ $\displaystyle{\sum_{k \geq 1}\Al_{k}(\Ga_{p})\geq \sum _{a\geq 1}(an-2)j_a (\Ga_{p})+2\eta_{p}-\kappa (\Ga_{p})}$.
\end{lem}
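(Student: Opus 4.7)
All three inequalities decompose as sums over the components $D_1,\ldots,D_{\eta_p}$ of $\Ti R_v(p)$ in (\ref{6/16.1}), so the proof reduces to a local analysis of a single tree $D_t$ combined with a global bookkeeping.

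For (1), I would track the number of \emph{sibling intersections} (points where two proper transforms of members of $\{C_{t,k}\}_k$ meet) as the surface is blown up from $P$ up to $\Ti P$. Since $\Ti R$ is smooth on $\Ti P$, every sibling intersection must eventually be resolved. Each blow-up that creates a new tree curve $L_{t,k}$ (there are $j(D_t)-1$ such events, one for each $k\ge 2$) adds a net $+1$ to the count of sibling intersections, while each type-$n\Z$ blow-up at an existing sibling intersection resolves one without creating a new tree curve. The resulting balance yields $\iota^t(\Ga_p)=j(D_t)-1$, and summation over $t$ gives (1). The points of type $n\Z+1$ counted by $\kappa^t(\Ga_p)$ are precisely those where the creation of a new $L_{t,k}$ occurs at an already existing sibling intersection, rather than at a smooth point of the previously built tree.

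For (2) and (3) I would attach local contributions to each tree. For (2): at an $n\Z$-type sibling intersection the blow-up multiplicity is $an$; two of the $an$ branches of $R$ through the centre are accounted for by the meeting pair of $C_{t,k}$'s, leaving at least $n-2$ horizontal branches whose resolution contributes $\ge n-2$ to $\Al_0^+(\Ga_p)$. An $n\Z+1$-type sibling intersection creates a new tree curve and demands an extra round of resolution, doubling the contribution to $\ge 2(n-2)$. Summing these lower bounds and invoking (1) gives (2). For (3) I would use adjunction on each $(-an)$-curve $L_{t,k}\subset\Ti R_v(p)$: since $L_{t,k}^2=-an$ and $L_{t,k}$ is a smooth rational curve, the blow-up chain producing its image contributes $an-2$ to $\sum_{k\ge1}\Al_k(\Ga_p)$; a boundary term of $+2$ per tree comes from the root's intersection with $\Ti\Ga_p$ or with the initial $(-1)$-curve starting the family, and a correction of $-\kappa^t(\Ga_p)$ removes the double-counting at $n\Z+1$ sibling intersections (where the creation blow-up has already been accounted for by the new curve). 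Summing the per-tree bound $\sum_a(an-2)j_a(D_t)+2-\kappa^t(\Ga_p)$ over $t$ and using (1) produces the stated inequality.

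\textbf{The main obstacle} is the local accounting in (2) and (3): showing that each $n\Z$ (resp.\ $n\Z+1$) sibling intersection really supplies $n-2$ (resp.\ $2(n-2)$) units of $\Al_0^+$, and that the $+2\eta_p-\kappa(\Ga_p)$ correction in (3) captures exactly the interplay between the tree boundaries and sibling intersections reusing existing nodes. Both rest on a careful case analysis using Lemma~\ref{lem1.2}, which restricts blow-up multiplicities to $n\Z\cup(n\Z+1)$. Once these local estimates are in place, (1) reduces to a clean combinatorial balance and the final sums assemble routinely.
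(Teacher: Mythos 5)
First, note that the paper does not prove this lemma at all: it is quoted verbatim from \cite{Eno} (Lemma~5.2 there), so there is no in-paper argument to compare yours against. Judged on its own terms, your proposal gets part (1) essentially right: the bookkeeping in which every creation of a tree curve (necessarily a blow-up of type $n\Z+1$, by Lemma~\ref{lem1.2}(1)) changes the number of sibling intersections by $+1$, every type-$n\Z$ blow-up at a sibling intersection changes it by $-1$, and the count starts and ends at $0$ because the components of $\Ti{R}_v(p)$ are disjoint on $\Ti{P}$, does give $\iota^t(\Ga_p)=j(D_t)-1$ and hence (1) after summing over $t$. Your structural reading of (3) is also broadly correct, although the provenance of the ``$+2$ per tree'' needs more care than ``the root's intersection with $\Ti{\Ga}_p$'': when the root $E_1$ is a $(-1)$-curve rather than the fiber, one unit of the $+2$ has to come from the singular point of type $n\Z+1$ that created $E_1$ itself, a point lying on no curve of the tree; without it your per-tree count only yields $+1$.

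The genuine gap is in (2), and it is exactly the step you flag as the main obstacle, so you should not regard it as settled. At a sibling intersection $\Ov{z}$ of type $n\Z+1$ and multiplicity $an+1$, the two meeting vertical branches account for $2$, so the horizontal multiplicity through $\Ov{z}$ is $an-1$; since the fiber $\Ti{\Ga}_p$ has multiplicity at least $2$ along the exceptional locus over $\Ov{z}$, the direct concentration estimate gives a contribution of at least $an-1$ to $\Al_0^{+}(\Ga_p)$. For $a=1$ and $n\geq 4$ this is $n-1<2(n-2)$, so the asserted ``doubling'' does not follow from the local picture at $\Ov{z}$ alone. The missing $n-3$ units must be extracted from the new branch curve $E_{\Ov{z}}\subset R$ created by this blow-up: it starts as a $(-1)$-curve and must end as a $(-bn)$-curve, so it carries further singular points of the branch locus, each of multiplicity at least $n$ with at least $n-2$ horizontal branches concentrated at a point of fiber multiplicity $\geq 2$; these supply the remaining contribution, but one must then check that they are not simultaneously being charged to the $\iota$- or $\kappa$-points of the same tree. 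Until that accounting is done, (2) is not proved, and since Lemma~\ref{10/18.1} and everything downstream of it rests on (2), this is not a detail you can leave as an obstacle.
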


\begin{lem}
\label{10/18.1}
Assume that $n \geq 3$.
Then it hold that 
\begin{align*}
\alpha_{0}(\Ga_{p})+B_{n}\sum_{k \geq 1}\alpha_{k}(\Ga_{p})-nB_{n} \geq 0,
\end{align*}
where $B_{n}=2/n$ if $n \geq 4$ and $B_{3}=1/2$.
\end{lem}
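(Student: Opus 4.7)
The plan is to combine parts (2) and (3) of Lemma~\ref{lem5.2} linearly with weights $1$ and $B_n$ respectively, use the definition of $\alpha_0(\Ga_p)$ to express it in terms of $\alpha_0^+(\Ga_p)$ and the count of non-$(-n)$-curves, and finally invoke part (1) of Lemma~\ref{lem5.2} to absorb the (possibly negative) contribution of $\eta_p$. The choice of the specific constants $B_n=2/n$ for $n\geq 4$ and $B_3=1/2$ is then forced by the requirement that the final linear combination be non-negative with the correct constant $nB_n$ on the right-hand side.

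First I would note that, since every $\Ti{\varphi}$-vertical component of $\Ti{R}$ is a $(-an)$-curve for some $a\geq 1$ (Remark~\ref{basic}), the non-$(-n)$-curve vertical components are exactly counted by $\sum_{a\geq 2}j_a(\Ga_p)$; that is, $\sharp(\Ti{R}_0)_v(p)=\sum_{a\geq 2}j_a(\Ga_p)$. Substituting this into the definition $\alpha_0(\Ga_p)=\alpha_0^+(\Ga_p)-2\sharp(\Ti{R}_0)_v(p)$ and applying Lemma~\ref{lem5.2}(2),(3) yields, after grouping terms by type,
\begin{align*}
\alpha_0(\Ga_p)+B_n\sum_{k\geq 1}\alpha_k(\Ga_p)
&\geq (n-2)(1+B_n)\,j_1(\Ga_p)+\sum_{a\geq 2}\bigl((n-4)+B_n(an-2)\bigr)\,j_a(\Ga_p)\\
&\qquad+\bigl(2B_n-(n-2)\bigr)\eta_p+\bigl(2(n-2)-B_n\bigr)\kappa(\Ga_p).
\end{align*}
Then I would use Lemma~\ref{lem5.2}(1), which gives $j(\Ga_p)=\eta_p+\iota(\Ga_p)\geq \eta_p$, i.e.\ $j_1(\Ga_p)\geq \eta_p-\sum_{a\geq 2}j_a(\Ga_p)$, to replace $j_1(\Ga_p)$ on the right. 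A direct calculation shows that with $B_n=2/n$ ($n\geq 4$) the combined coefficient of $\eta_p$ becomes exactly $2=nB_n$, while the coefficients of $j_a(\Ga_p)$ $(a\geq 2)$ simplify to $2(a-2)\geq 0$ and that of $\kappa(\Ga_p)$ remains positive; similarly, the choice $B_3=1/2$ is precisely tailored so that the coefficient of $\eta_p$ in the first display vanishes, leaving coefficients that dominate $3/2=3B_3$.

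The main obstacle will be the careful bookkeeping in the algebraic step above, in particular verifying that the substitution of $j_1(\Ga_p)\geq \eta_p-\sum_{a\geq 2}j_a(\Ga_p)$ does not introduce any sign-ambiguities in the coefficients of the remaining variables, and ruling out the degenerate situation in which $\eta_p=0$. In the latter case all vertical contributions vanish and the inequality reduces to a claim about horizontal singular contributions on $\Ga_p$, which can be handled by a short direct argument using the definition of $\alpha_0^+(\Ga_p)$ together with the allowed multiplicities from Lemma~\ref{lem1.2}.
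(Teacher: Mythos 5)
Your reduction is exactly the paper's: combining Lemma~\ref{lem5.2}(2),(3) with weights $1$ and $B_n$ and using $\sharp(\Ti{R}_0)_v(p)=\sum_{a\geq2}j_a(\Ga_p)$ gives precisely the intermediate display you wrote, with the same coefficients as in the paper. The problem is the last step. First, your substitution $j_1(\Ga_p)\geq\eta_p-\sum_{a\geq2}j_a(\Ga_p)$ turns the coefficient of $j_a(\Ga_p)$ into $-2+(a-1)nB_n$, which for $n=3$, $B_3=1/2$, $a=2$ equals $-1/2<0$, so the substitution does not even preserve non-negativity of all coefficients in that case. Second, and more seriously, both of your endgames (lower bound $nB_n\eta_p$ for $n\geq4$, lower bound $\tfrac32 j_1(\Ga_p)$ for $n=3$) require $\eta_p\geq1$ (resp.\ $j_1(\Ga_p)\geq1$), and the deferred case $\eta_p=0$ cannot be closed by any ``short direct argument'': if $R$ is smooth along $\Ga_p$ and meets $\Ga_p$ transversally (or with one simple tangency), then $\alpha_0(\Ga_p)\in\{0,1\}$ and $\alpha_k(\Ga_p)=0$ for all $k\geq1$, so the left-hand side of the printed inequality is $-nB_n$ or $1-nB_n$, which is negative. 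The statement as printed is simply false for such fibers; comparing with the proof of Lemma~\ref{lowerbdofK}, where this lemma is invoked in the form $\alpha_0(\Ga_p)-nB_nj_1(\Ga_p)\geq-B_n\sum_{k\geq1}\alpha_k(\Ga_p)$, one sees the intended inequality has $nB_nj_1(\Ga_p)$ in place of the constant $nB_n$. Your substitution discards exactly this term, and the bound $nB_n\eta_p$ does not dominate $nB_nj_1(\Ga_p)$ in general, since a single tree $D_t$ may contain several $(-n)$-curves, so $j_1(\Ga_p)>\eta_p$ is possible.

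The fix is the paper's regrouping: do not substitute for $j_1(\Ga_p)$, but write its coefficient $(n-2)(1+B_n)$ as $nB_n+(n-2-2B_n)$ and absorb the excess into the non-negative quantity $\iota(\Ga_p)=j(\Ga_p)-\eta_p$ from Lemma~\ref{lem5.2}(1), obtaining
\begin{align*}
\alpha_0(\Ga_p)+B_n\sum_{k\geq1}\alpha_k(\Ga_p)
\geq \sum_{a\geq2}\bigl(-2+anB_n\bigr)j_a(\Ga_p)+\bigl(n-2-2B_n\bigr)\iota(\Ga_p)
+\bigl(2(n-2)-B_n\bigr)\kappa(\Ga_p)+nB_n\,j_1(\Ga_p),
\end{align*}
in which all coefficients are non-negative for $B_n=2/n$ ($n\geq4$) and $B_3=1/2$. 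This yields the inequality in the form actually used later, namely with $-nB_nj_1(\Ga_p)$ on the left; with that correction to the statement, the rest of your argument goes through.
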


\begin{proof}
From 
$
\Al_{0}(\Ga_{p}) = \Al_{0}^{+}(\Ga_{p})-2\sharp(\Ti{R}_{0})_{v}(p)= \Al_{0}^{+}(\Ga_{p})-2\sum_{a\geq2} j_{a}(\Ga_{p})
$
and Lemma~\ref{lem5.2} (2), 
we have 
\begin{align*}
\Al_{0}(\Ga_{p}) &\geq (n-2)(j(\Ga_{p})-\eta_{p}+2\kappa(\Ga_{p}))
-2\sum_{a\geq2} j_{a}(\Ga_{p})\\
 &=(n-4)\sum_{a\geq2} j_{a}(\Ga_{p})-(n-2)\eta_{p}+
 2(n-2)\kappa(\Ga_{p})+(n-2) j_{1}(\Ga_{p}).
\end{align*}
By Lemma~\ref{lem5.2} (3), 
we have
\begin{align*} 
B_{n} \sum_{k \geq 1}\Al_{k}(\Ga_{p})\geq \sum _{a\geq 1}B_{n}(an-2)j_a (\Ga_{p})
+2B_{n}\eta_{p}-B_{n}\kappa (\Ga_{p}).
\end{align*}
Therefore we get 
\begin{align*}
 &\Al_{0}(\Ga_{p}) + B_{n}\sum_{k \geq 1}\Al_{k}(\Ga_{p})\\
 &\geq \sum _{a\geq 2} \left((n-4)+B_{n}(an-2) \right)j_a (\Ga_{p})
 +\left(2B_{n}-(n-2) \right)\eta_{p}\\
 &+\left(2(n-2) - B_{n}\right)\kappa (\Ga_{p})
 +\left( (n-2) + B_{n}(n-2)\right)  j_1 (\Ga_{p})\\
 &=  \sum _{a\geq 2}  \left(-2+anB_{n} \right) j_a (\Ga_{p})
 +\left( n-2-2B_{n}\right)( j (\Ga_{p})-\eta_{p})\\
 &+\left(2(n-2) - B_{n}\right)\kappa (\Ga_{p})
 +n B_{n} j_1 (\Ga_{p}).
  \end{align*}
 By $B_{n}=2/n$ if $n \geq 4$ and $B_{3}=1/2$,
 the coefficients of $j_a (\Ga_{p})$, $ j (\Ga_{p})-\eta_{p}=\iota (\Ga_{p})$ 
 and $\kappa (\Ga_{p})$ are all non-negative.
 \end{proof}
We discuss the localization of the invariant $K_{f}^2$.
We recall the following lemma.

\begin{lem}[\cite{Eno}]
\label{lem1.7}
Let $f:S\to B$ be a primitive cyclic covering fibration of type $(g,0,n)$.
Then it holds 
\begin{align*}
K_{f}^2=\frac{n-1}{r-1}\biggl(\frac{(n-1)r-2n}{n}(\Al_{0}-2j_{1})+(n+1)\sum_{k\geq 1}k(r-nk)\Al_{k} \biggr)
-n\sum_{k\geq 1}\Al_{k} + j_{1}.
\end{align*}
\end{lem}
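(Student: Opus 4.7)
The plan is to compute $K_{\tilde f}^2$ via the cyclic covering formula on $\PT$, then track it through the tower of blow-ups $\tilde\psi:\PT\to P$, and finally compare $K_{\tilde f}^2$ with $K_f^2$ by accounting for the $(-1)$-curves contracted by the birational morphism $\ST\to S$.

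First, since $\tilde\theta:\ST\to\PT$ is a classical $n$-cyclic cover branched along the smooth divisor $\RT\in|n\DT|$, the standard formula for cyclic covers gives $K_{\ST}=\tilde\theta^{\ast}(K_{\PT}+(n-1)\DT)$, and passing to relative canonicals over $B$ yields $K_{\tilde f}=\tilde\theta^{\ast}(K_{\tilde\varphi}+(n-1)\DT)$, so that
\[
K_{\tilde f}^2=n\bigl(K_{\tilde\varphi}+(n-1)\DT\bigr)^2.
\]
Next, I would expand the right-hand side on $\PT$ by pulling back step by step through each $\psi_i:P_i\to P_{i-1}$ using $K_{P_i}=\psi_i^{\ast}K_{P_{i-1}}+E_i$ together with $\Fr{d}_i=\psi_i^{\ast}\Fr{d}_{i-1}-[m_i/n]E_i$ from Lemma~\ref{lem1.2}. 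Squaring at each stage picks up a correction $-(1-(n-1)[m_i/n])^2$, and grouping the $x_i$ by multiplicity class --- since both $m_i=kn$ and $m_i=kn+1$ give $[m_i/n]=k$ --- collects all corrections into a sum $\sum_k(\text{polynomial in }k)\alpha_k$ indexed by the singularity indices.

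To produce the term $\alpha_0$, I would reinterpret $(K_{\tilde\varphi}+(n-1)\DT)^2$ globally on $\PT$ using $\RT\sim n\DT$. Writing $\DT$ in terms of $\RT_0$ together with the $(-n)$-curves $A$ in $\RT$, and invoking $\DT\cdot\Ti{\Gamma}=r/n$ together with the Hurwitz identity $r=2(g+n-1)/(n-1)$, the leading part of the expansion rearranges into a multiple of $(K_{\tilde\varphi}+\RT_0)\RT_0=\alpha_0$; the subtraction of $A$ produces exactly the $j_1$-corrections, because each $(-n)$-curve $C$ in $\RT_v$ has $(K_{\tilde\varphi}+C)C=-2$ by adjunction, and the difference from its contribution to $(K_{\tilde\varphi}+\RT_0)C$ is controlled.

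Finally, by the first item of Remark~\ref{basic}, $\ST\to S$ is the minimal succession of blow-ups resolving the isolated fixed points of the covering transformation $\sigma$, so $K_f^2=K_{\tilde f}^2+N'$ where $N'$ counts the contracted $(-1)$-curves. These $(-1)$-curves correspond to odd-type singularities of the branch locus and to the $(-n)$-curves of $\RT_v$, whence $N'$ also expresses as an integer combination of the $\alpha_k$ with $k\geq 1$ and $j_1$. Combining all three contributions and simplifying via the Hurwitz substitution for $r$ collapses the result into the stated closed form. I expect the main obstacle to be the bookkeeping: one must collect contributions from three separate sources --- the cover formula on $\PT$, the blow-up tower $\PT\to P$, and the contractions $\ST\to S$ --- and verify that the coefficients assemble into exactly the displayed expression, with no residual cross terms between $\alpha_0-2j_1$ and $\sum_{k\geq 1}k(r-nk)\alpha_k$.
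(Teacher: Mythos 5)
The paper does not actually prove this lemma --- it is quoted from Enokizono \cite{Eno} --- and your outline reconstructs essentially that argument: $K_{\tilde f}^2=n(K_{\tilde\varphi}+(n-1)\DT)^2$, blow-up bookkeeping via $\Fr{d}_i=\psi_i^{\ast}\Fr{d}_{i-1}-[m_i/n]E_i$, and the count of contractions $\ST\to S$. Two steps that your sketch leaves implicit are where the formula actually comes from. First, the denominator $r-1$ does not come from the Hurwitz substitution but from eliminating the unknown $\Fr{d}_0^2$: since $P$ is geometrically ruled its numerical classes form a rank-two lattice, so $\Fr{d}_0$ is pinned down by $\Fr{d}_0\Gamma=r/n$ together with $(K_{\varphi}+R)R$, and a direct computation gives $(K_{\varphi}+(n-1)\Fr{d}_0)^2=\frac{(n-1)((n-1)r-2n)}{n^2(r-1)}(K_{\varphi}+R)R$; transporting $(K_{\varphi}+R)R$ up the tower (picking up $+nk(nk-1)$ per singular point with $[m_i/n]=k$) and applying adjunction to the disjoint $(-n)$-curves $A$ yields $(K_{\tilde\varphi}+\RT)\RT=\Al_0-2j_1$, which is exactly the grouping in the statement. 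Second, the correction $K_f^2-K_{\tilde f}^2$ is not a general integer combination of the $\Al_k$ and $j_1$: the contracted $(-1)$-curves are precisely the $\tilde\theta$-preimages of the $(-n)$-curves of $\RT_v$, so the correction equals $j_1$ on the nose, and one then checks that the $\Al_k$-contributions from the two blow-up corrections ($-n((n-1)k-1)^2$ from squaring $K_{\tilde\varphi}+(n-1)\DT$, and the $nk(nk-1)$ inside $(K_{\varphi}+R)R$) combine with $(n+1)k(r-nk)$ to leave exactly the displayed coefficients. With those two computations made explicit, your plan closes and agrees with the cited source.
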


So if we put
\begin{equation}
\label{localinv}
\begin{aligned}
K_{f}^{2}(\Ga_{p})&:=\frac{n-1}{r-1}\biggl(\frac{(n-1)r-2n}{n}(\Al_{0}(\Ga_p)-2j_{1}(\Ga_p))+(n+1)\sum_{k\geq 1}k(r-nk)\Al_{k}(\Ga_p) \biggr)\\
&-n\sum_{k\geq 1}\Al_{k}(\Ga_p) + j_{1}(\Ga_p)
\end{aligned}
\end{equation}
for $p \in B$, then we get $K_{f}^2 = \sum_{p \in B}K_{f}^{2}(\Ga_{p})$.
Since we have $K_{f}^{2}(\Ga_{p})=0$ except for a finite number of points $p\in B$, we see that the invariant $K_f^2$ has been localized 
to a finite number of fibers.
We will further localize a part of $K_{f}^{2}(\Ga_{p})$ to points on the fiber $\Ha{\Ga}_{p}$ in the diagram (\ref{intermediate}). 

For a fiber $\Ha{\Ga}_p$ and a point $\Ha{z} \in \Ha{\Ga}_{p}$, we put 
\begin{equation}
\label{localinv2}
\begin{aligned}
K_{f}^{2}(\Ga_{p})_{\Ha{z}}:=&\frac{n-1}{r-1}\frac{(n-1)r-2n}{n}\left(\Ha{\Al}_{0}(\Ga_{p})_{\Ha{z}}-2j_{1}(\Ga_{p})_{\Ha{z}}\right)+j_{1}(\Ga_{p})_{\Ha{z}}\\
+&\frac{1}{r-1}\sum_{k\geq 1}\bigl(  (n^2-1)k(r-nk)-(r-1)n\bigr)\Ha{\Al}_{k}(\Ga_{p})_{\Ha{z}}. 
\end{aligned}
\end{equation}
Then we get the following:

\begin{prop}
\label{loc'nofK}
Let $f:S\to B$ be a primitive cyclic covering fibration of type $(g,0,n)$.
Let $\Ti{\psi}=\Ch{\psi}\circ\Ha{\psi}$ be an arbitrary decomposition of $\Ti{\psi}:\Ti{P} \to P$ to have the commutative diagram
\[
  \xymatrix{
    \Ti{P} \ar[r]^{\Ha{\psi}} \ar[dr]_{\Ti{\PHI}} & \Ha{P} \ar[d]^{\Ha{\PHI}} \ar[r]^{\Ch{\psi}}& P \ar[ld]^{\PHI}\\
     & B. &
}
\]
Denote a fiber of $\Ha{\PHI}$ over $p \in B$ by $\Ha{\Ga}_{p}$.
Then it holds 
\begin{align*}
K_{f}^{2}(\Ga_{p})&=\sum_{\Ha{z} \in \Ha{\Ga}_{p}} K_{f}^{2}(\Ga_{p})_{\Ha{z}}
 -2\frac{n-1}{r-1}\frac{(n-1)r-2n}{n}\sharp\Ha{R}_{v}(p)+\Ch{j}_{1}(\Ga_{p})\\
 &+\frac{1}{r-1}\sum_{k\geq 1}\bigl(  (n^2-1)k(r-nk)-(r-1)n\bigr)\Ch{\Al}_{k}(\Ga_{p}).
\end{align*}
\end{prop}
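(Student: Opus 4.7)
The plan is a direct algebraic bookkeeping, with no new geometric input beyond the relations already collected in \eqref{eq(2.3)}. First I would rewrite \eqref{localinv} by grouping terms, giving
$K_{f}^{2}(\Ga_{p}) = C\bigl(\Al_{0}(\Ga_{p})-2j_{1}(\Ga_{p})\bigr) + j_{1}(\Ga_{p}) + \sum_{k\geq 1}C_{k}\,\Al_{k}(\Ga_{p})$,
where I introduce the abbreviations $C := \frac{n-1}{r-1}\cdot\frac{(n-1)r-2n}{n}$ and $C_{k} := \frac{(n^{2}-1)k(r-nk)-(r-1)n}{r-1}$; the coefficient identification $\frac{(n-1)(n+1)k(r-nk)}{r-1}-n = C_{k}$ is immediate. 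By design, these are exactly the coefficients in the definition \eqref{localinv2} of $K_{f}^{2}(\Ga_{p})_{\Ha{z}}$, so the local quantity reads $K_{f}^{2}(\Ga_{p})_{\Ha{z}} = C\bigl(\Ha{\Al}_{0}(\Ga_{p})_{\Ha{z}}-2j_{1}(\Ga_{p})_{\Ha{z}}\bigr) + j_{1}(\Ga_{p})_{\Ha{z}} + \sum_{k\geq 1}C_{k}\,\Ha{\Al}_{k}(\Ga_{p})_{\Ha{z}}$.

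Next I would sum over $\Ha{z}\in\Ha{\Ga}_{p}$ and invoke three identities. For $k\geq 1$ one has $\sum_{\Ha{z}}\Ha{\Al}_{k}(\Ga_{p})_{\Ha{z}} = \Ha{\Al}_{k}(\Ga_{p})$, since every singular point of $\Ha{R}$ on $\Ha{\Ga}_{p}$ lies above exactly one $\Ha{z}$. The remaining two identities are precisely the two parts of \eqref{eq(2.3)}, namely $\sum_{\Ha{z}}j_{1}(\Ga_{p})_{\Ha{z}} = j_{1}(\Ga_{p})-\Ch{j}_{1}(\Ga_{p})$ and, after rearrangement, $\sum_{\Ha{z}}\Ha{\Al}_{0}(\Ga_{p})_{\Ha{z}} = \Al_{0}(\Ga_{p}) + 2\sharp\Ha{R}_{v}(p) - 2\Ch{j}_{1}(\Ga_{p})$. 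Substituting these into $\sum_{\Ha{z}}K_{f}^{2}(\Ga_{p})_{\Ha{z}}$ produces an expression in $\Al_{0}(\Ga_{p}), j_{1}(\Ga_{p}), \Ha{\Al}_{k}(\Ga_{p}), \sharp\Ha{R}_{v}(p)$, and $\Ch{j}_{1}(\Ga_{p})$.

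To close, I would apply $\Al_{k}(\Ga_{p}) = \Ha{\Al}_{k}(\Ga_{p}) + \Ch{\Al}_{k}(\Ga_{p})$ to rewrite $\sum_{k}C_{k}\,\Ha{\Al}_{k}(\Ga_{p})$ as $\sum_{k}C_{k}\,\Al_{k}(\Ga_{p}) - \sum_{k}C_{k}\,\Ch{\Al}_{k}(\Ga_{p})$; then the combination $C\bigl(\Al_{0}(\Ga_{p})-2j_{1}(\Ga_{p})\bigr) + j_{1}(\Ga_{p}) + \sum_{k}C_{k}\,\Al_{k}(\Ga_{p})$ reassembles into $K_{f}^{2}(\Ga_{p})$ and the remaining terms match the right-hand side of the proposition after transposition. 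The one spot that demands care is the $\Ch{j}_{1}(\Ga_{p})$ accounting: the contribution $-2C\,\Ch{j}_{1}(\Ga_{p})$ from the $\Ha{\Al}_{0}$ identity cancels the $+2C\,\Ch{j}_{1}(\Ga_{p})$ produced by the $-2j_{1}$ piece, so only the $-\Ch{j}_{1}(\Ga_{p})$ coming from the standalone $j_{1}$ term survives; transposing it yields the $+\Ch{j}_{1}(\Ga_{p})$ of the claim. This sign tracking is the only nontrivial obstacle; everything else is mechanical substitution.
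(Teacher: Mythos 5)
Your proposal is correct and follows essentially the same route as the paper's proof: a direct substitution of the decomposition identities \eqref{eq(2.3)} (together with $\Al_{k}(\Ga_{p})=\Ha{\Al}_{k}(\Ga_{p})+\Ch{\Al}_{k}(\Ga_{p})$) into the definitions \eqref{localinv} and \eqref{localinv2}, with the $\Ch{j}_{1}(\Ga_{p})$ cancellation tracked exactly as the paper does. The only difference is the (immaterial) direction of the bookkeeping — you expand $\sum_{\Ha{z}}K_{f}^{2}(\Ga_{p})_{\Ha{z}}$ and compare, while the paper expands $K_{f}^{2}(\Ga_{p})$ — so nothing further is needed.
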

\begin{proof}
By (\ref{localinv}), (\ref{eq(2.3)}), we get 
\begin{align*}
K_{f}^{2}(\Ga_{p})=&\frac{n-1}{r-1}\frac{(n-1)r-2n}{n}\left(\Al_{0}(\Ga_p)-2j_{1}(\Ga_p)\right)+ j_{1}(\Ga_p)\\
 &+\frac{1}{r-1}\sum_{k \geq 1} \left( (n^2-1)k(r-nk)-(r-1)n \right)\Al_{k}(\Ga_{p})\\
 =&\frac{n-1}{r-1}\frac{(n-1)r-2n}{n}\left(\sum_{\Ha{z}\in \Ha{\Ga}_{p}}\Ha{\Al}_{0}(\Ga_{p})_{\Ha{z}}-2\left(\sharp\Ha{R}_{v}(p)-\Ch{j}_{1}(\Ga_{p})\right)
-2\sum_{\Ha{z}\in \Ha{\Ga}_{p}} j_1 (\Ga_{p})_{\Ha{z}} -2\Ch{j}_{1}(\Ga_p)\right)\\
 &+\sum_{\Ha{z}\in \Ha{\Ga}_{p}} j_1 (\Ga_{p})_{\Ha{z}} +\Ch{j}_{1}(\Ga_p)\\
 &+\frac{1}{r-1}\sum_{k \geq 1} \left( (n^2-1)k(r-nk)-(r-1)n \right)\left(\sum_{\Ha{z}\in \Ha{\Ga}_{p}}\Ha{\Al}_{k}(\Ga_{p})_{\Ha{z}}+\Ch{\Al}_{k} (\Ga_{p})\right)\\
%
 %
 =&\sum_{\Ha{z} \in \Ha{\Ga}_{p}} K_{f}^{2}(\Ga_{p})_{\Ha{z}}
 -2\frac{n-1}{r-1}\frac{(n-1)r-2n}{n}\sharp\Ha{R}_{v}(p)+\Ch{j}_{1}(\Ga_{p})\\
 &+\frac{1}{r-1}\sum_{k\geq 1}\bigl(  (n^2-1)k(r-nk)-(r-1)n\bigr)\Ch{\Al}_{k}(\Ga_{p})
\end{align*}
as wished.
\end{proof}

\begin{lem}
\label{lowerbdofK}
Assume that $n \geq 3$.
Then the following hold:
\begin{align*}
 (r-1)K_{f}^{2}(\Ga_{p})\geq & \sum_{k\geq 1}\biggl( -\frac{n-1}{n}\bigr((n-1)r-2n \bigl)B_{n}
+(n^2 -1)k(r-nk)-n(r-1) \biggr)\Al_{k}(\Ga_{p})
\end{align*}
In particular, $K_{f}^{2}(\Ga_{p})$ is non-negative.
\end{lem}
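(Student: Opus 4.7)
The plan is: expand $(r-1)K_f^2(\Ga_p)$ by distributing, use Lemma~\ref{10/18.1} to eliminate $\Al_0(\Ga_p)$, verify the coefficient of the remaining $j_1(\Ga_p)$ term is non-negative, and finally check the coefficient of each $\Al_k(\Ga_p)$ in the resulting bound.

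For step one, multiplying the defining expression (\ref{localinv}) by $r-1$ and using $(n-1)(n+1)=n^2-1$ gives
\begin{align*}
(r-1)K_f^2(\Ga_p) &= a\bigl(\Al_0(\Ga_p)-2j_1(\Ga_p)\bigr) + (r-1)j_1(\Ga_p) \\
&\quad + \sum_{k\geq1}\bigl((n^2-1)k(r-nk)-n(r-1)\bigr)\Al_k(\Ga_p),
\end{align*}
where $a:=(n-1)\bigl((n-1)r-2n\bigr)/n$. Since $r=2(g+n-1)/(n-1)\geq 2n/(n-1)$ for $g\geq 2$, one has $a\geq 0$.

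For step two I would note that the proof of Lemma~\ref{10/18.1} actually establishes the slightly stronger inequality $\Al_0(\Ga_p)+B_n\sum_{k\geq1}\Al_k(\Ga_p)\geq nB_n\,j_1(\Ga_p)$, since the final display in that proof contains the non-negative term $nB_n\,j_1(\Ga_p)$. Substituting this into the expression above and collecting terms yields
\begin{align*}
(r-1)K_f^2(\Ga_p) \geq \sum_{k\geq1}\bigl(-aB_n+(n^2-1)k(r-nk)-n(r-1)\bigr)\Al_k(\Ga_p) + \bigl(r-1+a(nB_n-2)\bigr)j_1(\Ga_p).
\end{align*}
A direct computation in the two cases of $B_n$ shows the $j_1(\Ga_p)$ coefficient equals $r-1$ when $n\geq 4$ (as $nB_n=2$) and $(r+3)/3$ when $n=3$ (using $nB_3=3/2$ and $a=(4r-12)/3$); both are positive, so the $j_1(\Ga_p)$ term may be dropped. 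Rewriting $aB_n=\frac{n-1}{n}\bigl((n-1)r-2n\bigr)B_n$ then matches the displayed inequality.

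For the final assertion $K_f^2(\Ga_p)\geq 0$, I would check that each coefficient $(n^2-1)k(r-nk)-n(r-1)-aB_n$ is non-negative for the range of $k$ with $\Al_k(\Ga_p)>0$. A singularity of multiplicity $kn$ or $kn+1$ on $R$ forces $kn\leq r$, and when the point is horizontal Lemma~\ref{up.bd.of.mult} strengthens this to $kn\leq r/2$; a vertical component of $R$ can contribute at most one extra unit to the multiplicity at a crossing. Since $(n^2-1)k(r-nk)$ is concave in $k$, its minimum on this range occurs at the endpoints. I expect the main obstacle to be the endpoint $k=1$ for small $r$, and the separate treatment of $n=3$ (where $B_3=1/2$ differs from $2/n$); a routine but delicate arithmetic check using the explicit form of $a$ and the assumption $n\geq 3$ should suffice.
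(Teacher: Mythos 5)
Your derivation of the displayed inequality is exactly the paper's argument: the paper likewise substitutes the inequality $\Al_{0}(\Ga_{p})-nB_{n}j_{1}(\Ga_{p})\geq -B_{n}\sum_{k\geq1}\Al_{k}(\Ga_{p})$ (i.e.\ the $j_1$-corrected form of Lemma~\ref{10/18.1} that you rightly extracted from that lemma's proof, its printed statement having dropped the factor $j_1(\Ga_p)$) into (\ref{localinv}) and discards the non-negative $j_1$ term, and your coefficient computations ($r-1$ for $n\geq4$, $(r+3)/3$ for $n=3$) check out. For the closing assertion that $K_f^2(\Ga_p)\geq 0$ the paper offers no more detail than you do; your plan of verifying non-negativity of the $k=1$ coefficient (which does hold once $r\geq 3n$, the standing assumption in the sections where the lemma is used) is the natural completion.
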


\begin{proof}
From  Lemma~\ref{10/18.1}, we have
\begin{align*}
\Al_{0}(\Ga_{p})-nB_{n}j_{1}(\Ga_{p})\geq -B_{n}\sum_{k \geq 1}\Al_{k}(\Ga_{p}).
\end{align*}
Then plugging it to (\ref{localinv}), we obtain the desired inequality 
\begin{align*}
 (r-1)K_{f}^{2}(\Ga_{p})\geq & \sum_{k\geq 1}\biggl( -\frac{n-1}{n}\bigr((n-1)r-2n \bigl)B_{n}
+(n^2 -1)k(r-nk)-n(r-1) \biggr)\alpha_{k}(\Ga_{p}).
\end{align*}
\end{proof}

\begin{lem}
\label{10/18.2}
Let the notation and the assumption as above.
Then it holds that  
\begin{align*}
 (r-1)K_{f}^{2}(\Ga_{p})_{\Ha{z}} 
& \geq   \frac{n-1}{n}\left((n-1)r-2n \right){\Al}_{0}^{+}(\Gamma_{p})_{\Ha{z}}+(r-1)j_{1}(\Gamma_{p})_{\Ha{z}}  \\
& + \sum_{k\geq 1}\biggl( -2\frac{n-1}{n}\bigr((n-1)r-2n \bigl)
+(n^2 -1)k(r-nk)-n(r-1) \biggr)\Ha{\Al}_{k}(\Ga_{p})_{\Ha{z}} \\
  \end{align*}
\end{lem}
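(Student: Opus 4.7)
The plan is to compute $(r-1)K_{f}^{2}(\Ga_{p})_{\Ha{z}}$ directly from its definition~(\ref{localinv2}) and to compare it with the target expression; the difference turns out to isolate a single combinatorial inequality. Multiplying~(\ref{localinv2}) by $(r-1)$ and substituting the definition of $\Ha{\Al}_{0}(\Ga_{p})_{\Ha{z}}$ in~(\ref{eq(2.2)}), together with the identity $\sharp(\Ti{R}_{0})_{v}(p)_{\Ha{z}}=\sum_{a\geq 2}j_{a}(\Ga_{p})_{\Ha{z}}$ (which holds because, by Remark~\ref{basic}, every vertical component of $\Ti{R}$ is a $(-an)$-curve and $\Ti{R}_{0}$ omits exactly the $(-n)$-curves), the combination $\Ha{\Al}_{0}(\Ga_{p})_{\Ha{z}}-2j_{1}(\Ga_{p})_{\Ha{z}}$ appearing in~(\ref{localinv2}) collapses to $\Al_{0}^{+}(\Ga_{p})_{\Ha{z}}-2j(\Ga_{p})_{\Ha{z}}$, where $j(\Ga_{p})_{\Ha{z}}:=\sum_{a\geq 1}j_{a}(\Ga_{p})_{\Ha{z}}$.

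Subtracting the desired right-hand side from this exact expression, all terms cancel except
\[
2\,\frac{n-1}{n}\bigl((n-1)r-2n\bigr)\left(\sum_{k\geq 1}\Ha{\Al}_{k}(\Ga_{p})_{\Ha{z}}-j(\Ga_{p})_{\Ha{z}}\right).
\]
Since $(n-1)r-2n=2g-2>0$ under the standing assumption $g\geq 2$, the lemma reduces to the combinatorial inequality $j(\Ga_{p})_{\Ha{z}}\leq \sum_{k\geq 1}\Ha{\Al}_{k}(\Ga_{p})_{\Ha{z}}$.

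The substantive step is to establish this last inequality. Its left-hand side counts the irreducible components of $\Ti{R}_{v}(p)_{\Ha{z}}$, while its right-hand side counts the total number of singular points (of multiplicity in $n\Z_{\geq 1}\cup(n\Z_{\geq 1}+1)$) of the successive proper transforms of $\Ha{R}$ blown up during $\Ha{\psi}$ over $\Ha{z}$. By Lemma~\ref{lem1.2}(1), a blow-up at a point of type $n\Z$ contributes no new component to the branch locus, whereas one at a point of type $n\Z+1$ produces an exceptional curve lying in the branch locus whose proper transform ultimately becomes a single $(-an)$-curve in $\Ti{R}_{v}(p)_{\Ha{z}}$; conversely, every component of $\Ti{R}_{v}(p)_{\Ha{z}}$ arises in this way from a unique type $n\Z+1$ blow-up. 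Hence $j(\Ga_{p})_{\Ha{z}}$ equals the number of type $n\Z+1$ blow-ups in $\Ha{\psi}$ over $\Ha{z}$, which is bounded above by the total count $\sum_{k\geq 1}\Ha{\Al}_{k}(\Ga_{p})_{\Ha{z}}$. This bookkeeping is the main obstacle; once it is settled, the rest of the proof is a routine algebraic manipulation of~(\ref{localinv2}) and~(\ref{eq(2.2)}).
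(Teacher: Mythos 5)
Your proof is correct and follows essentially the same route as the paper: first expand the definition of $K_{f}^{2}(\Ga_{p})_{\Ha{z}}$ so that $\Ha{\Al}_{0}(\Ga_{p})_{\Ha{z}}-2j_{1}(\Ga_{p})_{\Ha{z}}$ collapses to $\Al_{0}^{+}(\Ga_{p})_{\Ha{z}}-2j(\Ga_{p})_{\Ha{z}}$, then bound $j(\Ga_{p})_{\Ha{z}}$ by $\sum_{k\geq 1}\Ha{\Al}_{k}(\Ga_{p})_{\Ha{z}}$ using the fact that each vertical component of the branch locus over $\Ha{z}$ comes from a type $n\Z+1$ blow-up. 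The paper phrases this last step termwise (writing $j(\Ga_{p})_{\Ha{z}}=\sum_{k}j^{k}(\Ga_{p})_{\Ha{z}}$ and using $\Ha{\Al}_{k}(\Ga_{p})_{\Ha{z}}\geq j^{k}(\Ga_{p})_{\Ha{z}}$), which is equivalent to your aggregate inequality since the coefficient involved is independent of $k$.
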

\begin{proof}
By the definition of $K_{f}^{2}(\Ga_{p})_{\Ha{z}}$, we have 
\begin{align*}
(r-1)K_{f}^{2}(\Ga_{p})_{\Ha{z}}=&\frac{n-1}{n}\left((n-1)r-2n\right)\Al_{0}^{+}(\Ga_{p})_{\Ha{z}}
-2\frac{n-1}{n}\left((n-1)r-2n\right)j(\Ga_{p})_{\Ha{z}}\\
+&(r-1)j_{1}(\Ga_{p})_{\Ha{z}}+\sum_{k\geq 1}\bigl(  (n^2-1)k(r-nk)-(r-1)n\bigr)\Ha{\Al}_{k}(\Ga_{p})_{\Ha{z}}. 
\end{align*}
Let $j^{k}(\Gamma_{p})_{\Ha{z}}$ be the number of an irreducible component contributing 
$j(\Gamma_{p})_{\Ha{z}}$ which arises from a singular point of multiplicity $nk+1$.
Then we have $j(\Gamma_{p})_{\Ha{z}} =\sum_{k \geq 1}j^{k}(\Gamma_{p})_{\Ha{z}}$.
Since $\Ha{\Al}_{k}(\Ga_{p})_{\Ha{z}} \geq j^{k}(\Gamma_{p})_{\Ha{z}}$, we have 
\begin{align*}
 (r-1)K_{f}^{2}(\Ga_{p})_{\Ha{z}} 
 &\geq \frac{n-1}{n}\left((n-1)r-2n \right)\Al_{0}^{+}(\Gamma_{p})_{\Ha{z}}+(r-1)j_{1}(\Gamma_{p})_{\Ha{z}}  \\
& + \sum_{k\geq 1}\biggl( -2\frac{n-1}{n}\bigr((n-1)r-2n \bigl)
+(n^2 -1)k(r-nk)-n(r-1) \biggr)\Ha{\Al}_{k}(\Ga_{p})_{\Ha{z}} . 
  \end{align*}
\end{proof}

\begin{lem}
\label{addlem}
Let the notation and the assumption be as in the above.
Then it holds that
\begin{align*}
 (r-1)K_{f}^{2}(\Ga_{p})
 &\geq  \sum_{\Ha{z}\in \Ha{\Gamma}_{p}}  \frac{n-1}{n}\left((n-1)r-2n \right)\Al_{0}^{+}(\Gamma_{p})_{\Ha{z}}+\sum_{\Ha{z}\in \Ha{\Gamma}_{p}}(r-1)j_{1}(\Gamma_{p})_{\Ha{z}}  \\
 & + \sum_{\Ha{z}\in \Ha{\Gamma}_{p}} \biggl\{ \sum_{k\geq 1}\biggl( -2\frac{n-1}{n}\bigr((n-1)r-2n \bigl)
+(n^2 -1)k(r-nk)-n(r-1) \biggr) \biggr\}\Ha{\Al}_{k}(\Ga_{p})_{\Ha{z}} \\
 &-2\frac{n-1}{n}\left((n-1)r-2n\right)\sharp\Ha{R}_{v}(p)+(r-1)\Ch{j}_{1}(\Ga_{p})\\
 &+ \sum_{k\geq 1}\left((n^2 -1)k(r-nk)-n(r-1) \right)\Ch{\alpha}_{k}(\Ga_{p}).\\
 \end{align*}
\end{lem}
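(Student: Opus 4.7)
The plan is to combine the exact formula from Proposition~\ref{loc'nofK} with the pointwise lower bound from Lemma~\ref{10/18.2}, in that order.

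First I would multiply the identity in Proposition~\ref{loc'nofK} through by $(r-1)$ to clear the denominators, obtaining
\begin{align*}
(r-1)K_{f}^{2}(\Ga_{p})&=\sum_{\Ha{z}\in \Ha{\Ga}_{p}}(r-1)K_{f}^{2}(\Ga_{p})_{\Ha{z}}
 -2\frac{n-1}{n}\bigl((n-1)r-2n\bigr)\sharp\Ha{R}_{v}(p)+(r-1)\Ch{j}_{1}(\Ga_{p})\\
 &\phantom{=}+\sum_{k\geq 1}\bigl((n^2-1)k(r-nk)-(r-1)n\bigr)\Ch{\Al}_{k}(\Ga_{p}).
\end{align*}
At this stage the last line already matches the ``$\Ch{}$-part'' of the conclusion, and the term with $\sharp\Ha{R}_{v}(p)$ and $\Ch{j}_{1}(\Ga_{p})$ is carried through unchanged.

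Next I would apply Lemma~\ref{10/18.2} to each summand $(r-1)K_{f}^{2}(\Ga_{p})_{\Ha{z}}$ in the first sum; this bounds it below by
\[
\frac{n-1}{n}\bigl((n-1)r-2n\bigr)\Al_{0}^{+}(\Ga_{p})_{\Ha{z}}+(r-1)j_{1}(\Ga_{p})_{\Ha{z}}+\sum_{k\geq 1}\Bigl(-2\tfrac{n-1}{n}\bigl((n-1)r-2n\bigr)+(n^{2}-1)k(r-nk)-n(r-1)\Bigr)\Ha{\Al}_{k}(\Ga_{p})_{\Ha{z}}.
\]
Summing this inequality over $\Ha{z}\in\Ha{\Ga}_{p}$ and substituting into the expression displayed above gives exactly the claimed inequality: the first two sums in the conclusion come from the summed $\Al_{0}^{+}$ and $j_{1}$ terms, the double sum over $k$ and $\Ha{z}$ reproduces the $\Ha{\Al}_{k}$ contribution, and the remaining two lines are already present from the Proposition.

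There is no real obstacle here—the lemma is a direct concatenation of the equality in Proposition~\ref{loc'nofK} with the inequality in Lemma~\ref{10/18.2}. The only care needed is bookkeeping: one must make sure that the coefficient $\tfrac{n-1}{n}((n-1)r-2n)$ of $\sharp\Ha{R}_{v}(p)$ produced by clearing $(r-1)$ from Proposition~\ref{loc'nofK} is indeed the same coefficient appearing in the conclusion, and that the ``$\Ch{}$'' contributions are not accidentally estimated (they are handled by equality, not by Lemma~\ref{10/18.2}, which applies only to $\Ha{}$-quantities localized at $\Ha{z}$).
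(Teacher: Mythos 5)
Your proposal is correct and is essentially identical to the paper's own proof: the paper likewise multiplies the identity of Proposition~\ref{loc'nofK} by $(r-1)$ and then plugs in the pointwise lower bound of Lemma~\ref{10/18.2} for each $(r-1)K_{f}^{2}(\Ga_{p})_{\Ha{z}}$. Your bookkeeping remarks (the $\Ch{}$-terms enter by equality, only the $\Ha{z}$-local terms are estimated) match what the paper does implicitly.
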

\begin{proof}
By the Proposition~\ref{loc'nofK},
we have
 \begin{align*}
(r-1)K_{f}^{2}(\Ga_{p})&=\sum_{\Ha{z} \in \Ha{\Ga}_{p}} K_{f}^{2}(\Ga_{p})_{\Ha{z}}
 -2\frac{n-1}{n}\left((n-1)r-2n\right)\sharp\Ha{R}_{v}(p)+(r-1)\Ch{j}_{1}(\Ga_{p})\\
 &+\sum_{k\geq 1}\bigl(  (n^2-1)k(r-nk)-(r-1)n\bigr)\Ch{\Al}_{k}(\Ga_{p}).
\end{align*}
Plugging the inequality of Lemma~\ref{10/18.2} to the above equality,
we obtain the desired inequality.
\end{proof}

\section{Automorphism of a fibered surface}

It is classically known that finite subgroups of $\Aut(\PROJ^1)$ can be classified as in the following table:
\begin{table}[htb]
  \begin{tabular}{|ll|c|c|} \hline
  & & Order & Length of non-trivial orbits \\ \hline 
  $\Z_{l}$ &Cyclic group &$ l $  & $1$, $1$ \\
  $D_{2l}$ &Dihedral group  & $2l$ ($l\geq 2$) & $l$, $l$, $2$ \\
   $T_{12}$ &Tetrahedral group  &$12$  & $4$, $4$, $6$ \\ 
   $O_{24}$ &Octahedral group  & $24$  &$ 6$, $8$, $12$ \\
 $I_{60}$ &Icosahedral group  & $60$  & $12$, $20$, $30$ \\ \hline
  \end{tabular}
  \caption{Finite subgroups of $\Rm{Aut}(\Bb{P}^{1})$}
  \label{subgroup}
\end{table}

In the Table~\ref{subgroup}, the entry ``4, 4, 6'' for example means 
the action of the tetrahedral group has one orbit of length $6$, two orbits of length $4$ and the other orbits are of length $12$.  
Note that the group which has a fixed point is necessarily a cyclic group.

For a fibration $f:S \to B$, we define the automorphism group of $f$ as 
\[
\Aut (f):=\{(\kappa_{S},\kappa_{B})\in \Aut (S)\times \Aut(B) \mid f \circ \kappa_{S} =  \kappa_{B} \circ  f \}.
\]

Let $f:S\to B$ be a primitive cyclic covering fibration of type $(g, 0, n)$.
Put $\Rm{Aut}(S/B):=\{(\kappa_{S}, \Rm{id})\in \Rm{Aut}(f)\}$.
We can regard $\Sigma$ as subgroup of $\Rm{Aut}(S/B)$,
where $\Sigma$ is defined in Remark~\ref{basic}.
Take an arbitrary finite subgroup $G$  of $\Aut (f)$.
Since $\Aut (S/B)$ is a finite group, we may assume $(\sigma,\Rm{id}) \in G$ to estimate the order of it.
We have the exact sequence 
\begin{align*}
1\to K \to G \to H \to1, 
\end{align*}
where 
$K:=\{(\kappa_{S}, \Rm{id})\in G\}$ and  $H:=\{\kappa_{B} \in \Aut (B)\mid  (\kappa_{S},\kappa_{B})\in G, \exists \kappa_{S}\in \Aut (S)\}$.
$K$ can be considered as a subgroup of $\Aut(F)$ for a general fiber $F$ of $f$.

\begin{prop}
\label{homlemma}
Let $\theta_{1}:F_1 \to \Gamma_{1}$ and $\theta_{2}:F_2 \to \Gamma_{2}$ be  totally ramified coverings of degree $n$ between smooth projective curves.
Let $r$ be a number of ramification points of $\theta_{1}$.
Assume $r>  2(\frac{n^{2}}{n-1}+2g(\Gamma_{2})-2)$.
Then, for any  isomorphism $\kappa_{F}:F_{1} \to F_{2}$ 
 there exists an automorphism $\kappa_{\Gamma}:\Gamma_{1} \to \Gamma_{2}$ 
which fit into the commutative diagram
\begin{equation*}
 \vcenter{ \xymatrix{ 
 F_{1} \ar[r]^{\kappa_{F}} \ar[d]_{\theta_{1}} 
 & F_{2} \ar[d]^{\theta_{2}} \\
 \Gamma_{1} \ar[r]_{\kappa_{\Gamma}} 
 & \Gamma_{2}} }
 \end{equation*}
\end{prop}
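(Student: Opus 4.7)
The strategy is to reformulate the conclusion as a factorization statement and attack it with the Castelnuovo--Severi inequality. Since $\kappa_{F}$ is an isomorphism, the composite $\pi:=\theta_{2}\circ\kappa_{F}:F_{1}\to\Gamma_{2}$ is itself a totally ramified morphism of degree $n$. The existence of $\kappa_{\Gamma}$ with $\theta_{2}\kappa_{F}=\kappa_{\Gamma}\theta_{1}$ is equivalent to $\pi$ factoring through $\theta_{1}$; and, since $\deg\pi=\deg\theta_{1}=n$, any such factor map automatically has degree one and is an isomorphism. So the task reduces to producing a common factorization of the pair $(\theta_{1},\pi)$ through a single quotient of $F_{1}$, together with forcing that common quotient to be degree $n$ over $F_{1}$ (equivalently, the factor map to each $\Gamma_{i}$ has degree one).

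To obtain a nontrivial common factor, I would apply Castelnuovo--Severi to the two degree-$n$ morphisms $\theta_{1}$ and $\pi$. If they admit no common nontrivial factor, then
\[
g(F_{1})\leq n\,g(\Gamma_{1})+n\,g(\Gamma_{2})+(n-1)^{2}.
\]
Substituting the Hurwitz identity for $\theta_{1}$, namely $2g(F_{1})-2=n(2g(\Gamma_{1})-2)+r(n-1)$ (which uses total ramification), yields the upper bound $r\leq 2n+\frac{2ng(\Gamma_{2})}{n-1}$. A direct computation shows the hypothesis exceeds this bound by $\frac{2(n-2)(g(\Gamma_{2})-1)}{n-1}$, which is non-negative when $g(\Gamma_{2})\geq 1$, yielding the desired contradiction. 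Thus there is a nontrivial common factor $g:F_{1}\to W$ with morphisms $h_{i}:W\to\Gamma_{i}$ satisfying $\theta_{1}=h_{1}g$ and $\pi=h_{2}g$.

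To conclude, one must upgrade this to $\deg g=n$, so that $h_{1}$ and $h_{2}$ are isomorphisms and one sets $\kappa_{\Gamma}:=h_{2}\circ h_{1}^{-1}$. For $n$ prime this is immediate. In general, total ramification of $\theta_{1}$ forces $e_{P}(g)\cdot e_{g(P)}(h_{1})=n$ at each of the $r$ ramification points $P$, and applying Hurwitz separately to $g$ and to $h_{1}$ should push the genus of $W$ out of the admissible range unless $\deg g=n$. The main obstacle I anticipate is the case $g(\Gamma_{2})=0$ (which is what the paper actually needs, since the ruled surface has rational fibers): in this range the hypothesis $r>\frac{2n^{2}}{n-1}-4$ does not exceed the raw Castelnuovo--Severi threshold $2n$ for $n\geq 3$, so one probably needs either a uniqueness theorem for totally ramified $g^{1}_{n}$ pencils on $F_{1}$ under a large-ramification assumption, or a refinement of Castelnuovo--Severi exploiting simultaneous total ramification of both $\theta_{1}$ and $\pi$. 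A clean combinatorial bookkeeping to rule out intermediate degrees $1<\deg g<n$ for composite $n$ is the other place where care will be required.
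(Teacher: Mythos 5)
Your reduction to a common-factorization statement is fine, and your computation deriving $r\leq 2n+\frac{2n\,g(\Gamma_2)}{n-1}$ from the Castelnuovo--Severi inequality combined with Hurwitz for $\theta_1$ is correct; but, as you yourself note, this contradicts the hypothesis only when $g(\Gamma_2)\geq 1$. That is a genuine gap, not a technicality: in this paper the proposition is applied exclusively with $\Gamma_2\cong\mathbb{P}^1$ (the fibers of the ruled surface), so the one case your argument does not reach is the only case that is ever used. The root of the problem is that the raw Castelnuovo--Severi bound discards all ramification data of $\theta_1$ and $\pi=\theta_2\circ\kappa_F$, and for $g(\Gamma_2)=0$ the resulting error term $(n-1)^2$ is too large to be beaten by the stated hypothesis on $r$. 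Your second loose end --- ruling out intermediate degrees $1<\deg g<n$ for composite $n$ --- is likewise only a sketch (``should push the genus of $W$ out of the admissible range'') rather than an argument.

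The refinement you guess at in your final sentence is exactly what the paper's proof supplies, and it settles both issues simultaneously. One maps $F_1$ into $\Gamma_1\times\Gamma_2$ by $\Psi=(\theta_1,\pi)$ and sets $d=\deg(F_1\to\Psi(F_1))$. If $d=n$ the first projection restricted to $\Psi(F_1)$ is an isomorphism and one is done; otherwise both projections restrict to coverings of degree $e:=n/d\geq 2$ on $\Psi(F_1)$. Bounding $p_a(\Psi(F_1))$ from above by adjunction on $\Gamma_1\times\Gamma_2$ together with the Hodge index estimate $\Psi(F_1)^2\leq 2e^2$, and from below by the genus of the normalization $E$ of $\Psi(F_1)$ --- computed via Hurwitz for the induced degree-$e$ covering $E\to\Gamma_1$, which is still totally ramified over the $r$ branch points of $\theta_1$ --- gives $(e-1)r\leq 2e^2+e(2g(\Gamma_2)-2)$. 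The term $(e-1)r$ is precisely the ramification contribution that plain Castelnuovo--Severi throws away, and it is what converts the inequality into an upper bound on $r$ even when $g(\Gamma_2)=0$; moreover the estimate is uniform in $d$, so the intermediate-factor cases for composite $n$ need no separate treatment. (One caveat worth recording if you complete the argument this way: for $g(\Gamma_2)=0$ and $d=1$ the inequality only yields $r\leq 2e\leq 2n$, which for $n\geq 3$ is weaker than the stated threshold $\frac{2n^2}{n-1}-4$; this is harmless where the proposition is invoked, since there $r\geq 3n>2n$, but the constant in the statement should really be adjusted accordingly.)
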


\begin{proof}
We have a commutative diagram
\[\xymatrix{
F_{1} \ar[rd]^{\Psi} \ar@/_1.5pc/[ddr]_{\theta_{1}} \ar@/^1.5pc/[rrd]^{\theta_{2}\circ \kappa_{F}}&  & \\
  & \Gamma_{1}\times  \Gamma_{2}  \ar[r]^{p_2} \ar[d]_{p_1}& \Gamma_{2}\\
& \Gamma_{1} &
}\]
where $p_{i} : \Gamma_{1} \times \Gamma_{2} \to \Gamma_{i}$ is the projection for $i=1, 2$ and 
$\Psi:F_{1} \to \Gamma_{1} \times \Gamma_{2}$ is induced by the universality of the fibre product.
Let $d:=\Rm{deg}\;( \Psi:F_1 \to \Psi(F_{1}))$.
If $d=n$, then $p_{1}|_{\Psi(F_{1})}:\Psi(F_{1})\to \Gamma_{1}$ is an isomorphism by Zariski's main theorem.
Hence we obtain the desired isomorphism $\kappa_{\Gamma} =p_{2}|_{\Psi(F_1)}\circ (p_{1}|_{\Psi(F_{1})})^{-1}$. 
So we may assume $d < n$.
Then the degree of $p_{1}|_{\Psi(F_1)}$ and $p_{2}|_{\Psi(F_1)}$ are $n/d$.
We will show $r \leq 2(\frac{n^{2}}{n-1}+2g(\Gamma_{2})-2)$.
Let $\nu:E \to \Psi(F_1)$ be the normalization of $\Psi(F_1)$.
It is well known that $p_{a}(\Psi(F_1))\geq g(E)$.
Since the morphism $p_{1}|_{\Psi(F_1)}\circ \nu$ is a totally ramified cover, we have
\begin {align}
\label{geometricgenus}
2g(E)-2=\frac{n}{d}(2g(\Gamma_{1})-2)+(\frac{n}{d}-1)r
\end {align}  
by the Hurwitz formula.

On the other hand, we have 
\begin{align}
\label{12/21,1}
2p_{a}(\Psi(F_1))-2&=(K_{\Gamma_{1}\times \Gamma_{2}}+\Psi(F_1))\Psi(F_1)\\
\nonumber
 &=(p_{1}^{\ast}K_{\Gamma_{1}}+p_{2}^{\ast}K_{\Gamma_{2}}+\Psi(F_1))\Psi(F_1)\\
\nonumber
 &=\frac{n}{d}(2g(\Gamma_{1})-2)+\frac{n}{d}(2g(\Gamma_{2})-2)+(\Psi(F_1))^2.
\end{align}
Let $I_{i}$ be a fiber of $p_{i}$ for $i=1,2$.
Noting that $(I_{1}+I_{2})^2=2I_{1} I_{2}=2>0$,
we have
\begin{align}
\label{12/21,2}
 \left(\frac{2n}{d}\right)^{2} = (\Psi(F_1)(I_{1}+I_{2}))^2 \geq 2\Psi(F_1)^2
\end{align} 
by the Hodge index theorem.
Therefore we have 
\begin{align*}
2(\frac{n}{d})^2 \geq  2p_{a}(\Psi(F_1))-2-\frac{n}{d}(2g(\Gamma_{1})-2)-\frac{n}{d}(2g(\Gamma_{2})-2).
\end{align*}
from (\ref{12/21,1}) and (\ref{12/21,2}), which is equivalent to 
\begin{align*}
2\left(\frac{n}{d}\right)^2+\frac{n}{d}(2g(\Gamma_{1})-2)+\frac{n}{d}(2g(\Gamma_{2})-2)\geq 2p_{a}(\Psi(F_1))-2.
\end{align*}
From (\ref{geometricgenus}), we have 
\begin{align*}
2\left(\frac{n}{d}\right)^2+\frac{n}{d}(2g(\Gamma_{1})-2)+\frac{n}{d}(2g(\Gamma_{2})-2)\geq \frac{n}{d}(2g(\Gamma_{1})-2)+\left(\frac{n}{d}-1\right)r.
\end{align*}
It implies that $r \leq 2(\frac{n^{2}}{n-1}+2g(\Gamma_{2})-2)$.
\end{proof}

\begin{lem}
\label{orbit-lem}
Assume that $r \geq 3n$. 
Take a smooth fiber $F$ of $f$ and arbtrary point $z \in F$.
For any $(\kappa_{S}, \kappa_{B}) \in G$, it holds
\begin{align*}
\kappa_{S}(\Sigma\cdot z)=\Sigma\cdot{\kappa_{S}(z)},
\end{align*} 
where $\Sigma\cdot z$ denotes the $\Sigma$-orbit of $z$.
\end{lem}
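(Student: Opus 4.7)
The plan is to apply Proposition~\ref{homlemma} to the restriction $\kappa_{S}|_{F}:F\to\kappa_{S}(F)$ and then extract the orbit statement from the resulting commuting square. The setup first: for $(\kappa_{S},\kappa_{B})\in G$ and a smooth fiber $F$ of $f$, the image $F':=\kappa_{S}(F)$ is another smooth fiber. Because the birational morphism $\tilde{S}\to S$ (and the corresponding $\tilde{\psi}:\tilde{P}\to P$) only modifies singular fibers, both $F$ and $F'$ appear verbatim as smooth fibers of $\tilde{f}$. The restrictions of $\tilde{\theta}$ then give totally ramified cyclic $n$-coverings $\tilde{\theta}|_{F}:F\to\Gamma\cong\mathbb{P}^{1}$ and $\tilde{\theta}|_{F'}:F'\to\Gamma'\cong\mathbb{P}^{1}$, each with covering group $\Sigma$ and exactly $r$ branch points.

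Next I would verify the hypothesis of Proposition~\ref{homlemma} with $\kappa_{F}:=\kappa_{S}|_{F}$. Since $g(\Gamma')=0$ it reads $r>2\bigl(\tfrac{n^{2}}{n-1}-2\bigr)$, and a direct calculation shows this is implied by $r\geq 3n$ for every $n\geq 2$ (it reduces to $n^{2}+n-4>0$). The proposition then produces an isomorphism $\kappa_{\Gamma}:\Gamma\to\Gamma'$ with $\tilde{\theta}|_{F'}\circ\kappa_{S}|_{F}=\kappa_{\Gamma}\circ\tilde{\theta}|_{F}$.

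Finally I would read off the orbit identity from the commuting square. Commutativity gives $\tilde{\theta}|_{F'}(\kappa_{S}(w))=\kappa_{\Gamma}(\tilde{\theta}|_{F}(w))$ for every $w\in F$. In particular, all points of $\Sigma\cdot z$ share the same image under $\tilde{\theta}|_{F'}\circ\kappa_{S}$, so $\kappa_{S}(\Sigma\cdot z)$ is contained in a single fiber of $\tilde{\theta}|_{F'}$, namely $\Sigma\cdot\kappa_{S}(z)$. Applying the same reasoning to $(\kappa_{S}^{-1},\kappa_{B}^{-1})\in G$ yields the reverse inclusion, so equality holds. The only step that requires any calculation is the numerical verification that $r\geq 3n$ satisfies the hypothesis of Proposition~\ref{homlemma}; everything else is formal diagram chasing, so I do not expect a real obstacle.
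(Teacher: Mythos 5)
Your proposal is correct and follows the same route as the paper: apply Proposition~\ref{homlemma} to $\kappa_{S}|_{F}$ and the two quotient maps by $\Sigma$ (the hypothesis $r>2(\tfrac{n^{2}}{n-1}-2)$ indeed follows from $r\geq 3n$), then read the orbit identity off the commuting square. The extra details you supply (the numerical check and the two-inclusion diagram chase) are exactly what the paper leaves implicit.
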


\begin{proof}
Actions $\sigma|_{F}$ and $\sigma|_{\kappa_{S}(F)}$ 
induce quotient morphisms $\theta_{1}:F \to \Bb{P}^1$ and  $\theta_{2}:\kappa_{S}(F) \to \Bb{P}^1$ of degree $n$, respectively.
From the assumption that $r\geq 3n$, there exists a isomorphism $\kappa_{\Bb{P}}:\Bb{P}^1 \to \Bb{P}^1$ such that the diagram 
\begin{equation*}
 \vcenter{ \xymatrix{ 
 F \ar[r]^{\kappa_{S}} \ar[d]_{\theta_{1}} 
 & \kappa_{S}(F) \ar[d]^{\theta_{2}} \\
 \Bb{P}^1 \ar[r]_{\kappa_{\Bb{P}}} 
 & \Bb{P}^1} } 
 \end{equation*}
commutes for any $\kappa_{S} \in G$ by Proposition \ref{homlemma}.
Hence we obtain $\kappa_{S}(\Sigma\cdot{z})=\Sigma\cdot{\kappa_{S}(z)}$.
\end{proof}

\medskip

In what follows, we tacitly assume that $r\geq 3n$.

\medskip

\begin{lem}
\label{normalgp-Lem1}
The covering transformation group $\Sigma$ is a normal subgroup of $G$.
\end{lem}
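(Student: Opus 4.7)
The plan is to show that for any $(\kappa_{S},\kappa_{B})\in G$ the conjugate $\tau:=\kappa_{S}\sigma\kappa_{S}^{-1}$ already lies in $\Sigma$. Since $\kappa_{S}$ covers $\kappa_{B}$ and $\sigma$ covers $\Rm{id}_{B}$, the conjugate $\tau$ covers $\kappa_{B}\circ\Rm{id}_{B}\circ\kappa_{B}^{-1}=\Rm{id}_{B}$, so $\tau\in \Aut(S/B)$ and in particular $\tau$ restricts to an automorphism of every fiber of $f$.

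Next, I would extract fibrewise information from Lemma~\ref{orbit-lem}. For a general point $p\in B$, the smooth fiber $F=F_{p}$ of $f$ coincides with the corresponding fiber of $\tilde f$, so $\theta|_{F}\colon F\to \Ga_{p}\cong\PROJ^{1}$ is a Galois cyclic cover with Galois group $\Sigma|_{F}$ of order $n$; hence any automorphism of $F$ preserving $\theta$-fibers lies in $\Sigma|_{F}$. Applying Lemma~\ref{orbit-lem} to $(\kappa_{S}^{-1},\kappa_{B}^{-1})\in G$ yields $\kappa_{S}^{-1}(\Sigma\cdot z)=\Sigma\cdot \kappa_{S}^{-1}(z)$ for every $z\in F$, and therefore
\[
\tau(z)=\kappa_{S}\bigl(\sigma(\kappa_{S}^{-1}(z))\bigr)\in \kappa_{S}\bigl(\Sigma\cdot\kappa_{S}^{-1}(z)\bigr)=\Sigma\cdot z.
\]
Thus $\tau|_{F}$ preserves every $\Sigma$-orbit in $F$, so it descends to the identity on $F/\Sigma$, and consequently $\tau|_{F}=\sigma^{i(F)}|_{F}$ for some $i(F)\in\{0,1,\dots,n-1\}$.

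Finally, I would globalize this fibrewise identification by a standard irreducibility argument. For each $i\in\{0,\dots,n-1\}$, the equalizer
\[
V_{i}:=\{\,x\in S \mid \tau(x)=\sigma^{i}(x)\,\}
\]
is a closed subscheme of $S$, being the preimage of the diagonal under $(\tau,\sigma^{i})\colon S\to S\times S$. By the previous paragraph, every point of every general smooth fiber of $f$ belongs to some $V_{i}$; hence the closed set $\bigcup_{i=0}^{n-1}V_{i}$ contains a Zariski dense subset of $S$ and must equal $S$. Since $S$ is irreducible, one of the $V_{i}$'s, say $V_{i_{0}}$, equals $S$, which forces $\tau=\sigma^{i_{0}}\in\Sigma$ and proves that $\Sigma$ is normal in $G$.

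The main subtlety, rather than a genuine obstacle, is the passage from the fibrewise identification $\tau|_{F}=\sigma^{i(F)}|_{F}$ to a global one: a priori the exponent $i(F)$ could jump between fibers, and the closedness of each $V_{i}$ together with the irreducibility of $S$ is precisely what rules this out.
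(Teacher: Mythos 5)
Your proof is correct, and it reaches the conclusion by a genuinely different mechanism than the paper. The paper does not verify the conjugation condition directly: it first uses Lemma~\ref{orbit-lem} to show that the horizontal fixed locus $\Fix^{1}(\Sigma)$ is $G$-stable, obtains a homomorphism $\lambda:G\to \Aut(\Fix^{1}\Sigma)$, and then proves $\Ker\lambda=\Sigma$ (an element of the kernel must have $\kappa_{B}=\mathrm{id}$ because it fixes a horizontal curve, and its descent to $\PROJ^{1}$ on a general fiber fixes at least $r\geq 3$ points, hence is the identity), so that $\Sigma$ is normal as the kernel of a homomorphism. You instead conjugate $\sigma$ by an arbitrary element of $G$, use Lemma~\ref{orbit-lem} to see that the conjugate preserves every $\Sigma$-orbit on a general smooth fiber, identify it fiberwise with a power of $\sigma$ via the deck-transformation group of the cyclic cover $F\to F/\Sigma$, and then globalize with the closed equalizers $V_{i}$ and irreducibility of $S$. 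Both arguments ultimately rest on the same key input (Proposition~\ref{homlemma} through Lemma~\ref{orbit-lem}), but yours is more self-contained: it avoids introducing $\Fix^{1}(\Sigma)$ altogether, and it makes explicit the fiber-to-global passage that the paper compresses into the phrase ``since $F$ is a general fiber of $f$, we have $(\kappa_{S},\mathrm{id})\in\Sigma$.'' The paper's route, in exchange, packages the normality statement into the cleaner form ``$\Sigma$ is a kernel,'' and the auxiliary homomorphism $\lambda$ reflects the geometric role of the ramification curve. One small economy you could make: applying Lemma~\ref{orbit-lem} to $(\kappa_{S},\kappa_{B})$ itself at the point $w=\kappa_{S}^{-1}(z)$ gives $\kappa_{S}(\Sigma\cdot\kappa_{S}^{-1}(z))=\Sigma\cdot z$ directly, so passing to the inverse element is not needed; but as written your use of $(\kappa_{S}^{-1},\kappa_{B}^{-1})\in G$ is equally valid.
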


\begin{proof}
Let $\Rm{Fix}^{1}(\Sigma)$ be the closed subvariety which consists with $f$-horizontal
$1$-dimensional fixed locus of $\Sigma$.
We note that $\Rm{Fix}^{1}(\Sigma)$ is a finite disjoint union of smooth curves.
From Lemma~~\ref{orbit-lem},
we have 
$
\{\kappa_{S}(z)\}=\kappa_{S}(\Sigma\cdot z)=\Sigma  \cdot {\kappa_{S}(z)}
$
for any $(\kappa_{S}, \kappa_{B}) \in G$ and general $z\in\Fix^{1}(\Sigma)$.
Hence there exists a group homomorphism 
\begin{align*}
\lambda:G \to \Rm{Aut}(\Rm{Fix^{1}{\Sigma}}),
\end{align*}
where $\Rm{Aut}(\Rm{Fix^{1}{\Sigma}})$ is the automorphism group of $\Rm{Fix^{1}{\Sigma}}$.
It suffices for our purpose to show that $\Ker\;{\lambda}=\Sigma$.
It is clear that $\Sigma\subset \Ker\;\lambda$, we shall show $\Ker\;\lambda \subset \Sigma$.
Since $(\kappa_{S}, \kappa_{B}) \in \Ker\;\lambda$ fixes  $\Rm{Fix^{1}{\Sigma}}$, 
we have $\kappa_{B}=\Rm{id}$.
Let $F$ be a smooth fiber of $f$ and let $\theta:F \to F/\Sigma = \Bb{P}^{1}$.
There exists a isomorphism $\kappa_{\Bb{P}}:\Bb{P}^1 \to \Bb{P}^1$ such that the diagram 
\begin{equation*}
 \vcenter{ \xymatrix{ 
 F \ar[r]^{\kappa_{S}|_{F}} \ar[d]_{\theta} 
 & F \ar[d]^{\theta} \\
 \Bb{P}^1 \ar[r]_{\kappa_{\Bb{P}}} 
 & \Bb{P}^1} } 
 \end{equation*}
commutes for any $\kappa \in \Rm{Ker}\;\lambda$ by Proposition \ref{homlemma}. 
Since the action $\kappa|_{F}$ fixes every point in $\Fix^{1}(\Sigma)\cap F$, 
which consisted with at least $r$ points,
we see that $\kappa_{\Bb{P}}$ fixes more than two points.
So $\kappa_{\Bb{P}}$ is the identity, i.e., $\kappa_{S}|_{F} \in \Aut(F/\PROJ^1)$. 
Since $F$ is a general fiber of $f$, we have $(\kappa_{S},\Rm{id}) \in \Sigma$.
\end{proof}


From Lemma~{\ref{normalgp-Lem1}}, the set of isolated fixed point $\Sigma$ is $G$-stable.
Since $\Ti{S} \to S$ is a minimal succession of blowing-ups that resolves all isolated fixed point of $\Sigma$,
the action of $G$ on $S$ can be lifted to the one on $\Ti{S}$ and we can regard $G$ as subgroup of $\Rm{Aut}(\Ti{f})$.
We put $\Ti{G}=G/\Sigma$ and $\Ti{K}=K/\Sigma$, then we have three exact sequence  
\begin{align*}
 &1\to \Sigma \to G \to \Ti{G} \to 1,\\
 &1\to \Sigma \to K \to \Ti{K} \to 1,\\
 &1\to \Ti{K} \to \Ti{G} \to H \to1.
\end{align*}
Note that $\Ti{G} \subset \Aut (\Ti{\varphi})$, $\Ti{K}\subset \Rm{Aut}(\Ti{P}/B)$ and $\Ti{R}$ is $\Ti{G}$-stable (namely $\Ti{G}(\Ti{R})=\Ti{R}$).
\smallskip

\begin{lem}
\label{hyoujyunka}
There is a relatively minimal model $\PHI: P \to B$ of $\widetilde{\PHI}$
satisfying the following two conditions.

\begin{itemize}
\item $\Mult_{x} R_{h} \leq r/2$ holds for any point $x$ of the $\PHI$-horizontal part $R_h$ of $R$.

\item There exists a finite (possibly empty) subset $\Delta$ of $B$ such that the action of $\KT$ descends down faithfully on 
$P\setminus \PHI^{-1}(\Delta)$ but not over $\Delta$. One can find one point on each fiber of $\PHI$ in $\PHI^{-1}(\Delta)$ and, 
if $\overline{P}$ denotes the blowing-up at such $\sharp \Delta$ points on $P$, then the action of $\KT$ descends 
down faithfully on $\overline{P}$ which can switch two irreducible components on each singular fiber of the natural ruling 
$\overline{\varphi}:\overline{P}\to B$.
\end{itemize}
\end{lem}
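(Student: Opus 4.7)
The plan is to proceed in two stages: first construct a relatively minimal model $\varphi : P \to B$ to which $\widetilde{K}$ descends off a finite set $\Delta \subset B$, and then adjust via equivariant elementary transformations to enforce the multiplicity bound.

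For the first stage, starting from $\widetilde{P}$ with its $\widetilde{K}$-action, I perform $\widetilde{K}$-equivariant contractions of $(-1)$-curves in fibers: whenever a $(-1)$-curve $E$ in a $\widetilde{\varphi}$-fiber has a $\widetilde{K}$-orbit $\widetilde{K} \cdot E$ consisting of pairwise disjoint curves, contract the whole orbit simultaneously; the resulting surface is smooth and inherits a $\widetilde{K}$-action. Iterating this until no such equivariant contraction remains yields a ruled surface $\overline{P}$. I claim every singular fiber of the induced ruling $\overline{\varphi} : \overline{P} \to B$ consists of exactly two $(-1)$-components meeting transversally at one point, swapped by some element of $\widetilde{K}$. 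Indeed, a $\widetilde{K}$-invariant $(-1)$-curve would allow an equivariant contraction; and a careful analysis of the $\widetilde{K}$-action on the dual tree of $\mathbb{P}^1$'s (accounting for component multiplicities) shows that any singular fiber with three or more components must contain either an invariant $(-1)$-curve or a disjoint $\widetilde{K}$-orbit of them. Defining $\Delta \subset B$ as the image of the bad fibers and contracting one of the two $(-1)$-components on each such fiber yields a relatively minimal model $\varphi : P \to B$. The $\widetilde{K}$-action descends faithfully on $P \setminus \varphi^{-1}(\Delta)$ because the iterated contractions were equivariant there, but not over $\Delta$ because the swap forces different $\widetilde{K}$-elements to contract different components; blowing up $P$ at the image point of the contracted curve on each fiber over $\Delta$ recovers $\overline{P}$.

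For the second stage, suppose $x \in R_h$ satisfies $\Mult_x R_h > r/2$. Since $R_h \cdot \Gamma = r$ on any fiber $\Gamma$ of $\varphi$, at most one such point can lie on a given fiber; working equivariantly on $\overline{P}$, where $\widetilde{K}$ acts globally, the $\widetilde{K}$-orbit of the preimage of $x$ thus picks out one bad point per fiber, and the elementary transformation (blow up the orbit, contract the proper transforms of the fibers through it) is $\widetilde{K}$-equivariant. By Lemma~\ref{up.bd.of.mult}, the multiplicity at $x$ lies in $n\Z + 1$, so the fiber through $x$ is not contained in $R$; the new multiplicity at the resulting image point becomes $r - \Mult_x R_h < r/2$. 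Finitely many iterations yield a $P$ satisfying both bullets of the lemma.

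The main obstacle is the termination claim in the first stage: precluding singular fibers of $\overline{\varphi}$ with three or more components. Because $(-1)$-curves in a singular fiber need not be leaves of the dual graph once components carry non-trivial multiplicities, the combinatorial analysis of $\widetilde{K}$-actions on labeled trees of $\mathbb{P}^1$'s is delicate and forms the bulk of the technical content. A secondary difficulty is to verify that the elementary transformations of the second stage preserve the descending structure built in the first stage—in particular, that high-multiplicity points avoid the bad fibers over $\Delta$, so that the two stages are truly compatible.
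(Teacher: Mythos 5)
Your two-stage outline follows the same overall strategy as the paper, but you have left the central claim of the first stage unproved: you assert that at the end of the equivariant contraction process every remaining singular fiber has exactly two components swapped by $\KT$, and you defer this to ``a careful analysis of the $\KT$-action on the dual tree,'' which you yourself describe as the bulk of the technical content. That is a genuine gap, and the envisioned combinatorics is not the right tool. The actual argument is a two-line intersection computation: if the orbit $\KT\cdot E$ of a $(-1)$-curve $E$ in a fiber is \emph{not} disjoint, pick $E_1,E_2$ in the orbit with $E_1E_2>0$; since the intersection form on a fiber is negative semi-definite and $E_1^2=E_2^2=-1$, one gets $E_1E_2=1$ and $(E_1+E_2)^2=0$, so $E_1+E_2$ is proportional to the fiber, and since a ruled surface has no multiple fibers, Zariski's lemma forces $\GAT_p=E_1+E_2$. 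Thus any fiber with three or more components automatically admits a disjoint orbit of $(-1)$-curves and the process continues; no tree analysis is needed, and the terminal bad fibers are exactly of the form $E_1+E_2$ with $\KT$ switching the two components.

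Your second stage is also incomplete on two points that the paper settles cleanly. First, a point $x$ with $\Mult_x R_h>r/2$ on a smooth fiber is necessarily a $\KT$-fixed point and $\KT$ is necessarily cyclic: otherwise its orbit on that fiber has length at least $2$, giving two analytically equivalent singular points of multiplicity $>r/2$ on a single fiber, contradicting $R_h\Gamma=r$. This is what makes the elementary transformation equivariant (it is performed at a single fixed point), rather than your vaguer ``orbit picks out one bad point per fiber.'' Second, your ``secondary difficulty'' --- that high-multiplicity points might sit on the fibers over $\Delta$ --- evaporates: on a fiber $E_1+E_2$ over $\Delta$ the switching element makes the points of $\RB_h$ on $E_1$ and $E_2$ analytically equivalent, so $\RB_hE_1=\RB_hE_2=r/2$ and every multiplicity there is automatically at most $r/2$. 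As written, your proposal identifies the right difficulties but resolves neither, so it does not yet constitute a proof.
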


\begin{proof}
Let $\GAT_{p}$ be an arbitrary singular fiber of $\PT\to B$, 
and $E$ be any $(-1)$-curve contained in $\GAT_{p}$.
Then its $\KT$-orbit $\KT\cdot E$ satisfies one of the following:
\begin{itemize}
  \item  $\KT\cdot{E}$ consists of disjoint union of $(-1)$-curves in $\GAT_{p}$,
  \item We can find two curves in $\KT\cdot E=\{E_{1}, E_{2}, \cdots, E_{t}\}$ meeting at a point,
  where $t$ is a number of irreducible components of $\KT\cdot E$
\end{itemize}

In the former case, contracting $\KT\cdot{E}$ to points, the action of $\KT$ descends down to the action on the new fiber obtained by the contraction.

In the latter case, we may assume that $E_1$ and $E_2$ meet. 
Since the intersection form on $E_1\cup E_2$ is negative semi-definite, $E_1^2=E_2^2=-1$ and $E_1E_2>0$, 
the only possibility is: $E_1E_2=1$ and $(E_1+E_2)^2=0$.
Since any ruled surface has no multiple fibers, we get $t=2$
and
\begin{align*}
\GAT_{p}=E_{1}+E_{2}, \quad E_{1}E_{2}=1 
\end{align*}
by Zariski's Lemma.
If the action of $\KT$ never switch $E_{1}$ and $E_{2}$,
$\KT$ induces the action on the new fiber 
 obtained by contracting either $E_{1}$ or $E_{2}$.
Otherwise, we stop.

Hence, repeating the above procedure,
we finally get a fiber without $(-1)$-curves or a fiber of type $E_{1}+E_{2}$ (and $\KT$ switches $E_{1}$ and $E_{2}$).

Let $\PHI' :P' \to B$ be the model at which the above algorithm terminates for all singular fibers of $\PHIT$.
\[
\xymatrix{
\PT \ar[r] \ar[rd]_{\PHIT} & P'\ar[d]^{\PHI'}  & \\ 
& B &
}
\]
Denote the image of $\RT$ to $P'$ by $R'$.
Note that any singular fibers of $\PHI'$ are of the form $E_{1}+E_{2}$.

Let $\Ga'$ be a smooth fiber of $P'\to B$. 
We will show that $\Mult _{x} R'_{h}\leq r/2$ can be assumed to hold for all singular points $x$ of $R'_h$ on $\Ga'$. 
When $\KT$ is not cyclic, the length of $\KT$-orbits is not less than 2.
So there are at least two singular points of $ R'_{h}$ on $\Ga'$
which are analytically equivalent.
Therefore the multiplicity of the singular point is not bigger than $r/2$.
Assume that $\KT$ is cyclic. 
If $R'_{h}$ has a singular point on a $\KT$-orbit of length $\Sh\KT$, then 
there are at least two singular points of $R'_{h}$ on $\Ga'$ and we can argue as in the previous case.
So we assume that the singular point of $R'_{h}$ of multiplicity greater than $r/2$ is fixed by $\KT$.
We perform the elementally transformation at the fixed point.
Then $\KT$ acts naturally on the new fiber and one can check easily that the multiplicity of the new branch locus at the resulting fixed point is less than $r/2$ (cf. Lemma~~3.1 of \cite{Eno}).

Let $\PHIB:\PB \to B$ be the model obtained from $\PHI' :P' \to B$ as above, and let $\RB$ be the image of $\RT$ to $\PB$.
Note that the multiplicity of any singular point of $\RB_{h}$ on smooth fibers is at most $r/2$. 
Recall that any singular fiber of $\PHIB$ is of the form $E_{1}+E_{2}$, and $\KT$ can switch 
$E_1$ and $E_2$.
Hence the (singular) points of $\RB_h$ on $E_1$ and on $E_2$ which are switched by $\KT$ are analytically equivalent.
In particular, $\overline{R}_hE_2$ has to coincide with $\overline{R}_hE_1$. 
Then, from $\overline{R}_h(E_1+E_2)=r$, we get $\overline{R}_hE_1=\overline{R}_hE_2=r/2$.
Therefore, we get the desired model $\PHI: P \to B$ after contracting one of $E_{1}$ and $E_{2}$.
\end{proof}


In what follows, we consider the model $\PHIB:\PB \to B$ introduced in the above lemma. 
We denote by $\KB$ the subgroup of $\Aut(\PB/B)$ induced by $\KT$.
We remark the following which has been shown in the proof of the above lemma.

\begin{lem}
\label{badhyoujyunnka}
Assume that $\Delta\neq \emptyset$ and let $\Ga_{p}$ be a fiber over $p \in \Delta $.
Then there exists a singular point of $R_{h}$ of multiplicity $r/2$ on $\Ga_{p}$.
\end{lem}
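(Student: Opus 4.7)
The plan is to extract the conclusion directly from the construction of $\PHIB : \PB \to B$ carried out in the proof of Lemma~\ref{hyoujyunka} --- which is exactly why the author remarks that the statement has already been shown there. First I would recall from that construction that $p$ lies in $\Delta$ precisely when the equivariant contraction algorithm terminates over $p$ at a singular fiber of the form $E_1 + E_2$ with $E_1 E_2 = 1$, on which $\KB$ interchanges the two $(-1)$-components; and that $\PHI : P \to B$ is then obtained from $\PB$ by contracting one of these two curves, say $E_2$, to a point $x \in \Ga_p$.

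The key step is to compute the intersection numbers $\RB_h \cdot E_i$. Since $\KB$ preserves $\RB$ together with its horizontal part $\RB_h$ while swapping $E_1$ and $E_2$, we have $\RB_h E_1 = \RB_h E_2$. Combining this with $\RB_h(E_1 + E_2) = r$, which holds because $E_1 + E_2$ is numerically equivalent to a general fiber of $\PHIB$ and $\RB_h$ has relative degree $r$, forces
\[
\RB_h E_1 = \RB_h E_2 = \frac{r}{2}.
\]

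From there I would apply the standard blow-up formula to the single blow-down $\pi : \PB \to P$ contracting $E_2$ to $x$: writing $\pi^{*}R_h = \RB_h + m E_2$ with $m = \Mult_x R_h$ and using $E_2^{2} = -1$, we get $0 = \pi^{*}R_h \cdot E_2 = \RB_h E_2 - m$, whence $\Mult_x R_h = r/2$. Since $r \geq 3n \geq 6$ (this $r \geq 3n$ assumption is explicitly in force since the discussion of $G$), this multiplicity is strictly larger than $1$, so $x$ is a genuine singular point of $R_h$ lying on $\Ga_p$, giving the lemma. I do not expect any real obstacle here: the content is already buried in the proof of Lemma~\ref{hyoujyunka}, and the only thing one needs to make explicit is the passage from the intersection number $\RB_h \cdot E_2$ on $\PB$ to the local multiplicity of $R_h$ at the image point $x$ on $P$, which is immediate from the blow-up formula.
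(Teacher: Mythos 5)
Your proposal is correct and follows exactly the route the paper intends: the lemma is stated without a separate proof because the identity $\RB_h E_1=\RB_h E_2=r/2$ is derived at the end of the proof of Lemma~\ref{hyoujyunka}, and your blow-down computation $\Mult_x R_h=\RB_h E_2$ is just the standard step the paper leaves implicit. Since $r\geq 3n\geq 6$, the image point is indeed a singular point of $R_h$ of multiplicity $r/2$ on $\Ga_p$, as required.
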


\begin{lem}
\label{dihedral}
Assume $\Delta\neq \emptyset$. 
Then $\KB$ is either dihedral or $\KB\cong \Z_2$.
\end{lem}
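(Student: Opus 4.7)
The plan is to study the tangent representation of $\KB$ at the node of a singular fiber, exploiting the fact that every element of $\KB$ preserves the structure map $\PHIB$ locally.

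Fix $q\in\Delta$ and let $E_{1}+E_{2}$ be the singular fiber over $q$, with node $z:=E_{1}\cap E_{2}$. Since $\PB$ is smooth at $z$ and the two branches $E_{1},E_{2}$ meet transversally, one may choose local analytic coordinates $(x,y)$ centred at $z$ with $E_{1}=\{y=0\}$, $E_{2}=\{x=0\}$ and $\PHIB^{\ast}t=xy$ for a local parameter $t$ on $B$ at $q$ (first put the two branches on the coordinate axes, then absorb the residual unit into one of the coordinates). Every $g\in\KB\subset\Aut(\PB/B)$ then preserves the function $xy$ and fixes $z$, the unique singular point of the fiber.

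Next I consider the tangent representation $\rho:\KB\to \mathrm{GL}(T_{z}\PB)\cong \mathrm{GL}_{2}(\C)$. Preservation of $xy$ to second order forces each $\rho(g)$ to preserve the symmetric bilinear form with matrix $\bigl(\begin{smallmatrix}0 & 1\\ 1 & 0\end{smallmatrix}\bigr)$, so $\rho(\KB)$ is contained in the orthogonal group $\mathrm{O}(2,\C)$ of this form. A direct calculation shows that $\mathrm{O}(2,\C)$ consists of the diagonal matrices $\mathrm{diag}(\lambda,\lambda^{-1})$ together with the off-diagonal matrices $\bigl(\begin{smallmatrix}0 & \mu\\ \mu^{-1} & 0\end{smallmatrix}\bigr)$, and crucially every off-diagonal element squares to the identity. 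Moreover $\rho$ is injective: by the standard linearization of finite-order holomorphic automorphisms at a fixed point, any element of $\KB$ with trivial differential at $z$ must be locally, and hence globally, the identity.

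By Lemma~\ref{hyoujyunka} the group $\KB$ contains an element $\kappa$ exchanging $E_{1}$ and $E_{2}$, so $\rho(\kappa)$ is off-diagonal. The finite subgroups of $\mathrm{O}(2,\C)$ containing an off-diagonal element are precisely the dihedral groups $D_{2l}$ for $l\geq 1$: the intersection with the diagonal torus is a cyclic subgroup $\Z_{l}$, and any off-diagonal involution conjugates a generator of that cyclic subgroup to its inverse. The case $l=1$ degenerates to $\Z_{2}$ and $l\geq 2$ gives genuine dihedral groups, proving the stated dichotomy. The main technical point is the simultaneous local normalisation of the two branches and of the fibration map into the monomial form $xy$; once this is in place the argument becomes a purely representation-theoretic exercise together with the classification of finite subgroups of $\mathrm{O}(2,\C)$.
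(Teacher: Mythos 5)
Your proof is correct, but it takes a genuinely different route from the paper's. The paper argues group-theoretically and globally: it introduces the index-two subgroup $N\subset\KB$ of elements preserving each component $\Ov{\Ga}_1,\Ov{\Ga}_2$, observes that $N$ is cyclic because it acts on $\Ov{\Ga}_i\cong\PROJ^1$ fixing the node, concludes that $\KB$ is cyclic or dihedral, and then excludes the case ``cyclic with $N\neq 1$'' by a global fixed-curve argument: the fixed curve of $\KB$ would meet each $\Ov{\Ga}_i$ once at the node, hence after blowing down to $P$ the fixed curve of $N$ would be tangent to the fiber, forcing $N$ to act trivially. Your argument replaces all of this by a purely local computation at the node: after normalizing $\PHIB^{\ast}t=xy$, the condition $g^{\ast}(xy)=xy$ pins the differential $\rho(g)$ down to $\mathrm{diag}(\lambda,\lambda^{-1})$ or an anti-diagonal matrix with entries $\mu,\mu^{-1}$, and the latter are automatically involutions; combined with injectivity of $\rho$ (Cartan linearization) and the fact that $\kappa_{sw}$ must be anti-diagonal, the classification of finite subgroups of $\mathrm{O}(2,\C)$ gives the dichotomy at once. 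The essential point your approach isolates --- and which substitutes for the paper's global tangency argument --- is the eigenvalue relation $\lambda\mu=1$ coming from preserving the function $\PHIB^{\ast}t$ itself rather than merely the fiber as a set; without it the anti-diagonal elements would only square to scalars and the cyclic case could not be excluded. Your version is more elementary and self-contained; the paper's version avoids any appeal to linearization but needs the auxiliary claim that $N$ is cyclic and the descent to $P$.
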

\begin{proof}
Let $\Ov{\Gamma}_{p}$ be the fiber of $\Ov{\varphi}$ over $p \in \Delta $.
On the model $\overline{P}$, $\GAB_{p}$ consists of two $(-1)$-curves $\Ov{\Gamma}_{i}(i=1, 2)$ with $\Ov{\Gamma}_1\Ov{\Gamma}_2=1$ 
 and there exists an element $\kappa_{sw} \in \KB$ such that 
$\kappa_{sw}(\Ov{\Ga}_{1})=\Ov{\Ga}_{2}$, $\kappa_{sw}(\Ov{\Ga}_{2})=\Ov{\Ga}_{1}$
from Lemma~\ref{hyoujyunka}. 

We consider the subgroup 
\begin{align*}
N:=\{\kappa \in \KB \mid \kappa(\Ov{\Ga}_{i}) = \Ov{\Ga}_{i}\quad (i=1, 2)\}.
\end{align*}
For any $\kappa' \in \KB$, we have $\kappa_{sw} \kappa'\in N$ if $\kappa' \not\in N$.
Hence, $\KB=N\cup \kappa_{sw} N$. It follows that $N$ is a normal subgroup of $\KB$ with the quotient group 
$\KB/N\cong \Z_2$.
Since $N$ acts on $\Ov{\Ga}_{i}\cong \PROJ ^1$ and fixes $z_0\in \Ov{\Ga}_1 \cap \Ov{\Ga}_{2}$, we see that $N$ is a cyclic group.
Hence $\KB$ is cyclic or dihedral.

We claim that $\KB$ is either dihedral or $\KB\cong \Z_2$.
We assume that $\KB$ is cyclic and $N$ is not the unit group, and show that this eventually leads us to a contradiction.
Let $\CB_{\KB}$ be the fixed curve of $\KB$ on $\PB$, i.e., the curve traced out by fixed points of $\KB$ on fibers.
Since $\kappa_{sw}$ fixes $\CB_{\KB}$ and the fixed point of $\kappa_{sw}$ on $\Ov{\Ga}_{p}$ 
is only $z_0$, we see that $\CB_{\KB}$ passes through $z_0$.
Since $\CB_{\KB}\Ov{\Ga}_{p}=2$ and $\kappa_{sw}(\Ov{\Ga}_1)=\Ov{\Ga}_2$, we have $\CB_{\KB}\Ov{\Ga}_i=1$ for $i=1,2$.
In particular, $\CB_{\KB}$ is smooth at $z_0$.
Since $N$ is not the unit group, the fixed curve $\CB_{N}$ of $N$ must coincides with $\CB_{\KB}$.
We blow $\overline{P}$ down to $P$. Then the action of $N$ descends down to $P$ locally around $\Ga_{p}$ and the fixed curve on $N$ on $P$ is tangent to $\Ga_{p}$.
This means, however, that $N$ acts trivially on $P$, contradicting that it is not the unit group. 
Hence $\KB$ is dihedral.

\end{proof}

\begin{lem}
\label{Rhassing}
Assume $\Delta\neq \emptyset$. 
Then $\sharp \Delta$ is even.
Furthermore, if $\Ov{K}$ is dihedral, one of the followings holds:
\begin{itemize}
\item[(1)] The branch locus $\Ov{R}$ has a singular point.
\item[(2)] The morphism $\Ov{\varphi}|_{\Ov{R}_{h}}:\Ov{R}_{h} \to B$ has a ramification point.
\end{itemize}
\end{lem}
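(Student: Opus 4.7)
For the parity of $\sharp\Delta$, my plan is to study an involution $\kappa_{sw} \in \KB$ that swaps the two $(-1)$-curves $E_1, E_2$ on each reducible fiber of $\PHIB$; such an element of order $2$ exists as any element of $\KB \setminus N$ in the dihedral case (or as the non-trivial element when $\KB \cong \Z/2$) guaranteed by Lemma~\ref{dihedral}. In local coordinates $(u,v)$ at $z_0 = E_1 \cap E_2$ with $E_1 = \{v=0\}$, $E_2 = \{u=0\}$, and $\PHIB$ given by $(u,v)\mapsto uv$, the differential of $\kappa_{sw}$ at $z_0$ swaps $u$ and $v$, so its fixed directions are $(1,\pm 1)$; the smooth one-dimensional fixed locus $\overline{C}$ therefore passes through $z_0$ transversally to each $E_i$. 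Consequently $\overline{C}\cdot\GAB_p = 2$ is concentrated at $z_0$ for every $p\in\Delta$ (the only $\kappa_{sw}$-fixed point on $\GAB_p$), while on each smooth fiber $\GAB_q\cong\PROJ^1$ the involution has two distinct fixed points. A direct local computation shows that $\PHIB|_{\overline{C}}:\overline{C}\to B$ is a degree-$2$ cover ramified exactly above the points of $\Delta$, and the standard Riemann--Hurwitz/double-cover analysis forces the number of branch points, and hence $\sharp\Delta$, to be even.

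For the second assertion, I argue by contradiction: assume $\KB$ is dihedral, $\RB$ is smooth, and $\PHIB|_{\RB_h}$ is unramified. I first rule out $\RB_h$ passing through any node $z_0 = E_1\cap E_2$ with $p\in\Delta$. Any branch of $\RB_h$ at $z_0$ is horizontal, so its tangent $(a,b)$ satisfies $ab\neq 0$; a smooth local parametrisation of a single branch gives $\PHIB|_{\RB_h}: t\mapsto ab\,t^{2}+O(t^{3})$, ramified of index $2$ at $z_0$ and contradicting our hypothesis, while two or more branches at $z_0$ produce a singularity of $\RB_h\subset\RB$. Furthermore, $\kappa_{sw}$-equivariance forces $\RB_v(p)$ to be a non-negative multiple of $\GAB_p = E_1+E_2$; if this multiple is positive, $\RB\supset E_1+E_2$ is singular at $z_0$, again contradicting smoothness of $\RB$. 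Hence $\RB_v$ vanishes near $\GAB_p$ and $\RB_h\cap E_i$ consists of $r/2$ distinct points in $E_i\setminus\{z_0\}$.

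Now I exploit the dihedral structure $\KB = N \sqcup \kappa_{sw}N$ from the proof of Lemma~\ref{dihedral}. Since $\overline{C}_N = \overline{C}_{\KB}$ meets each $E_i$ only at $z_0$ (with multiplicity $1$), the second fixed point of the cyclic $N$-action on $E_i\cong\PROJ^1$ is an isolated $N$-fixed point $z_1^{(i)}\in E_i$ off $\overline{C}_N$. The $N$-action on $\RB_h\cap E_i$ then yields the divisibility $|N| \mid r/2 - \varepsilon_i$ with $\varepsilon_i\in\{0,1\}$ according as $z_1^{(i)}\notin\RB_h$ or $z_1^{(i)}\in\RB_h$, and $\kappa_{sw}$-equivariance forces $\varepsilon_1=\varepsilon_2$. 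On a general fiber $\GAB_q$ the dihedral group $\KB$ acts on $\GAB_q\cong\PROJ^1$ with orbit sizes drawn from $\{2,|N|,|N|,2|N|\}$ by Table~\ref{subgroup}, so $\RB_h\cap\GAB_q$ decomposes into $\KB$-orbits totalling $r$. Combining these local divisibilities at each $p\in\Delta$ with the orbit-type decomposition on a general fiber, together with the global degree relation $r = \RB_h\cdot\GAB$, is expected to produce an impossible numerical condition, the desired contradiction.

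The main obstacle is this final combinatorial step: extracting the numerical impossibility from the congruences $|N|\mid r/2-\varepsilon$ coming from the isolated $N$-fixed points $z_1^{(i)}$ together with the orbit-decomposition on a general fiber. A clean route is to apply Riemann--Hurwitz to the Galois cover $\RB_h\to\RB_h/\KB$ after identifying all $\KB$-fixed points of $\RB_h$ (those in $\overline{C}_{\KB}\cap\RB_h$ and the isolated $z_1^{(i)}$'s), and to compare the resulting Euler-characteristic identity with the known degree $r$ of $\RB_h\to B$.
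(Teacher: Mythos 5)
Your parity argument is sound and is essentially the paper's argument seen from upstairs: the paper forms the quotient $P^{\dagger}=\Ov{P}/\langle\kappa_{sw}\rangle$, which is geometrically ruled because $\kappa_{sw}$ switches the two components of every fiber over $\Delta$, writes the branch curve as $B^{\dagger}\equiv 2C_{0}^{\dagger}+b\Gamma^{\dagger}$ and gets $\sharp\Delta=(K_{\varphi^{\dagger}}+B^{\dagger})B^{\dagger}=2(b-e)$ by adjunction; this is the same identity your Riemann--Hurwitz computation on the fixed bisection $\Ov{C}$ (the preimage of $B^{\dagger}$) yields. One small correction to your setup for part (2): in the dihedral case the fixed curve $\Ov{C}_{N}$ does \emph{not} pass through the node $z_{0}$; there $z_{0}$ is an isolated fixed point of $N$ and $\Ov{C}_{N}$ meets $E_{i}$ at the other fixed point $z_{1}^{(i)}$. (If $\Ov{C}_{N}$ met both $E_{i}$ transversally at $z_{0}$, blowing one $E_{i}$ down would make the fixed curve of $N$ tangent to the fiber of $P$ and force $N$ to act trivially --- exactly the contradiction used in the proof of Lemma~\ref{dihedral}; the statement you import from that proof was established only under the hypothesis, refuted there, that $\Ov{K}$ is cyclic, and $\Ov{C}_{\Ov{K}}$ is in fact empty for dihedral $\Ov{K}$.) This slip does not invalidate the congruence $|N|\mid r/2-\varepsilon_{i}$ you extract, so the local analysis up to that point survives.

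The genuine gap is the final step, which you yourself flag as ``expected'': the congruences $|N|\mid r/2-\varepsilon$ together with the decomposition of $r$ into $\Ov{K}$-orbit sizes from $\{2,|N|,2|N|\}$ on a general fiber do \emph{not} produce a contradiction --- they are simultaneously satisfiable for essentially any $r$ and $|N|$ (take $\varepsilon=0$ and $|N|\mid r/2$, say). The obstruction is not a counting congruence but a global intersection-theoretic one, and this is what the paper supplies on $P^{\dagger}$: unramifiedness of $\Ov{\varphi}|_{\Ov{C}_{N}}$ gives $C_{N}^{\dagger}B^{\dagger}=0$, which pins down $b=2e$ and $C_{N}^{\dagger}\equiv C_{0}^{\dagger}$ (using irreducibility of $B^{\dagger}$); unramifiedness of $\Ov{\varphi}|_{\Ov{R}_{h}}$ gives $\Ov{R}_{h}\Ov{C}_{N}=0$ (or $(\Ov{R}_{h}-\Ov{C}_{N})\Ov{C}_{N}=0$), which determines the class of $R_{h}^{\dagger}=\pi_{*}\Ov{R}_{h}$; and then a direct computation shows $R_{h}^{\dagger}B^{\dagger}>0$, i.e.\ $\Ov{R}_{h}$ must meet the fixed locus of $\kappa_{sw}$, forcing a ramification point or a singular point --- the contradiction. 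Your proposed Riemann--Hurwitz for $\Ov{R}_{h}\to\Ov{R}_{h}/\Ov{K}$ could in principle be made to work, but only after one knows the genus of $\Ov{R}_{h}$, i.e.\ its numerical class, which is precisely the quotient-surface computation you are missing; as written the proof is incomplete at its decisive step.
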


\begin{proof}
Let $\Ov{\Gamma}_{p}$ be the fiber of $\Ov{\varphi}$ over $p \in \Delta $.
On the model $\overline{P}$, $\GAB_{p}$ consists of two $(-1)$-curves $\Ov{\Gamma}_{i}(i=1, 2)$ with $\Ov{\Gamma}_1\Ov{\Gamma}_2=1$ 
 and there is an element $\kappa_{sw} \in \KB$ such that 
$\kappa_{sw}(\Ga_{1})=\Ga_{2}$, $\kappa_{sw}(\Ga_{2})=\Ga_{1}$
from Lemma~\ref{hyoujyunka}. 
We consider the subgroup 
\begin{align*}
N:=\{\kappa \in \KB \mid \kappa(\Ga_{i}) = \Ga_{i}\quad (i=1, 2)\}.
\end{align*}
Let $\pi: \Ov{P} \to P^{\dagger}:= \Ov{P}/\langle\kappa_{\Rm{sw}}\rangle$ be the quotient map by the cyclic subgroup 
$\langle \kappa_{\Rm{sw}}\rangle  \subset \Ov{K}$ and $\varphi^{\dagger}:P^{\dagger} \to B$ be the natural ruling.
Note that $P^{\dagger}$ is a smooth projective surface.
Since $\kappa_{\Rm{sw}}$ switches irreducible components of an arbitrary fiber over $\Delta$,
 $\varphi^{\dagger}:P^{\dagger} \to B$ is a relatively minimal fibration.
 We denote by $C_{0}^{\dagger}$ and $\Gamma^{\dagger}$ a minimal section with $(C_{0}^{\dagger})^{2}=-e$ and a fiber of $\varphi^{\dagger}$, respectively.
Let $B^{\dagger} \subset P^{\dagger}$ be a branch locus of the double covering $\pi$.
We will show $B^{\dagger} \equiv 2C_{0}^{\dagger} +2e\Gamma^{\dagger}$,
where the symbol $\equiv$ means the numerical equivalence.
Put $B^{\dagger} \equiv 2C_{0}^{\dagger} +b\Gamma^{\dagger}$ with an integer $b$.
We have 
\[
(K_{\varphi^{\dagger}}+B^{\dagger})B^{\dagger}=2(b-e)=\sharp \Delta >0.
\]
Hence we have $b>e$ and $\sharp\Delta$ is even.

We assume $\Ov{K}$ is dihedral. 
Then $N$ is not the trivial group. 
Let  $\Ov{C}_{N}\subset  \Ov{P}$ be the curve fixed by $N$ and $C_{N}^{\dagger}:=\pi_{\ast}\Ov{C}_{N}$.
Put $C_{N}^{\dagger} \equiv C_{0}^{\dagger}+c\Gamma^{\dagger}$ with a non-negative integer $c$.
Since $\Ov{\varphi}|_{\Ov{C}_{N}}:{\Ov{C}_{N}} \to B$ is an \'etale morphism,
$C_{N}^{\dagger}$ and $B^{\dagger}$ has no intersection points.
Hence we have 
\[
0=(2C_{0}^{\dagger}+b\Gamma^{\dagger})(C_{0}^{\dagger}+c\Gamma^{\dagger})=2(c-e)+b.
\]
Since $0 \leq c$ and $e<b$, we have $e<b\leq 2e$.
In particular $e$ is a positive integer.
Combine that $B^{\dagger}$ is an irreducible curve, we see that $b=2e$ and $c=0$.
Hence we have 
$B^{\dagger} \equiv 2C_{0}^{\dagger} +2e\Gamma^{\dagger}$ and $C_{N}^{\dagger} \equiv C_{0}^{\dagger}$.

To derive a contradiction, we assume $\Ov{R}$ has no singular points and $\Ov{\varphi}|_{\Ov{R}_{h}}$
is an unramified morphism.
Let $R_{h}^{\dagger}:=\pi_{\ast}\Ov{R}_{h}$ and put $R_{h}^{\dagger} \equiv \frac{r}{2}C_{0}^{\dagger} +s\Gamma^{\dagger}$ with an integer $s$.
The assumption that $\Ov{\varphi}|_{\Ov{R}_{h}}:{\Ov{R}_{h}} \to B$ is an \'etale morphism
implies $\Ov{R}_{h} \Ov{C}_{N} =0$ if $\Ov{C}_{N} \not\subset \Ov{R}_{h}$ or 
$(\Ov{R}_{h}-\Ov{C}_{N})\Ov{C}_{N} =0$ if $\Ov{C}_{N} \subset \Ov{R}_{h}$.
If $\Ov{C}_{N} \not\subset \Ov{R}_{h}$, we have $s=re/2$.
If $\Ov{C}_{N} \subset \Ov{R}_{h}$, we have $s=(r-2)e/2$.
Hence we see that $R_{h}^{\dagger} B^{\dagger}>0$ if $B^{\dagger} \not \subset R_{h}^{\dagger}$
and $(R_{h}^{\dagger} -B^{\dagger} )B^{\dagger}>0$ if $B^{\dagger} \subset R_{h}^{\dagger}$.
 by simple calculations.
It contradicts to that $\Ov{\varphi}|_{\Ov{R}_{h}}$ is an \'etale morphism.
\end{proof}

We close the section by the following remark.

\begin{lem}
\label{loc.triv.lem}
Assume that $n\geq3$ and let $f:S\to B$ be a primitive cyclic covering fibration of type $(g, 0, n)$.
If $R_{h}$ is \'etale over $B$, then $f$ is locally trivial and $R$ is $\PHI$-horizontal.
\end{lem}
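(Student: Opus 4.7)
The plan is to show first that the \'etaleness of $R_{h}$ over $B$ forces $R$ to be entirely $\varphi$-horizontal (that is, $R_v = 0$), and then to deduce local triviality from the vanishing $K_f^2 = 0$ via Arakelov's theorem. The chief obstacle is the last step, where I must invoke an external positivity result to pass from $K_f^2 = 0$ to isotriviality; everything else reduces to the arithmetic constraint on multiplicities in Lemma~\ref{lem1.2}(1).

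For $R_v = 0$, I would argue by contradiction. \'Etaleness of $R_h \to B$ means $R_h$ is smooth and meets every fiber $\Gamma_p$ of $\varphi : P \to B$ transversally at exactly $r$ distinct points. Suppose $R_v \neq 0$. Since $\varphi$ is relatively minimal with irreducible smooth $\Bb{P}^1$-fibers, $R_v$ is a nonzero reduced sum of full fibers; pick one $\Gamma_p \subset R_v$. Then at each of the $r$ points $x \in R_h \cap \Gamma_p$, the branch divisor $R = R_h + R_v$ has a transverse node, so $\Mult_{x} R = 2$. However, Lemma~\ref{lem1.2}(1) forces every multiplicity blown up in the resolution $\Ti{\psi}: \Ti{P} \to P$ to belong to $n\Z_{\geq 1} \cup (n\Z_{\geq 1}+1)$, a set not containing $2$ when $n \geq 3$. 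This contradiction yields $R_v = 0$ and $R = R_h$ smooth. In particular $\Ti{\psi}$ is the identity, so $\Ti{P} = P$, $\Ti{R} = R$, $\Ti{S} = S$, and the classical cyclic cover $\tilde\theta: S \to P$ along the smooth divisor $R$ is itself smooth, making $f = \varphi \circ \tilde\theta$ a smooth fibration.

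To upgrade smoothness to local triviality I would apply Lemma~\ref{lem1.7}. Since no blowups occur, $\Al_{k} = 0$ for every $k \geq 1$ and $j_{1} = 0$, while \'etaleness of $R_h$ gives $\Al_{0}^{+} = 0$, and the absence of vertical components of $\Ti{R}_0$ gives $\Al_{0} = \Al_{0}^{+} = 0$. Substituting these into Lemma~\ref{lem1.7} yields $K_f^2 = 0$. Arakelov's theorem then forces $f$ to be isotrivial, since any relatively minimal non-isotrivial fibration of genus $g \geq 2$ has $K_f^2 > 0$. Being smooth and isotrivial, $f$ is locally trivial by definition.
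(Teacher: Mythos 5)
Your proof is correct, and its main geometric content coincides with the paper's: both arguments observe that every singular point of the branch locus must have multiplicity in $n\Z_{\geq 1}\cup(n\Z_{\geq 1}+1)$, hence multiplicity at least $3$ when $n\geq 3$, so the transverse double points that a vertical component of $R$ would create along the \'etale $R_h$ are impossible; this forces $R=R_h$ smooth, $\Ti{\psi}=\mathrm{id}$ and $\Ti{R}=R$. Where you diverge is the final step. The paper concludes directly that $R$ \'etale over $B$ gives $\chi_f=0$ (citing Enokizono's formula for $\chi_f$) and then invokes the Arakelov--Fujita criterion that $\chi_f=0$ characterizes locally trivial fibrations. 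You instead feed $\alpha_0=\alpha_k=j_1=0$ into Lemma~\ref{lem1.7} to get $K_f^2=0$, apply Arakelov's positivity theorem to conclude isotriviality, and combine this with the smoothness of $f$ (which you correctly justify: all fibers are irreducible $n$-cyclic covers of $\PROJ^1$ branched at $r$ distinct points, so $\Ti{f}$ is already relatively minimal and $S=\Ti{S}$) to get local triviality as defined in the paper. Both endings are standard one-line appeals; yours has the small advantage of staying entirely within the invariants the paper actually develops ($K_f^2$ rather than $\chi_f$, which is never introduced before this point), at the cost of needing the extra observation that $f$ is smooth, whereas the paper's $\chi_f=0$ route gives local triviality in one stroke.
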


\begin{proof}
Since $f$ is a primitive cyclic covering fibration of type $(g, 0, n)$,
the multiplicity of any singular point of $R$ is either in $n\Z$ or in $n\Z+1$.
From $n\geq3$, we conclude that $R$ has no double points. 
On the other hand, since $R_{h}$ is \'etale over $B$, $R_{h}$ is smooth and no fibers of $\varphi$ have a contact to $R_h$.
So we get $R=R_h$. Since $R$ is \'etale over $B$, we have $\chi_{f}=0$ (\cite{Eno}).
Thus, $f$ is locally trivial.
\end{proof}

\vspace{2\baselineskip}
\section{Estimations of the order with localized $K_{f}^2$}

Let $f:S\to B$ a primitive cyclic covering fibration of type $(g,0,n)$ with $r\geq 3n$ 
and assume that $f$ is not locally trivial.
We put
\begin{align}\label{eq(4.1)}
&\mu_{r,n}:=\frac{2n^{2}(r-1)}{(n-1)((n-1)r-2n)}.
\end{align}
Since $r=2(g+n-1)/(n-1)$, we can rewrite it as 
\begin{align*}
\mu_{g,n}=\left(1+\frac{1}{n-1}\right)^{2}\frac{2g+n-1}{g-1}
\end{align*}
as a function in $g,n$.

Let $K_{f}^{2}(\Ga_{p})$ be as in (\ref{localinv}).
We show that there exists a fiber $\Gamma_{p}$ such that 
\[
\frac{n\Sh \Ov{K}}{K_{f}^{2}(\Ga_{p})}\leq \mu_{r,n}
\]
or equivalently
\begin{align}
\label{eq(4.2)}
2n(r-1)K_{f}^{2}(\Ga_{p})&\geq (n-1)\left((n-1)r-2n) \right) \Sh \Ov{K}.
\end{align}
Let $\Ov{\varphi}:\Ov{P} \to B$ be the model as in Lemma~~\ref{hyoujyunka}
and let $\Ov{\Gamma}_{p}$ be a fiber satisfying $K_{f}^2(\Gamma_{p})>0$. 
We consider three cases separately:
\begin{itemize}
\item[4.1.] The first case. Assume that neither a singular point of $\Ov{R}$ or 
a ramification point of $\Ov{\varphi}|_{\Ov{R}_{h}}$ on $\Ov{\Gamma}_{p}$.
Note that $p \in \Delta$ in this assumption.
\item[4.2.] The second case. Assume that $\Ov{R}$ is smooth locally around $\Ov{\Gamma}_{p}$
and $\Ov{\varphi}|_{\Ov{R}_{h}}$ has a ramification point on $\Ov{\Gamma}_{p}$.
\item[4.3.] The third case. Assume that $\Ov{R}$ has a singular point on $\Ov{\Gamma}_{p}$.
\end{itemize}
Since the problem is local, we may consider $f:S\to B$ locally around $p$. 

\subsection{The first case}

\begin{prop}
\label{et.case}
Let the notation and the assumptions be as above.
Assume that neither a singular point of $\Ov{R}$ or 
a ramification point of $\Ov{\varphi}|_{\Ov{R}_{h}}$ on $\Ov{\Gamma}_{p}$.
Then we have 
\begin{align*}
 4nr(r-1)K_{f}^{2}(\Ga_{p}) =  \left((n^2-1)r^{2}-4n^{2}(r-1)\right)\sharp\Ov{K}.
\end{align*}
Furthermore,
\begin{align*}
 2n(r-1)K_{f}^{2}(\Ga_{p}) \geq (n-1)\left((n-1)r-2n\right) \sharp \Ov{K}
\end{align*}
if $\Ov{K} \cong \Z_{2}$.
\end{prop}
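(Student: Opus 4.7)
The plan is to work out the branch configuration on $\Ov{\Gamma}_p$ explicitly from Lemma~\ref{hyoujyunka} and then evaluate the localized formula~\eqref{localinv} directly.

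Since $p \in \Delta$, Lemma~\ref{hyoujyunka} yields $\Ov{\Gamma}_p = E_1 + E_2$ with two $(-1)$-curves meeting at a single node $z_0$, interchanged by some $\kappa_{\mathrm{sw}} \in \Ov{K}$, and $\Ov{R}_h \cdot E_1 = \Ov{R}_h \cdot E_2 = r/2$. Under the case~1 hypothesis these $r$ intersections consist of $r/2$ distinct transverse points on each $E_i$, disjoint from $z_0$, and $\Ov{R}$ has no vertical components above $p$. Contracting $E_2$ then collapses its $r/2$ branches of $\Ov{R}_h$ to a single point $q \in \Gamma_p$, producing one singular point of $R_h$ of multiplicity $r/2$. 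By Lemma~\ref{lem1.2} multiplicities must lie in $n\Z \cup (n\Z+1)$, so for $n \geq 3$ we have $r/2 \in n\Z$; write $r/2 = nk_0$ with $k_0 = r/(2n)$. The exceptional of the single blow-up resolving $q$ is thus not contained in the strict transform, no further blow-ups are required, and locally $\Ti{P} = \Ov{P}$ above $p$, so $\Ti{R}_v(p) = 0$.

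From this I would read off the singularity indices at $p$: $\alpha_{k_0}(\Gamma_p) = 1$, $\alpha_k(\Gamma_p) = 0$ for $k \neq k_0$, $k \geq 1$, $j_1(\Gamma_p) = 0$, and $\alpha_0(\Gamma_p) = \alpha_0^+(\Gamma_p) = 0$ (the $r$ intersections of $\Ti{R}_h$ with $\Ti{\Gamma}_p$ are all distinct transverse points). Substituting into~\eqref{localinv} gives
\begin{equation*}
K_f^2(\Gamma_p) \;=\; \frac{(n^2-1)\,k_0(r - nk_0)}{r-1} - n \;=\; \frac{(n^2-1)r^2 - 4n^2(r-1)}{4n(r-1)}.
\end{equation*}
The first asserted equality then follows by cross-multiplying, once the factor $\sharp \Ov{K}$ is accounted for through the orbit structure of $\Ov{K}$ on the $r$ branch points of $\Ov{R}_h$ on $E_1 \cup E_2$: the subgroup $N = \{\kappa \in \Ov{K} \mid \kappa(E_i) = E_i\}$ acts on each family of $r/2$ transverse points in orbits of size $\sharp N = \sharp \Ov{K}/2$, giving the compatibility $\sharp \Ov{K} \mid r$.

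For the ``furthermore'' part, substituting $\sharp \Ov{K} = 2$ reduces the desired inequality to
\begin{equation*}
(n^2-1)r^2 - 4n^2(r-1) \;\geq\; 4(n-1)\bigl((n-1)r - 2n\bigr),
\end{equation*}
which is routinely verified under $n \geq 3$ and $r \geq 3n$ by elementary polynomial estimation. The step requiring most care is the book-keeping of $\sharp \Ov{K}$ in the first equality: reconciling the $\Ov{K}$-orbit structure on the branch points of $E_1 \cup E_2$ with the geometry of the contraction $\Ov{P} \to P$ is the essential combinatorial ingredient. Once this is handled, the remaining work is mechanical substitution into \eqref{localinv} and a routine polynomial check.
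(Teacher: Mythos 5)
Your argument is essentially the paper's own proof: use Lemma~\ref{badhyoujyunnka} to locate the single singular point of multiplicity $r/2$ over $p\in\Delta$, observe $\alpha_0(\Ga_p)=j_1(\Ga_p)=0$ and $\alpha_{k_0}(\Ga_p)=1$ with $k_0=r/(2n)$, substitute into (\ref{localinv}) to get $4n(r-1)K_f^2(\Ga_p)=(n^2-1)r^2-4n^2(r-1)$ exactly, and then bring in $\sharp\Ov{K}$ through $\sharp\Ov{K}\leq r$ (resp.\ $2\sharp\Ov{K}\leq r$ when $\Ov{K}\cong\Z_2$) coming from the action of $\Ov{K}$ on the $r$ transverse intersection points. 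The one point to fix in your write-up is that this yields the \emph{inequality} $4nr(r-1)K_f^2(\Ga_p)\geq\left((n^2-1)r^2-4n^2(r-1)\right)\sharp\Ov{K}$ rather than the displayed equality (which would force $r=\sharp\Ov{K}$); this is also all the paper's proof establishes and all that Proposition~\ref{summarize} uses, and note that the cyclic subgroup $N\subset\Ov{K}$ may fix one of the $r/2$ points on each component, so one only gets $\sharp N\leq r/2$, not $\sharp\Ov{K}\mid r$ — which is still enough.
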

\begin{proof}
By $p \in \Delta$ and Lemma~\ref{badhyoujyunnka}, 
there exists one singular point of $R$ on $\Gamma_{p}$ of multiplicity $r/2$.
The assumption also implies that $\alpha_{0}(\Gamma_{p})=j_{1}(\Gamma_{p})=0$.   
By (\ref{localinv}), we have 
\begin{align*}
(r-1)K_{f}^{2}(\Ga_{p}) =  (n^2-1)\frac{r^{2}}{4n}-n(r-1).
\end{align*}
Since the assumption of $\Ov{R}$,
we have $r \geq \sharp \Ov{K}$.
If $ \Ov{K} \cong \Z_{2}$, we have $r \geq  2\sharp \Ov{K}$.
Noting that $\left((n^2-1)r^{2}-4n^{2}(r-1)\right) \geq (n-1)r \left((n-1)r-2n\right)$,
we have desired inequalities.
\end{proof}

\subsection{The second case}

\begin{prop}
\label{sm.case}
Let the notation and the assumptions be as above. 
If $\Ov{R}$ is smooth in a neighborhood of $\Ov{\Gamma}_{p}$ and is tangent to $\Ov{\Gamma}_p$, then 
it holds that 
\begin{align*}
2n(r-1)K_{f}^{2}(\Ga_{p})&\geq  (n-1)\left((n-1)r-2n) \right) \Sh \Ov{K}.
\end{align*}
In particular, $(\ref{eq(4.2)})$ holds true.
\end{prop}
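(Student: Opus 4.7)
The plan is to combine the localization formula (\ref{localinv}) with an orbit counting argument for the action of $\Ov{K}$ on the fiber $\Ov{\Ga}_p$.

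First, I apply Lemma \ref{addlem} taking $\Ha{P} = \Ov{P}$ in diagram (\ref{intermediate}). Since $\Ov{R}$ is smooth near $\Ov{\Ga}_p$, the local quantities $\Ha{\Al}_k(\Ga_p)_{\Ha{z}}$ and $j_1(\Ga_p)_{\Ha{z}}$ all vanish for $k \geq 1$, and $\Sh \Ov{R}_v(p) = 0$, so only the $\Al_0^+$ term and possibly one $\Ch{\Al}_{k_0}$ term survive. The latter is nonzero only when $p \in \Delta$, in which case Lemma \ref{badhyoujyunnka} provides exactly one singular point of $R$ of multiplicity $r/2$ on $\Ga_p$, contributing $\Ch{\Al}_{k_0}(\Ga_p) = 1$ with $k_0 \approx r/(2n)$; the associated coefficient $(n^2-1)k_0(r-nk_0)-n(r-1)$ is easily checked to be nonnegative under our standing hypothesis $r \geq 3n$. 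Hence it suffices to prove $2\Al_0^+(\Ga_p) \geq \Sh \Ov{K}$.

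To achieve this bound I use that $\Ov{R}_h \cap \Ov{\Ga}_p$ is $\Ov{K}$-stable, and each tangency point of order $e$ contributes $e-1$ to $\Al_0^+(\Ga_p)$. A $\Ov{K}$-orbit of length $\ell$ with common tangency order $e \geq 2$ therefore contributes $\ell(e-1)$. Enumerating the possibilities with the classification of finite subgroups of $\Aut(\Bb{P}^1)$ in Table \ref{subgroup}, together with the total intersection constraint $\Ov{R}_h \cdot \Ov{\Ga}_p = r$, one finds that either some generic orbit of length $\Sh \Ov{K}$ occurs and furnishes directly $\Al_0^+(\Ga_p) \geq \Sh \Ov{K}$, or else all tangency orbits are supported at points with nontrivial cyclic stabilizer, in which case the invariance of the local equation $u = v^e \cdot (\textrm{unit})$ under a stabilizer of order $s$ constrains $e$ to grow with $s$, making $\ell(e-1) \geq \Sh \Ov{K}/2$.

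The main obstacle is the case $p \in \Delta$, where Lemma \ref{dihedral} forces $\Ov{K}$ to be dihedral (or $\Z_2$) and $\Ov{\Ga}_p$ splits into two components $E_1, E_2$ interchanged by a switching element. One analyzes the index-two subgroup $N$ preserving each $E_i \cong \Bb{P}^1$, counts ramification along $E_i$ separately, and observes that the factor-of-two loss from $[\Ov{K}:N]=2$ is precisely absorbed by the positive $\Ch{\Al}_{k_0}$ term identified above. A short case-by-case verification along the five families of Table \ref{subgroup} then completes the proof.
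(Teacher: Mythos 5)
Your argument is correct and is essentially the paper's own proof: both reduce, via the vanishing of $\alpha_k(\Ga_p)$ $(k\geq 1)$ and $j_{1}(\Ga_p)$ in the localization formula, to the bound $2\alpha_0^{+}(\Ga_p)\geq \sharp\Ov{K}$, and both obtain it by orbit--stabilizer counting on the tangency points of $\Ov{R}_h$ with $\Ov{\Ga}_p$, using that a nontrivial cyclic stabilizer of order $s$ forces the tangency order to be at least $s$ (the paper phrases this via the local fixed section of the stabilizer rather than invariance of the local equation, but the resulting estimate $(\sharp\Ov{K}/s)(s-1)\geq\sharp\Ov{K}/2$ is identical). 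The only minor divergence is the nodal-fiber case $p\in\Delta$, where the paper restricts $\Ov{K}$ to $\Z_2$ or $D_4$ and reuses the argument of Proposition~\ref{et.case}, while you feed the extra $\Ch{\alpha}_{k_0}$ contribution of the multiplicity-$r/2$ point into the estimate; both routes work.
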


\begin{proof}
Note that we have $\alpha_k(\Ga_p)=0$ for $k\geq 1$ and $j_{1}(\Ga_{p})=0$.
Furthermore, $\alpha_0(\Ga_p)$ is nothing but the degree of the ramification divisor of $R\to B$ over $p$.
Let $\Ov{z}\in \Ov{R}$ be a point of contact of $\Ov{R}$ and $\Ov{\Gamma}_{p}$
and let $\Stab_{\Ov{K}}(\Ov{z}) := \{\kappa \in \Ov{K} \mid \kappa(\Ov{z})=\Ov{z} \}$ be
the stabilizer of $\Ov{K}$ at $\Ov{z}$.

\smallskip

Case~1.\;The fiber $\Ov{\Gamma}_{p}$ has no node at $\Ov{z}$.

(1-1)\;$\Stab_{\KB}(\Ov{z})$ is not the unit group.

We know that $\Stab_{\KB}(\Ov{z})$ is a cyclic group, since it has $\Ov{z}$ as a fixed point.
Note that the fixed curve (that is, the curve traced by the fixed points on fibers) of $\Stab_{\KB}(\Ov{z})$ is a section of $\Ov{\varphi}:\Ov{P}\to B$ locally around $\Ov{z}$ and is not a local analytic branch of $\Ov{R}$, 
since $\Ov{R}$ is smooth and tangent to $\Ov{\Gamma}_{p}$ at $\Ov{z}$.
Then the $\Stab_{\KB}(\Ov{z})$-orbit of any point on $\Ov{R}$ sufficiently close to 
$\Ov{z}$ consists of $\sharp \Stab_{\KB}(\Ov{z})$ distinct points. 
Since $\Ov{R}$ and $\Ov{\Gamma}_{p}$ are preserved by the action of $\Stab_{\KB}(\Ov{z})$, 
we get $(\Ov{R}_{h} , \Ov{\Gamma}_{p})_{\Ov{z}}\geq \sharp \Stab_{\KB}(\Ov{z})$, 
where $(\Ov{R}_{h} , \Ov{\Gamma}_{p})_{\Ov{z}}$ denotes the local intersection number of $\Ov{R}$ 
and $\Ov{\Gamma}_p$ at $z$.
Note that $(\Ov{R}_{h} , \Ov{\Gamma}_{p})_{\Ov{z}}$ is the ramification index of $\Ov{R}_{h}\to B$ at $z$ in the present case. 
It follows that $\Ov{z}$ contributes at least  $(\sharp \Stab_{\KB}(\Ov{z})-1)$ to $\alpha_0(\Ga_p)$ and so does any point in the $\KB$-orbit $\KB\cdot \Ov{z}$ of $\Ov{z}$.
Hence
\begin{align*}
\alpha_{0}(\Ga_{p})\geq \Sh (\KB\cdot z)( \sharp \Stab_{\KB}(\Ov{z})-1)=\frac{\Sh \KB}{\sharp \Stab_{\KB}(\Ov{z})}(\sharp \Stab_{\KB}(\Ov{z})-1)
\end{align*}
and we infer readily from (\ref{localinv}) that 
\begin{align*}
2nr(r-1)K_{f}^{2}(\Ga_{p})\geq 2(n-1)r\bigr( (n-1)r-2n \bigr)\frac{(\sharp \Stab_{\KB}(\Ov{z})-1)}{\sharp \Stab_{\KB}(\Ov{z})}\Sh \KB.
\end{align*}
Since $\sharp \Stab_{\KB}(\Ov{z})\geq 2$, we get the desired inequality.

\smallskip

(1-2)\; $\Stab_{\KB}(\Ov{z})$ is the unit group.

By the assumption, we have $\Sh(\KB\cdot{\Ov{z}})=\sharp \KB$,
which implies that there exist at least $\sharp \KB$ points of contact in $\Ov{R}\cap \Ov{\Gamma}_p$.
Since the ramification index of $\Ov{\varphi}|_{\Ov{R}_{h}}:\Ov{R}_{h}\to B$ at each point of contact is not less than $2$, 
we have $\alpha_0(\Ov{\Ga}_p)\geq (2-1)\Sh\Ov{K}=\Sh\Ov{K}$ and 
\begin{align*}
2nr(r-1)K_{f}^{2}(\Ga_{p})\geq 2(n-1)r( (n-1)r-2n)\Sh\Ov{K}
\end{align*}
from (\ref{localinv}), which is stronger than what we want.

\smallskip

Case~2.\;The fiber $\Ov{\Gamma}_{p}$ has  node at $\Ov{z}$.

If $\Ov{K}\not\cong D_{2l}\;(l \geq 3)$, there exists a singular point of $\Ov{R}$ on $\Ov{z}$.
Hence we may assume that $\Ov{K}\cong\Z_{2}$ or $D_{4}$ in this case.
So we can obtain the desired inequality as in the proof of Proposition~\ref{et.case}.

\end{proof}

\subsection{The third case}

\noindent

Let $\Ov{\varphi}:\Ov{P} \to B$ be the model as in Lemma~~\ref{hyoujyunka}.
Take a singular point $\Ov{z} \in \Ov{R}$ and 
let $\Ov{\Gamma}_{p}$ be a fiber of $\Ov{\varphi}$ passing through $\Ov{z}$.
We introduce some notations as follows for $\Ov{z}$.

\begin{itemize}
\item[(1)] We denote by $\Rm{L}(\Ov{R},\Ov{z})$ and $\Rm{L}(\Ov{R}_{h},\Ov{z})$ a set of local analytic branches 
of $\Ov{R}$ and $\Ov{R}_{h}$ at $\Ov{z}$, respectively.
\item[(2)] Let $\Rm{L}_{\Rm{tr}}(\Ov{R},\Ov{z})$ and $\Rm{L}_{\Rm{tr}}(\Ov{R}_{h},\Ov{z})$ be a subset of $\Rm{L}(\Ov{R},\Ov{z})$ and $\Rm{L}(\Ov{R}_{h},\Ov{z})$ which consists of local analytic branches that meet $\Ov{\Gamma}_{p}$ transversally at $\Ov{z}$, respectively.
\item[(3)] Let $\Rm{L}_{\Rm{ta}}(\Ov{R},\Ov{z})$ and $\Rm{L}_{\Rm{ta}}(\Ov{R}_{h},\Ov{z})$ be a subset of $\Rm{L}(\Ov{R},\Ov{z})$ and $\Rm{L}(\Ov{R}_{h},\Ov{z})$ which consists of local analytic branches that tangent $\Ov{\Gamma}_{p}$ at $\Ov{z}$, respectively.
\item[(4)] For a irreducible component $\Ov{\Gamma}_{i}$ of $\Ov{\Gamma}_{p}$ passing through $\Ov{z}$,
let $\Rm{L}_{\Rm{ta}}(\Ov{R},\Ov{\Gamma}_{i},\Ov{z})$ and $\Rm{L}_{\Rm{ta}}(\Ov{R}_{h},\Ov{\Gamma}_{i},\Ov{z})$ be a subset of $\Rm{L}(\Ov{R},\Ov{z})$ and $\Rm{L}(\Ov{R}_{h},\Ov{z})$ which consists of local analytic branches that tangent $\Ov{\Gamma}_{i}$ at $\Ov{z}$, respectively.
If $\Ov{\Gamma}_{i} \subset \Ov{R}$, we define $\Ov{\Gamma}_{i} \in \Rm{L}_{\Rm{ta}}(\Ov{R},\Ov{\Gamma}_{i},\Ov{z})$.
\end{itemize}

Then we have disjoint decompositions
$\Rm{L}(\Ov{R},\Ov{z})=\Rm{L}_{\Rm{tr}}(\Ov{R},\Ov{z}) \sqcup \Rm{L}_{\Rm{ta}}(\Ov{R},\Ov{z})$ and
$ \Rm{L}(\Ov{R}_{h},\Ov{z})=\Rm{L}_{\Rm{tr}}(\Ov{R}_{h},\Ov{z}) \sqcup \Rm{L}_{\Rm{ta}}(\Ov{R}_{h},\Ov{z})$
by the definition.
To show (\ref{eq(4.2)}),
we consider four conditions as follows for $\Ov{\Ga}_{p}$ on which $\Ov{R}$ has a singular point.
\begin{itemize}
\item[(C1)] There exists a singular point $\Ov{z} \in \Ov{R}$ on $\Ov{\Gamma}_{p}$ such that $\Rm{Stab}_{\Ov{K}}(\Ov{z})=\{\Rm{id}\}$.

\item[(C2)] The condition $(C1)$ is not hold and there exist a singular point $\Ov{z}$ of $\Ov{R}$ on $\Ov{\Gamma}_{p}$ and an irreducible component $\Ov{\Gamma}_{i}$ of $\Ov{\Gamma}_{p}$ passing through 
$\Ov{z}$ such that $\Rm{L}_{\Rm{ta}}(\Ov{R}_{h},\Ov{\Gamma}_{i},\Ov{z}) \neq \emptyset$.

\item[(C3)] Conditions $(C1)$ and $(C2)$ are not hold and there exist a singular point $\Ov{z} \in \Ov{R}$ on $\Ov{\Gamma}_{p} $
and a local analytic branch $\Ov{D} \in \Rm{L}_{\Rm{tr}}(\Ov{R}_{h},\Ov{z})$ such that $\Ov{D}$ has 
cusp at $\Ov{z}$.

\item[(C4)] Conditions $(C1)$, $(C2)$ and $(C3)$ are not hold.
It implies that $\Rm{L}_{\Rm{ta}}(\Ov{R}_{h},\Ov{z})=\emptyset$
and any local analytic branch in $\Rm{L}_{\Rm{tr}}(\Ov{R}_{h},\Ov{z})$ is smooth at $\Ov{z}$ for any 
singular point $\Ov{z} \in \Ov{R}$.
\end{itemize}

\begin{prop}
\label{C1}
Assume $(C1)$ is hold.
Then it holds that 
\begin{align*}
 (r-1)K_{f}^{2}(\Ga_{p}) \geq 
  \left(-\frac{(n-1)}{n}\left((n-1)r-2n \right)B_{n}+
 (n^2-1)(r-n)-(r-1)n\right)\Sh\KB 
\end{align*}
\end{prop}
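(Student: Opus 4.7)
The plan is to exploit (C1) via an orbit argument and then combine the result with Lemma~\ref{lowerbdofK}. First I would use (C1) to pick a singular point $\Ov{z}\in\Ov{R}$ on $\Ov{\Ga}_{p}$ with $\Rm{Stab}_{\Ov{K}}(\Ov{z})=\{\Rm{id}\}$. Since $\Ov{K}$ preserves $\Ov{R}$ and acts fiberwise on $\Ov{P}$, the orbit $\Ov{K}\cdot \Ov{z}$ consists of exactly $\sharp\Ov{K}$ distinct singular points of $\Ov{R}$ on $\Ov{\Ga}_{p}$, all of the same multiplicity type $nk$ or $nk+1$ for some integer $k\geq 1$. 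Pushing down along the birational morphism $\Ov{P}\to P$ (which contracts at most one $(-1)$-curve per fiber, and only over $\Delta$), these orbit members descend to at least $\sharp\Ov{K}$ distinct singular points of $R$ on $\Ga_{p}$ in the same multiplicity class, giving $\alpha_{k}(\Ga_{p})\geq \sharp\Ov{K}$.

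The next step is to apply Lemma~\ref{lowerbdofK}, which bounds $(r-1)K_{f}^{2}(\Ga_{p})$ from below by $\sum_{k'\geq 1}c_{k'}\alpha_{k'}(\Ga_{p})$, where
\begin{align*}
c_{k'}:=-\tfrac{n-1}{n}\bigl((n-1)r-2n\bigr)B_{n}+(n^{2}-1)k'(r-nk')-n(r-1).
\end{align*}
Under the standing numerical hypotheses on $g$ and $n$, every coefficient $c_{k'}$ that can appear in the sum is non-negative, so one may discard all terms except the one at $k'=k$ to obtain $(r-1)K_{f}^{2}(\Ga_{p})\geq c_{k}\sharp\Ov{K}$. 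It remains to verify $c_{k}\geq c_{1}$, since $c_{1}$ is precisely the coefficient claimed. The identity $c_{k}-c_{1}=(n^{2}-1)(k-1)\bigl(r-n(k+1)\bigr)$ reduces this to $r\geq n(k+1)$. Invoking Lemma~\ref{up.bd.of.mult} applied to $\Ov{z}$ (whose image lies on $R_{h}$ in the generic case), the multiplicity satisfies $nk+1\leq r/2$, and hence $n(k+1)\leq r/2+n\leq r$ by the assumption $r\geq 3n$; this yields the claim.

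The main obstacle I expect is the rigorous treatment of singular points of $\Ov{R}$ whose local branches include vertical components, since Lemma~\ref{up.bd.of.mult} is formulated for the $\PHI$-horizontal part $R_{h}$; one must verify that mixed horizontal/vertical singularities still enjoy an analogous bound. A related subtlety arises when $p\in\Delta$: the contraction $\Ov{P}\to P$ collapses a $(-1)$-curve of $\Ov{\Ga}_{p}$, so one must confirm that the orbit of $\Ov{z}$ still descends to $\sharp\Ov{K}$ genuinely distinct singular points of $R$ on $\Ga_{p}$ of the same multiplicity class, rather than colliding with the blown-down point. Checking the non-negativity of each $c_{k'}$ that actually occurs, which I expect to follow from the hypotheses $g \geq (3n-2)(n-1)/2$ (resp.\ $g\geq 21,16$ for $n=4,3$), is a further bookkeeping step that I would handle after the structural argument is in place.
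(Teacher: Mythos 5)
Your proposal is correct and follows essentially the same route as the paper: produce a free $\Ov{K}$-orbit of $\sharp\Ov{K}$ analytically equivalent singular points from (C1), feed $\alpha_{k}(\Ga_{p})\geq\sharp\Ov{K}$ into Lemma~\ref{lowerbdofK}, and observe that the coefficient there is minimized at $k=1$ (the paper leaves the last step implicit, while you verify it via $c_{k}-c_{1}=(n^{2}-1)(k-1)(r-n(k+1))\geq 0$). Your worry about descending the orbit to $P$ is moot, since $\alpha_{k}(\Ga_{p})$ by definition counts singular points of $R$ including infinitely near ones, so the points of $\Ov{R}$ on $\Ov{\Ga}_{p}$ contribute directly without any push-down.
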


\begin{proof}
By the assumption, there exists a singular point $\Ov{z} \in \Ov{R}$ on $\Ov{\Gamma}_{p}$
such that $\sharp(\Ov{K}\cdot z)=\sharp\Ov{K}$.
It implies that there exist at least $\Sh\KB$ singular points of $\Ov{R}$
which are analytically equivalent to $\Ov{z}$.
Hence we get
\begin{align*}
 &(r-1)K_{f}^{2}(\Ga_{p})\geq  \left(-\frac{(n-1)}{n}\left((n-1)r-2n \right)B_{n}+
 (n^2-1)(r-n)-(r-1)n\right)\sharp \Ov{K} 
\end{align*}
from Lemma~\ref{lowerbdofK}.
\end{proof}

\smallskip

\begin{prop}
\label{C2}
Assume $(C2)$ is hold and $r \geq 18$ if $n=3$.
Then it holds that 
\begin{align*}
 2nr(r-1)K_{f}^{2}(\Ga_{p}) \geq  (n-1)r\left((n-1)r-2n\right)\sharp \Ov{K}.
\end{align*}
\end{prop}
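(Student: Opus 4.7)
The plan is to exploit the $\Ov{K}$-orbit of the distinguished singular point $\Ov{z}$ provided by condition $(C2)$ and convert both the tangency and the orbit size into lower bounds on the local singularity indices $\alpha_0^+(\Ga_p)$ and $\alpha_k(\Ga_p)$, which can then be fed into Lemma~\ref{lowerbdofK} (and Lemma~\ref{addlem}). Since condition $(C1)$ fails, the cyclic stabilizer $\Stab_{\Ov{K}}(\Ov{z})$ has order $s\geq 2$, so the $\Ov{K}$-orbit of $\Ov{z}$ consists of exactly $\Sh\Ov{K}/s$ singular points of $\Ov{R}$, each analytically equivalent to $\Ov{z}$ and each carrying a tangent branch to a fiber component.

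First I would record a multiplicity bound at $\Ov{z}$. By Lemma~\ref{lem1.2} its multiplicity is $m\in n\Z_{\geq 1}\cup (n\Z_{\geq 1}+1)$, so $m\geq n$, and $\Ov{z}$ contributes $1$ to $\alpha_{[m/n]}(\Ga_p)$. Therefore the whole orbit gives $\sum_{k\geq 1}\alpha_k(\Ga_p)\geq \Sh\Ov{K}/s$. Next, the presence of the tangent branch $\Ov{D}\in \Rm{L}_{\Rm{ta}}(\Ov{R}_h,\Ov{\Gamma}_i,\Ov{z})$ forces the local intersection number $(\Ov{R}_h,\Ov{\Gamma}_i)_{\Ov{z}}$ to exceed the number of distinct branches in $\Rm{L}(\Ov{R}_h,\Ov{z})$, which translates, after the resolution $\Ti{\psi}$, into a strictly positive contribution to $\alpha_0^+(\Ga_p)$ at (or infinitely near) $\Ov{z}$. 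By analysing the action of the cyclic stabilizer on $\Rm{L}_{\Rm{ta}}(\Ov{R}_h,\Ov{\Gamma}_i,\Ov{z})$, a case split shows that either the tangent branch is fixed (so its intersection multiplicity with $\Ov{\Gamma}_i$ is a multiple of $s$, giving an $\alpha_0^+$ contribution of at least $s-1$), or its stabilizer-orbit has size $\geq 2$ (again giving at least one excess intersection per tangent branch), so in every case the orbit of $\Ov{z}$ supplies $\alpha_0^+(\Ga_p)\geq (s-1)\Sh\Ov{K}/s$.

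With these two bounds in hand, I would plug into Lemma~\ref{lowerbdofK} and Lemma~\ref{10/18.2} to deduce
\[
(r-1)K_f^2(\Ga_p)\;\geq\; C_n(r)\,\Sh\Ov{K},
\]
where $C_n(r)$ combines the coefficient of $\alpha_1$ in Lemma~\ref{lowerbdofK} (which is $(n^2-1)(r-n)-n(r-1)-\tfrac{n-1}{n}((n-1)r-2n)B_n$) with the $\alpha_0^+$ contribution weighted by $\tfrac{n-1}{n}((n-1)r-2n)$. The final step is the elementary inequality $2nC_n(r)\geq (n-1)r((n-1)r-2n)$, which holds for $n\geq 4$ directly from $B_n=2/n$ and for $n=3$ precisely when $r\geq 18$, matching the hypothesis.

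The main obstacle is the last case analysis together with the bookkeeping of where contributions appear on the chosen intermediate model $\Ha{P}$: when $\Ov{z}$ lies on a singular fiber (i.e.\ $p\in\Delta$), the swap element of $\Ov{K}$ in Lemma~\ref{dihedral} can exchange the two components $\Ov{\Gamma}_1,\Ov{\Gamma}_2$, so one must also handle tangent branches attached to the swapped component in tandem, which typically doubles the orbit contribution and is why the $n=3$ deficit $B_3=1/2<2/3$ can still be absorbed under $r\geq 18$. Tracking exactly which contributions live at $\Ov{z}$ itself versus at the infinitely near singular points produced by a tangent branch (and making sure they are not double-counted against the coefficients appearing in Lemma~\ref{addlem}) is the most delicate part of the argument.
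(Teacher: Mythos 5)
Your overall strategy --- converting the $\Ov{K}$-orbit of $\Ov{z}$ and the tangency into lower bounds on singularity indices and feeding them into the localization lemmas --- is the same as the paper's, but as sketched the argument does not close. The first problem is structural: you cannot add an $\alpha_0^+(\Ga_p)$ contribution on top of Lemma~\ref{lowerbdofK}, because that lemma is derived by already spending the $\alpha_0$ information through Lemma~\ref{10/18.1}; combining it with a separate $\alpha_0^+$ bound double-counts. The only route that keeps $\alpha_0^+$ visible is Lemma~\ref{addlem}, and there every singular point you count carries the larger penalty $-2\tfrac{n-1}{n}\left((n-1)r-2n\right)$ in place of $-B_n\tfrac{n-1}{n}\left((n-1)r-2n\right)$, together with the additional global penalty $-2\tfrac{n-1}{n}\left((n-1)r-2n\right)\sharp\Ha{R}_{v}(p)$. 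You flag the control of $\sharp\Ha{R}_{v}(p)$ as ``the most delicate part'' but never actually bound it; the paper devotes much of the proof to showing $\sharp\Ha{R}_{v}(p)\leq \sharp\Ov{\Gamma}_{p}+\sharp(\Ov{K}\cdot\Ov{z})$ or $\sharp\Ov{\Gamma}_{p}+2\sharp(\Ov{K}\cdot\Ov{z})$ according to whether $v=1$ or $v\geq 2$ and whether $\Ha{R}$ is singular at the separation points, and without such a bound the inequality cannot be verified.

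The second, decisive gap is quantitative: you count only one singular point per orbit point ($\sum_k\alpha_k(\Ga_p)\geq \sharp\Ov{K}/s$) and one unit of $\alpha_0^+$ excess per tangency. With only these inputs the positive terms do not absorb the $\sharp\Ha{R}_{v}(p)$ penalty when the stabilizer is small (for $s=2$, $n=3$ the balance per orbit point comes out negative, of order $-3r$). The paper's proof instead exploits that a branch tangent to $\Ov{\Gamma}_{i}$ of order $v$ produces a \emph{chain} of $v$ infinitely near singular points of multiplicity at least $m$ over each orbit point, plus a further point of multiplicity at least $u$ when $\Ha{R}$ remains singular at the separation point; the resulting multiplier ($(2v-1)+2\geq 3$ in the nodal case) in front of $(n^2-1)(r-n)-n(r-1)$ is exactly what makes the inequality close, and is where the hypothesis $r\geq 18$ for $n=3$ enters (via $3(n^2-1)(r-n)-n(r-1)-10\tfrac{n-1}{n}\left((n-1)r-2n\right)\geq 0$). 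The case $p\in\Delta$ with the swap element and the dichotomy between $\Ha{R}$ singular or smooth at the separation point also require genuinely separate estimates (including the $\tfrac{n^2-1}{4n}r^2$ contribution of the multiplicity-$r/2$ point at the node), which you mention but do not carry out.
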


\begin{proof}
Take $\Ov{z} \in \Ov{R}$ and $\Ov{\Gamma}_{1} \subset \Ov{\Gamma}_{p}$ such that  
$\Rm{L}_{\Rm{ta}}(\Ov{R}_{h},\Ov{\Gamma}_{1},\Ov{z})\neq \emptyset$.
Let $\Ov{D}_{1} \in \Rm{L}_{\Rm{ta}}(\Ov{R}_{h},\Ov{\Gamma}_{1},\Ov{z})$ be a local analytic branch 
that the proper transform of it and $\Ov{\Gamma}_{1}$ become to have no common tangent  
after the smallest number of blowing-ups over $\Ov{z}$ among $\Rm{L}_{\Rm{ta}}(\Ov{R}_{h},\Ov{\Gamma}_{1},\Ov{z})$.
Let $\Rm{Stab}_{\Ov{K}}(\Ov{\Gamma}_{1},\Ov{z})
:=\{\kappa \in \Rm{Stab}_{\Ov{K}}(\Ov{z})\mid \kappa(\Ov{\Gamma}_{1})=\Ov{\Gamma}_{1}\}$ and 
$\Ov{\Bb{D}}_{1}$ be the $\Rm{Stab}_{\Ov{K}}(\Ov{\Gamma}_{1},\Ov{z})$-orbit of $\Ov{D}_{1}$.
We introduce some notations as follows.
\begin{itemize}
\item $m_{\Ov{D}_{1}}:=\Rm{mult}_{\Ov{z}}\Ov{D}_{1}$
\item $m_{\Ov{\Bb{D}}_{1}}:=\Rm{mult}_{\Ov{z}}\Ov{\Bb{D}}_{1}$
\item $m:=\Rm{min}\{m' \mid m' \geq m_{\Ov{\Bb{D}}},\quad m' \in n\Z$ or $n\Z + 1\}$
\end{itemize}
Assume that we need $v$ times of blowing-ups until the proper transform of $\Ov{D}_{1}$ is not tangent to that of $\Ov{\Gamma}_{1}$.
Then we have
\begin{align*} 
(\Ov{D}_{1},\Ov{\Gamma}_{1})_{\Ov{z}} &=vm_{\Ov{D}_{1}}
+u_{\Ov{D}_{1}} \quad (1 \leq u_{\Ov{D}_{1}} \leq m_{\Ov{D}_{1}}),\\
(\Ov{\Bb{D}}_{1},\Ov{\Gamma}_{1})_{\Ov{z}} &=vm_{\Ov{\Bb{D}}_{1}}
+u_{\Ov{\Bb{D}}_{1}} \quad ( u_{\Ov{\Bb{D}}_{1}} \leq m_{\Ov{\Bb{D}}_{1}}),
\end{align*}
where $u_{\Ov{D}_{1}}$ and $u_{\Ov{\Bb{D}}_{1}}$ denotes the multiplicity 
of the proper transform of $\Ov{D}_{1}$ and $\Ov{\Bb{D}}_{1}$ at the point of contact to the exceptional curve of the $v$-th blowing-up, respectively.
Put $u:=\Rm{min}\{u' \mid u' \geq u_{\Ov{\Bb{D}}},\quad u' \in n\Z$ or $n\Z + 1\}$.

\smallskip

Case1.\;The fiber $\Ov{\Gamma}_{p}$ has no node at $\Ov{z}$.

We note that $\Rm{Stab}_{\Ov{K}}(\Ov{z})=\Rm{Stab}_{\Ov{K}}(\Ov{\Gamma}_{1},\Ov{z})$ in this case.
Put $\Ov{K} \cdot \Ov{z} := \{ \Ov{z}_{1}(:=\Ov{z}), \cdots , \Ov{z}_{\Sh(\Ov{K}\cdot \Ov{z})}\}$.
Let $\{\Ov{\Bb{D}}_{1},\cdots, \Ov{\Bb{D}}_{\sharp(\Ov{K}\cdot \Ov{z})}\}$ be the $\Ov{K}$-orbit of $\Ov{\Bb{D}}_{1}$ such that $\Ov{z}_{j} \in \Ov{\Bb{D}}_{j}$.
Since $\Ov{z}_{j} \in \Ov{\Bb{D}}_{j}$ is analytically equivalent to $\Ov{z}_{1} \in \Ov{\Bb{D}}_{1}$,
the proper transform of $\Ov{\Bb{D}}_{j}$ is not tangent to 
the proper transform of $\Ov{\Gamma}_{p}$ after $v$-times blowing-ups over $\Ov{z}_{j}$.
We denote by $\Ha{z}_{j}$ the intersection point of the proper transform $\Ov{\Bb{D}}_{j}$ and that of $\Ov{\Gamma}_{p}$ after $v$-times blowing-ups over $\Ov{z}_{j}$, respectively.
Let $\Ha{P}\to \Ov{P}$ be the composite of the above $v$-times blowing-ups at $\Ov{z}_{j}$ for $j=1,\cdots, \sharp (\Ov{K}\cdot z)$.
We denote by $\Ch{\psi}:\Ha{P}\to P$ the composite of the above $\Ha{P}\to \Ov{P}$ and $\Ov{P}\to P$.
Let $\Ha{R}$ be the branch locus on $\Ha{P}$ and
 $\Ha{\Gamma}_{p}$ be a fiber of $\Ha{\varphi}:=\varphi \circ \Ch{\psi}$ over $p$.
 
\smallskip

(1-1)\;The branch locus $\Ha{R}$ has a singular point $\Ha{z}_{1}$.

We note that $\Ha{R}$ has a singular point $\Ha{z}_{1}, \cdots ,\Ha{z}_{\sharp(\Ov{K}\cdot \Ov{z})}$ in this case.
There exist $\Sh(\KB\cdot \Ov{z})v$ singular points of $\Ov{R}$ 
(including infinitely near one) of multiplicity at least $m$
and $\sharp(\Ov{K}\cdot \Ov{z})$ singular points of multiplicity at least $u$.
Hence we have
\begin{align*}
 (r-1)K_{f}^{2}(\Ga_{p})
 &\geq  \frac{n-1}{n}\left((n-1)r-2n \right)\sum_{\Ha{z}\in \Ha{\Gamma}_{p}}\Al^{+}_{0}(\Gamma_{p})_{\Ha{z}} \\
& + \biggl( -2\frac{n-1}{n}\bigr((n-1)r-2n \bigl)
+(n^2 -1)\left[\frac{u}{n}\right]\left(r-n\left[\frac{u}{n}\right]\right)-n(r-1) \biggr)\sharp(\Ov{K}\cdot \Ov{z})\\
 &-2\frac{n-1}{n}\left((n-1)r-2n\right)\sharp\Ha{R}_{v}(p)\\
 &+\left((n^2 -1)\left[\frac{m}{n}\right]\left(r-n\left[\frac{m}{n}\right]\right) -n(r-1) \right)\sharp(\Ov{K}\cdot \Ov{z})v\\
 \end{align*}
from Lemma~\ref{addlem}.
Since $(\Ov{D}_{1},\Ov{\Gamma}_{p})_{\Ov{z}_{1}} \geq 2$,
we can check $\Al_{0}^{+}(\Gamma_{p})_{\Ha{z}_{1}}\geq (\Ov{\Bb{D}}_{1},\Ov{\Gamma}_{p})_{\Ov{z}_{1}} / 2$.
Hence we have 
\begin{align*}
 &(r-1)K_{f}^{2}(\Ga_{p}) \\
 &\geq \frac{n-1}{n}\left((n-1)r-2n \right)
\frac{(\Ov{\Bb{D}}_{1},\Ov{\Gamma}_{p})_{\Ov{z}_{1}}}{2} \sharp(\Ov{K}\cdot \Ov{z})\\
& + \biggl( -2\frac{n-1}{n}\bigr((n-1)r-2n \bigl)
+(n^2 -1)\left[\frac{u}{n}\right]\left(r-n\left[\frac{u}{n}\right]\right)-n(r-1) \biggr)\sharp(\Ov{K}\cdot \Ov{z}) \\
 &-2\frac{n-1}{n}\left((n-1)r-2n\right)\sharp\Ha{R}_{v}(p)
 +\left((n^2 -1)\left[\frac{m}{n}\right]\left(r-n\left[\frac{m}{n}\right]\right) -n(r-1) \right)\sharp(\Ov{K}\cdot \Ov{z})v.
 \end{align*}
We need to show 
\begin{align*}
2nr&\biggl\{ \frac{n-1}{n}\left((n-1)r-2n \right)
\frac{(\Ov{\Bb{D}}_{1},\Ov{\Gamma}_{p})_{\Ov{z}_{1}}}{2} \sharp(\Ov{K}\cdot \Ov{z})\biggr.\\
& + \biggl( -2\frac{n-1}{n}\bigr((n-1)r-2n \bigl)
+(n^2 -1)\left[\frac{u}{n}\right]\left(r-n\left[\frac{u}{n}\right]\right)-n(r-1) \biggr)\sharp(\Ov{K}\cdot \Ov{z}) \\
 &-2\frac{n-1}{n}\left((n-1)r-2n\right)\sharp\Ha{R}_{v}(p)
 \biggr.+\left((n^2 -1)\left[\frac{m}{n}\right]\left(r-n\left[\frac{m}{n}\right]\right) -n(r-1) \right)\sharp(\Ov{K}\cdot \Ov{z})v\biggl\}     \\
& -(n-1)r\left((n-1)r-2n\right)\sharp \Ov{K} \geq 0
 \end{align*}
in order to obtain the desired inequality. 
By $(\Ov{\Bb{D}}_{1},\Ov{\Gamma}_{p})_{\Ov{z}_{1}} \geq \sharp \Rm{Stab}_{\Ov{K}}(\Ov{z}_{1})$,
it is sufficient to show 
\begin{align*}
2nr&\biggl\{ \frac{n-1}{n}\left((n-1)r-2n \right)
\frac{(\Ov{\Bb{D}}_{1},\Ov{\Gamma}_{p})_{\Ov{z}_{1}}}{2} \biggr.\\
& -2\frac{n-1}{n}\left((n-1)r-2n\right)\left(\frac{\sharp\Ha{R}_{v}(p)}{\sharp(\Ov{K}\cdot \Ov{z})}+1\right) \\
& +\biggl( (n^2 -1)\left[\frac{u}{n}\right]\left(r-n\left[\frac{u}{n}\right]\right)-n(r-1) \biggr)
 \biggr.+\left((n^2 -1)\left[\frac{m}{n}\right]\left(r-n\left[\frac{m}{n}\right]\right) -n(r-1) \right)v\biggl\}     \\
& -(n-1)r\left((n-1)r-2n\right)(\Ov{\Bb{D}}_{1},\Ov{\Gamma}_{p})_{\Ov{z}_{1}} \geq 0.
 \end{align*}
It is equivalent to 
\begin{align*}
  2nr& \biggl\{ -2\frac{n-1}{n}\left((n-1)r-2n\right)\left(\frac{\sharp\Ha{R}_{v}(p)}{\sharp(\Ov{K}\cdot \Ov{z})}+1\right)\\
 &+ \biggl( (n^2 -1)\left[\frac{u}{n}\right]\left(r-n\left[\frac{u}{n}\right]\right)-n(r-1) \biggr) \\
& \biggr.+\left((n^2 -1)\left[\frac{m}{n}\right]\left(r-n\left[\frac{m}{n}\right]\right) -n(r-1) \right)v\biggl\}   \geq 0.
 \end{align*}
The number of irreducible components of $\Ha{R}_{v}(p)$ which appear in $\Ha{P} \to \Ov{P}$ is at least 
$2\sharp (\Ov{K}\cdot \Ov{z})$.
This can be seen as follows:
The number of blowing-ups to obtain each irreducible component of $\Ti{R}_{v}(p)$ is at least $n$
by Remark~\ref{basic}.
Thus, for each irreducible component of  $\Ha{R}_{v}(p)$
there exists a local analytic branch in $\Rm{L}(\Ov{R}_{h},\Ov{z})$ which is tangent to it.
By the definition of $\Ha{P} \to \Ov{P}$, any local analytic branch in $\Rm{L}(\Ov{R}_{h},\Ov{z})$ can be tangent to only exceptional curves of first and last blowing-ups at most.
Thus, it is at least $2\sharp (\Ov{K}\cdot \Ov{z})$.
It implies that 
\[
\sharp \Ha{R}_{v}(p) \leq
   \begin{cases}
\sharp \Ov{\Gamma}_{p}+ \sharp (\Ov{K}\cdot \Ov{z}) & (\Rm{if\;}v=1),\\
\sharp \Ov{\Gamma}_{p}+ 2\sharp (\Ov{K}\cdot \Ov{z}) & (\Rm{if\;}v\geq2). \\
 \end{cases}
\]
Since $\sharp (\Ov{K}\cdot \Ov{z})=2$ if $\sharp \Ov{\Gamma}_{p}=2$, we get 
\[
-\left( \frac{\sharp\Ha{R}_{v}(p)}{\sharp(\Ov{K}\cdot \Ov{z})}+1 \right) \geq
   \begin{cases}
    -3  & (\Rm{if\;}v = 1), \\
    -4  & (\Rm{if\;}v \geq 2).
     \end{cases}
\]
If $v\geq 2$, it is sufficient to show 
\begin{align*}
2nr&\biggl\{  \biggl( (n^2 -1)\left[\frac{u}{n}\right]\left(r-n\left[\frac{u}{n}\right]\right)-n(r-1) \biggr)\\
 &-8\frac{n-1}{n}\left((n-1)r-2n\right)
 \biggr.+\left((n^2 -1)\left[\frac{m}{n}\right]\left(r-n\left[\frac{m}{n}\right]\right) -n(r-1) \right)2\biggl\}  \geq 0
 \end{align*}
 to obtain the desired inequality.
 The left hand side of the above inequality can be considered as a quadratic function in $m$ and $u$ defined in the interval $[n , r/2]$.
Hence it suffices to consider its value at $m=u=n$.
 Therefore it is sufficient to show 
 \begin{align*}
 3(n^2-1)(r-n)-n(r-1) -8\frac{(n-1)}{n}\left((n-1)r-2n\right) \geq 0.
 \end{align*}
We can check it by a simple calculation if $n\geq4$ or $n=3$ and $r \geq 12$.
If $v=1$, we can show it similarly.

\smallskip

(1-2)\;The branch locus $\Ha{R}$ is smooth at $\Ha{z}_{1}$.

We note that $\Ha{R}$ is smooth at $\Ha{z}_{1}, \cdots, \Ha{z}_{\sharp(\Ov{K}\cdot \Ov{z})}$ 
and $\Ov{\Bb{D}}_{1}=\Ov{D}_{1}$ in this case.
There exist $\Sh(\KB\cdot \Ov{z})v$ singular points of $\Ov{R}$ 
(including infinitely near one) of multiplicity at least $m$.
Hence we have 
\begin{align*}
 (r-1)K_{f}^{2}(\Ga_{p})
 &\geq \frac{n-1}{n}\left((n-1)r-2n \right)
\frac{(\Ov{\Bb{D}}_{1},\Ov{\Gamma}_{p})_{\Ov{z}_{1}}}{2} \sharp(\Ov{K}\cdot \Ov{z})\\
 &-2\frac{n-1}{n}\left((n-1)r-2n\right)\sharp\Ha{R}_{v}(p) \\
 &+\left((n^2 -1)\left[\frac{m}{n}\right]\left(r-n\left[\frac{m}{n}\right]\right) -n(r-1) \right)\sharp(\Ov{K}\cdot \Ov{z})v
 \end{align*}
 from Lemma~\ref{addlem}.
We need to show 
\begin{align*}
2nr&\biggl\{ \frac{n-1}{n}\left((n-1)r-2n \right)
\frac{(\Ov{\Bb{D}}_{1},\Ov{\Gamma}_{p})_{\Ov{z}_{1}}}{2} \biggr.\\
 &-2\frac{n-1}{n}\left((n-1)r-2n\right)\frac{\sharp\Ha{R}_{v}(p)}{\sharp(\Ov{K}\cdot \Ov{z})}
 \biggr.+\left((n^2 -1)\left[\frac{m}{n}\right]\left(r-n\left[\frac{m}{n}\right]\right) -n(r-1) \right)v\biggl\} \\
& -(n-1)r\left((n-1)r-2n\right)(\Ov{\Bb{D}}_{1},\Ov{\Gamma}_{p})_{\Ov{z}_{1}} \geq 0
 \end{align*}
in order to obtain the desired inequality. 
It is equivalent to show 
\begin{align*}
2nr&\biggl\{  -2\frac{n-1}{n}\left((n-1)r-2n\right)\frac{\sharp\Ha{R}_{v}(p)}{\sharp(\Ov{K}\cdot \Ov{z})}
 \biggr.+\left((n^2 -1)\left[\frac{m}{n}\right]\left(r-n\left[\frac{m}{n}\right]\right) -n(r-1) \right)v\biggl\} \geq 0.
 \end{align*}
Since $\Ha{R}$ is smooth at $\Ha{z}_{j}$,
the exceptional curve of $v$-th blowing-up over $\Ov{z}_{j}$ is not contained in $\Ha{R}_{v}(p)$ 
for each $j$.
It implies that
\[
\sharp \Ha{R}_{v}(p) \leq
   \begin{cases}
\sharp \Ov{\Gamma}_{p}& (\Rm{if\;}v=1),\\
\sharp \Ov{\Gamma}_{p}+ \sharp (\Ov{K}\cdot \Ov{z}) & (\Rm{if\;}v\geq2), \\
 \end{cases}
\]
Since $\sharp (\Ov{K}\cdot \Ov{z})=2$ if $\sharp \Ov{\Gamma}_{p}=2$, we get 
\[
- \frac{\sharp\Ha{R}_{v}(p)}{\sharp(\Ov{K}\cdot \Ov{z})} \geq
   \begin{cases}
    -1  & (\Rm{if\;}v = 1), \\
    -2  & (\Rm{if\;}v \geq 2).
     \end{cases}
\]
Hence it suffices to show 
\begin{align*}
(n^2 -1)(r-n)-n(r-1)
-2\frac{n-1}{n}\left((n-1)r-2n\right) \geq 0.
\end{align*}
We can check it by a simple calculations.

\smallskip

Case2.\;$\Ov{\Gamma}_{p}$ has a node at $\Ov{z}$ which consists with two irreducible components 
$\Ov{\Gamma}_{1}$, $\Ov{\Gamma}_{2}$.

We recall that $\Ov{K}$ has an action $\kappa_{\Rm{sw}}$ 
which switches $\Ov{\Gamma}_{1}$ and $\Ov{\Gamma}_{2}$.
Put $\Ov{D}_{2}:=\kappa_{\Rm{sw}}(\Ov{D}_{1})$ and $\Ov{\Bb{D}}_{2}:=\kappa_{\Rm{sw}}(\Ov{\Bb{D}}_{1})$.
Since $\Ov{D}_{1}$ has a cusp at $\Ov{z}$, no action of $\Ov{K}$ stables $\Ov{D}_{1}$.
Thus, we have 
$(\Ov{K}\cdot \Ov{D},\Ov{\Gamma}_{p})_{\Ov{z}}=(\Ov{\Bb{D}}_{1},\Ov{\Gamma}_{p})_{\Ov{z}}+(\Ov{\Bb{D}}_{2},\Ov{\Gamma}_{p})_{\Ov{z}} \geq \sharp \Ov{K}$, 
where $\Ov{K}\cdot \Ov{D}$ denotes the $\Ov{K}$-orbit of $\Ov{D}$.
Hence we have $2(v+1)m_{\Ov{\Bb{D}}_{1}}+2u_{\Ov{\Bb{D}}_{1}}=2(\Ov{\Bb{D}}_{1},\Ov{\Gamma}_{p})_{\Ov{z}} \geq \sharp \Ov{K}$.
We denote by $\Ha{z}_{j}$ the intersection point of the 
proper transform of $\Ov{\Bb{D}}_{j}$ and that of $\Ov{\Gamma}_{j}$ 
after $v$-times blowing-ups over $\Ha{z}_{j}$ for each $j$.
Let $\Ha{P}\to \Ov{P}$ be the composite of the above ($2v-1$)-times blowing-ups. 

\smallskip

(2-1)\;The branch locus $\Ha{R}$ is singular at $\Ha{z}_{1}$ and $\Ha{z}_{2}$.

There exist one singular point of multiplicity $r/2$,
$(2v-1)$ singular  points of multiplicity at least $m$ 
and $2$ singular points of multiplicity at least $u$.
Hence we have 
\begin{align*}
 (r-1)K_{f}^{2}(\Ga_{p})  
&\geq \sum_{\Ha{z}\in\Ha{\Gamma}_{p}}\left(\frac{n-1}{n}\left((n-1)r-2n\right)\alpha^{+}_{0}(\Ga_{p})_{\Ha{z}}\right)\\
&+\frac{(n^2-1)}{4n}r^{2}-n(r-1) \\
&+\left( (n^2-1)\left[\frac{m}{n}\right](r-n\left[\frac{m}{n}\right])
-n(r-1)\right)(2v-1)\\
 &+2\left((n^2-1)\left[\frac{u}{n}\right](r-n\left[\frac{u}{n}\right])
-n(r-1)\right)\\
&-2\frac{n-1}{n}\left((n-1)r-2n\right)(\sharp \Ha{R}_{v}(p)+2).
\end{align*} 
from Lemma~~\ref{addlem}.
Since $(\Ov{D}_{1},\Ov{\Gamma}_{p})_{\Ov{z}} = (\Ov{D}_{2},\Ov{\Gamma}_{p})_{\Ov{z}} \geq 2$,
we have 
\begin{align*}
\sum_{\Ha{z}\in\Ha{\Gamma}_{p}} \Al_{0}^{+}(\Gamma_{p})_{\Ha{z}}
\geq \frac{(\Ov{\Bb{D}}_{1},\Ov{\Gamma}_{p})_{\Ov{z}}}{2} + \frac{ (\Ov{\Bb{D}}_{2},\Ov{\Gamma}_{p})_{\Ov{z}}}{2} = (\Ov{\Bb{D}}_{1},\Ov{\Gamma}_{p})_{\Ov{z}}.
\end{align*}
Hence we have 
\begin{align*}
 (r-1)K_{f}^{2}(\Ga_{p})  
&\geq \frac{n-1}{n}\left((n-1)r-2n\right)(\Ov{\Bb{D}}_{1},\Ov{\Gamma}_{p})_{\Ov{z}}
+\frac{(n^2-1)}{4n}r^{2}-n(r-1)\\
&+\left( (n^2-1)\left[\frac{m}{n}\right](r-n\left[\frac{m}{n}\right])
-n(r-1)\right)(2v-1)\\
 &+2\left((n^2-1)\left[\frac{u}{n}\right](r-n\left[\frac{u}{n}\right])
-n(r-1)\right)\\
&-2\frac{n-1}{n}\left((n-1)r-2n\right)(\sharp \Ha{R}_{v}(p)+2).
\end{align*} 
We need to show 
\begin{align*}
2nr& \biggl\{\frac{n-1}{n}\left((n-1)r-2n\right)(\Ov{\Bb{D}}_{1},\Ov{\Gamma}_{p})_{\Ov{z}} 
+\frac{(n^2-1)}{4n}r^{2}-n(r-1) \biggr.\\
&+\left( (n^2-1)\left[\frac{m}{n}\right](r-n\left[\frac{m}{n}\right])
-n(r-1)\right)(2v-1)\\
 &+2\left((n^2-1)\left[\frac{u}{n}\right](r-n\left[\frac{u}{n}\right])
-n(r-1)\right)\\
\biggl.&-2\frac{n-1}{n}\left((n-1)r-2n\right)(\sharp \Ha{R}_{v}(p)+2)\biggr\} \\
&-(n-1)r\left((n-1)r-2n\right)2(\Ov{\Bb{D}}_{1},\Ov{\Gamma}_{p})_{\Ov{z}} \geq 0
\end{align*} 
in order to obtain the desired inequalities.
It is equivalent to 
\begin{align*}
2nr& \biggl\{\frac{(n^2-1)}{4n}r^{2}-n(r-1) +\left( (n^2-1)\left[\frac{m}{n}\right](r-n\left[\frac{m}{n}\right])
-n(r-1)\right)(2v-1)\biggr.\\
 &+2\left((n^2-1)\left[\frac{u}{n}\right](r-n\left[\frac{u}{n}\right])
-n(r-1)\right)\\
\biggl.&-2\frac{n-1}{n}\left((n-1)r-2n\right)(\sharp \Ha{R}_{v}(p)+2)\biggr\}\geq 0.
\end{align*} 
We can check that 
\[
-\left( \sharp\Ha{R}_{v}(p)+2\right)\geq
   \begin{cases}
    -5  & (\Rm{if\;}v = 1), \\
    -7  & (\Rm{if\;}v \geq 2),
     \end{cases}
\]
by the similar argument in (1-1).
Hence it is sufficient to show 
\begin{align*}
3(n^2-1)(r-n)-n(r-1) -10\frac{(n-1)}{n}\left((n-1)r-2n\right) \geq 0.
\end{align*} 
We can show it by a simple calculation if $n\geq 4$ or $n=3$ and $r \geq 18$. 

\smallskip

(2-2)\;The branch locus $\Ha{R}$ is smooth at $\Ha{z}_{1}$ and $\Ha{z}_{2}$.

There exist one singular point of multiplicity $r/2$,
$(2v-1)$ singular  points of multiplicity at least $m$.
Hence we have 
\begin{align*}
 (r-1)K_{f}^{2}(\Ga_{p})  
&\geq \frac{n-1}{n}\left((n-1)r-2n\right)(\Ov{\Bb{D}}_{1},\Ov{\Gamma}_{p})_{\Ov{z}}
+\frac{(n^2-1)}{4n}r^{2}-n(r-1)\\
&+\left( (n^2-1)\left[\frac{m}{n}\right](r-n\left[\frac{m}{n}\right])
-n(r-1)\right)(2v-1)\\
&-2\frac{n-1}{n}\left((n-1)r-2n\right)\sharp \Ha{R}_{v}(p).
\end{align*} 
from Lemma~~\ref{addlem}.
We need to show 
\begin{align*}
2nr& \biggl\{\frac{n-1}{n}\left((n-1)r-2n\right)(\Ov{\Bb{D}}_{1},\Ov{\Gamma}_{p})_{\Ov{z}} 
+\frac{(n^2-1)}{4n}r^{2}-n(r-1) \biggr.\\
&+\left( (n^2-1)\left[\frac{m}{n}\right](r-n\left[\frac{m}{n}\right])
-n(r-1)\right)(2v-1)\\
\biggl.&-2\frac{n-1}{n}\left((n-1)r-2n\right)\sharp \Ha{R}_{v}(p)\biggr\} \\
&-(n-1)r\left((n-1)r-2n\right)2(\Ov{\Bb{D}}_{1},\Ov{\Gamma}_{p})_{\Ov{z}} \geq 0
\end{align*} 
in order to obtain the desired inequalities.
It is equivalent to 
\begin{align*}
2nr& \biggl\{\frac{(n^2-1)}{4n}r^{2}-n(r-1) +\left( (n^2-1)\left[\frac{m}{n}\right](r-n\left[\frac{m}{n}\right])
-n(r-1)\right)(2v-1)\biggr.\\
\biggl.&-2\frac{n-1}{n}\left((n-1)r-2n\right)\sharp \Ha{R}_{v}(p)\biggr\} \geq 0.
\end{align*} 
We can check that 
\[
- \sharp\Ha{R}_{v}(p)\geq
   \begin{cases}
    0  & (\Rm{if\;}v = 1), \\
    -1  & (\Rm{if\;}v \geq 2).
     \end{cases}
\]
by the similar argument in (1-1).
If $v \geq 2$, it is sufficient to show 
\begin{align*}
(n^2-1)(r-n)-n(r-1) -2\frac{(n-1)}{n}\left((n-1)r-2n\right) \geq 0.
\end{align*} 
We can show it by a simple calculation. 
\end{proof}

\begin{prop}
\label{C3}
Assume $(C3)$ is hold and $r \geq 12$ if $n=3$.
Then it holds that 
\begin{align*}
 2nr(r-1) K_{f}^{2}(\Ga_{p}) \geq 
  (n-1)r\left((n-1)r-2n\right) \sharp{\Ov{K}} .
\end{align*}
\end{prop}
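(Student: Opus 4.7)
The plan is to imitate the case analysis of Proposition~\ref{C2}, now exploiting the cuspidal branch guaranteed by (C3) to force additional infinitely near singular points of the branch locus that were unavailable under (C2). First I will pick a singular point $\Ov{z}\in\Ov{R}$ on $\Ov{\Gamma}_p$ together with a transversal cuspidal branch $\Ov{D}\in\Rm{L}_{\Rm{tr}}(\Ov{R}_h,\Ov{z})$, and let $\Ov{\Bb{D}}$ be the $\Stab_{\Ov{K}}(\Ov{z})$-orbit of $\Ov{D}$. Since (C1) fails, $\sharp\Stab_{\Ov{K}}(\Ov{z})\geq 2$; since $\Ov{D}$ has a cusp, $m_{\Ov{D}}\geq 2$; and since (C2) fails, no local analytic branch of $\Ov{R}_h$ at $\Ov{z}$ is tangent to any irreducible component of $\Ov{\Gamma}_p$ through $\Ov{z}$. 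Blowing up $\Ov{z}$ once, the cusp on $\Ov{D}$ forces its proper transform to meet the exceptional $(-1)$-curve at a single point $\Ha{z}_1$ with intersection number $m_{\Ov{D}}\geq 2$, creating an infinitely near singular point of $\Ov{R}$ at $\Ha{z}_1$; by symmetry this happens at every point in the $\Ov{K}$-orbit of $\Ov{z}$.

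Next, exactly as in the proof of Proposition~\ref{C2}, I will take $\Ha{\psi}\colon\Ha{P}\to\Ov{P}$ to be the composite of the minimal blowing-ups that simultaneously over every point in the $\Ov{K}$-orbit of $\Ov{z}$ resolve the cuspidal structure on $\Ov{\Bb{D}}$. I split into two cases according to whether $\Ov{\Gamma}_p$ is smooth at $\Ov{z}$ (Case~1) or has a node at $\Ov{z}$ (Case~2), and within each case further according to whether $\Ha{R}$ is still singular at $\Ha{z}_1$ (sub-case~a) or smooth there (sub-case~b). In each sub-case Lemma~\ref{addlem} yields a lower bound on $(r-1)K_f^2(\Ga_p)$ built out of three contributions: the ramification term $\tfrac{n-1}{n}((n-1)r-2n)\alpha_0^+(\Ga_p)_{\Ha{z}_1}\geq\tfrac{n-1}{2n}((n-1)r-2n)(\Ov{\Bb{D}},\Ov{\Ga}_p)_{\Ov{z}}$ summed over the orbit, the cusp contribution at infinitely near points of multiplicity at least $m$ (rounded up into $n\Z\cup(n\Z+1)$), and the negative term $-2\tfrac{n-1}{n}((n-1)r-2n)\sharp\Ha{R}_v(p)$. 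The quantity $\sharp\Ha{R}_v(p)$ is bounded exactly as in Proposition~\ref{C2} by invoking Remark~\ref{basic}.

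After collecting the inequalities, the target estimate reduces to a polynomial condition in $m,u\in[n,r/2]$ which is linear with non-negative coefficient in the blow-up depth $v\geq 1$, so its minimum is attained at $m=u=n$ and $v=1$. What remains is an elementary inequality in $n$ and $r$: for $n\geq 4$ it follows by a direct calculation, and for $n=3$ it holds exactly when $r\geq 12$, matching the stated hypothesis. The improvement over the threshold $r\geq 18$ required in Proposition~\ref{C2} stems from the fact that, because all branches of $\Ov{R}_h$ at $\Ov{z}$ are transversal to $\Ov{\Gamma}_p$ (as (C2) fails), the extraneous vertical components of $\Ha{R}$ created over $\Ov{z}$ are strictly fewer than in the tangential situation, weakening the negative $-\sharp\Ha{R}_v(p)$-term.

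The main obstacle is the combinatorial bookkeeping across the sub-cases, especially in Case~2, where the node of $\Ov{\Gamma}_p$ must be blown up in addition to the cuspidal infinitely near singularities: one has to avoid double counting the exceptional curves that get absorbed into $\Ha{R}$ against those that only contribute to $\alpha_0^+(\Ga_p)$, and to track precisely how the $\kappa_{\Rm{sw}}$-action pairs the cuspidal branches on the two components of $\Ov{\Gamma}_p$. I expect the tightest sub-case to be Case~2a with $n=3$ and $v=1$, where the verification of the numerical inequality should exactly force the stated lower bound $r\geq 12$.
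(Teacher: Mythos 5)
Your overall skeleton (blow up along the $\Ov{K}$-orbit of $\Ov{z}$, feed three contributions into Lemma~\ref{addlem}, bound $\sharp\Ha{R}_{v}(p)$, finish with a numerical inequality in $n,r$) is the same as the paper's, and your ramification term $\Al_{0}^{+}(\Ga_{p})_{\Ha{z}}\geq\tfrac{1}{2}(\Ov{\Bb{D}},\Ov{\Ga}_{p})_{\Ov{z}}$ per orbit point is exactly how the cusp is used: since $\Ov{D}$ is transversal to $\Ov{\Ga}_{p}$ (as (C2) fails), the cusp gives $(\Ov{D},\Ov{\Ga}_{p})_{\Ov{z}}=m_{\Ov{D}}\geq 2$, so $\Ov{z}$ contributes to $\Al_{0}^{+}$. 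However, two steps of your plan do not go through as written. First, the claim that one blow-up of the cusp ``creates an infinitely near singular point of $\Ov{R}$ at $\Ha{z}_{1}$'' is false in general: an ordinary cusp becomes smooth after a single blow-up, and whether $\Ha{z}_{1}$ is a singular point of the branch locus depends on whether the exceptional curve is absorbed into $\Ha{R}$ (i.e., on $\Mult_{\Ov{z}}\Ov{R}$ modulo $n$), not on the cusp alone. The paper needs no such infinitely near points; it only uses the $\sharp(\Ov{K}\cdot\Ov{z})$ singular points at the orbit points themselves, each of multiplicity at least $m$.

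Second, and more seriously, you resolve the cuspidal structure with a depth-$v$ chain of blow-ups and import the bound on $\sharp\Ha{R}_{v}(p)$ ``exactly as in Proposition~\ref{C2}.'' That bound was derived from the tangency geometry of (C2) (a branch can be tangent only to the first and last exceptional curves of the chain separating it from the fiber). Under (C3) there are no tangential branches, and for a deep cusp resolution the number of exceptional curves absorbed into $\Ha{R}$ is governed by how many infinitely near multiplicities lie in $n\Z+1$, which that argument does not control; so the bound must be rederived, not quoted. The paper sidesteps all of this: it blows up only \emph{once} at each orbit point (resp.\ once at the node), never introduces $u$ or $v$, and uses the crude bounds $\sharp\Ha{R}_{v}(p)/\sharp(\Ov{K}\cdot\Ov{z})\leq 1$ (possible because, for $n\geq 3$, one cannot have $\Mult_{\Ov{z}}\Ov{R}\in n\Z+1$ and $\Ov{\Ga}_{p}\subset\Ov{R}$ simultaneously) and $\sharp\Ha{R}_{v}(p)\leq 3$, after which the final inequality is a single explicit polynomial in $n,r$ evaluated at $m=n$. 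Your assertion that the nodal sub-case ``exactly forces $r\geq 12$'' is a prediction, not a verification; until the $\sharp\Ha{R}_{v}(p)$ bound is fixed and the polynomial actually computed, the numerical endgame is open.
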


\begin{proof}
Let $\Ov{z} \in \Ov{R}$ and $\Ov{D} \in \Rm{L}_{\Rm{tr}}(\Ov{R},\Ov{z})$ be as in (C3).
Denote by $\Ov{\Bb{D}}$ the $\Rm{Stab}_{\Ov{K}}(\Ov{z})$-orbit of $\Ov{D}$.
We introduce some notations as follows.
\begin{itemize}
\item $m_{\Ov{D}}:=\Rm{mult}_{\Ov{z}}\Ov{D}$
\item $m_{\Ov{\Bb{D}}}:=\Rm{mult}_{\Ov{z}}\Ov{\Bb{D}}$
\item $m:=\Rm{min}\{m' \mid m' \geq m_{\Ov{\Bb{D}}},\quad m' \in n\Z$ or $n\Z + 1\}$.
\end{itemize}

\smallskip 

Case1.\;The fiber $\Ov{\Gamma}_{p}$ has a node at $\Ov{z}$.

Put $\Ov{K} \cdot \Ov{z} := \{ \Ov{z}_{1}(:=\Ov{z}), \cdots , \Ov{z}_{\Sh(\Ov{K}\cdot \Ov{z})}\}$.
Let $\Ha{P}\to \Ov{P}$ be the composite of blowing-ups at $\Ov{z}_{j}$ for $j=1,\cdots, \sharp (\Ov{K}\cdot z)$.
We denote by $\Ch{\psi}:\Ha{P}\to P$ the composite of the above $\Ha{P}\to \Ov{P}$ and $\Ov{P}\to P$.
Let $\Ha{R}$ be the branch locus on $\Ha{P}$ and
 $\Ha{\Gamma}_{p}$ be a fiber of $\Ha{\varphi}:=\varphi \circ \Ch{\psi}$ over $p$.
By $(\Ov{\Bb{D}},\Ov{\Gamma}_{p})_{\Ov{z}} \geq 2$, we have
\[
\sum_{\Ha{z}\in \Ha{\Gamma}_{p}}\Al^{+}_{0}(\Gamma_{p})_{\Ha{z}}
\geq \frac{1}{2}(\Ov{\Bb{D}},\Ov{\Gamma}_{p})_{\Ov{z}}  \sharp (\Ov{K}\cdot \Ov{z}).
\]
Since there exist $\sharp (\Ov{K}\cdot \Ov{z})$ singular points of $\Ov{R}$ of multiplicity at least $m$, 
we have 
\begin{align*}
 (r-1)K_{f}^{2}(\Ga_{p})_{\Ha{z}} 
 &\geq \frac{n-1}{n}\left((n-1)r-2n \right)
 \frac{1}{2}(\Ov{\Bb{D}},\Ov{\Gamma}_{p})_{\Ov{z}} \sharp(\Ov{K}\cdot \Ov{z})\\
&+\left((n^2 -1)\left[\frac{m}{n}\right]\left(r-n\left[\frac{m}{n}\right]\right) -n(r-1) \right)\sharp(\Ov{K}\cdot \Ov{z}).\\
 &-2\frac{n-1}{n}\left((n-1)r-2n\right)\sharp\Ha{R}_{v}(p)
 \end{align*}
from Lemma~\ref{addlem}.
We need to show 
\begin{align*}
2nr&\biggl\{ \frac{n-1}{n}\left((n-1)r-2n \right)
 \frac{1}{2}(\Ov{\Bb{D}},\Ov{\Gamma}_{p})_{\Ov{z}}  \sharp(\Ov{K}\cdot \Ov{z})\biggr.\\
&+\left((n^2 -1)\left[\frac{m}{n}\right]\left(r-n\left[\frac{m}{n}\right]\right) -n(r-1) \right)\sharp(\Ov{K}\cdot \Ov{z}) \\
\biggl.  &-2\frac{n-1}{n}\left((n-1)r-2n\right)\sharp\Ha{R}_{v}(p)\biggr\}\\
& -(n-1)r\left((n-1)r-2n\right)\sharp \Ov{K} \geq 0
 \end{align*}
in order to obtain the desired inequality. 
By $(\Ov{\Bb{D}},\Ov{\Gamma}_{p})_{\Ov{z}} \geq  \sharp \Rm{Stab}_{\Ov{K}}(\Ov{z})$, it is sufficient to show
\begin{align*}
2nr\biggl\{ (n^2 -1)\left[\frac{m}{n}\right]\left(r-n\left[\frac{m}{n}\right]\right) -n(r-1) 
-2\frac{n-1}{n}\left((n-1)r-2n\right)\frac{\sharp\Ha{R}_{v}(p)}{\sharp(\Ov{K}\cdot \Ov{z})}\biggr\}\geq 0.
 \end{align*} 
By $n \geq 3$,
it cannot be $\Rm{mult}_{\Ov{z}}\Ov{R}\in n\Z+1$ and $\Ov{\Gamma}_{p} \subset \Ov{R}$ at the same time.
Hence we have $\sharp\Ha{R}_{v}(p)/\sharp(\Ov{K}\cdot \Ov{z}) \leq 1$.
It suffices to show 
\begin{align*}
(n^2 -1)(r-n) -n(r-1)  
 -2\frac{n-1}{n}\left((n-1)r-2n\right) \geq 0.
 \end{align*}
We can check it by a simple calculation.

\smallskip

Case2.\;$\Ov{\Gamma}_{p}$ has a node at $\Ov{z}$ which consists with two irreducible components 
$\Ov{\Gamma}_{1}$, $\Ov{\Gamma}_{2}$.

Let $\Ha{P}\to \Ov{P}$ be the blowing-up at $\Ov{z}$.
We denote by $\Ch{\psi}:\Ha{P}\to P$ the composite of the above $\Ha{P}\to \Ov{P}$ and $\Ov{P}\to P$.
Let $\Ha{R}$ be the branch locus on $\Ha{P}$ and
 $\Ha{\Gamma}_{p}$ be a fiber of $\Ha{\varphi}:=\varphi \circ \Ch{\psi}$ over $p$.
We have $(\Ov{\Bb{D}},\Ov{\Gamma}_{p})_{\Ov{z}}\geq \sharp \Ov{K}$.
There exist one singular point of multiplicity $r/2$,
the singular point $\Ov{z}$ of multiplicity at least $m$.
Hence we have
\begin{align*}
 (r-1)K_{f}^{2}(\Ga_{p})  
&\geq \frac{n-1}{n}\left((n-1)r-2n\right)\frac{(\Ov{\Bb{D}}_{1},\Ov{\Gamma}_{p})_{\Ov{z}}}{2}
+\frac{(n^2-1)}{4n}r^{2}-n(r-1)\\
&+\left( (n^2-1)\left[\frac{m}{n}\right](r-n\left[\frac{m}{n}\right])
-n(r-1)\right)\\
 &-2\frac{n-1}{n}\left((n-1)r-2n\right)\sharp \Ha{R}_{v}(p).\\
\end{align*} 
We need to show 
\begin{align*}
2nr& \biggl\{\frac{n-1}{n}\left((n-1)r-2n\right)\frac{(\Ov{\Bb{D}}_{1},\Ov{\Gamma}_{p})_{\Ov{z}} }{2}
+\frac{(n^2-1)}{4n}r^{2}-n(r-1) \biggr.\\
&+\left( (n^2-1)\left[\frac{m}{n}\right](r-n\left[\frac{m}{n}\right])
-n(r-1)\right)\\
\biggl.&-2\frac{n-1}{n}\left((n-1)r-2n\right)\sharp \Ha{R}_{v}(p)\biggr\} \\
&-(n-1)r\left((n-1)r-2n\right)\sharp \Ov{K} \geq 0
\end{align*} 
in order to obtain the desired inequalities.
By $(\Ov{\Bb{D}},\Ov{\Gamma}_{p})_{\Ov{z}} \geq   \sharp \Ov{K}$, it is sufficient to show
\begin{align*}
2nr& \biggl\{\frac{n-1}{n}\left((n-1)r-2n\right)\frac{(\Ov{\Bb{D}}_{1},\Ov{\Gamma}_{p})_{\Ov{z}} }{2}
+\frac{(n^2-1)}{4n}r^{2}-n(r-1) \biggr.\\
&+\left( (n^2-1)\left[\frac{m}{n}\right](r-n\left[\frac{m}{n}\right])
-n(r-1)\right)\\
\biggl.&-2\frac{n-1}{n}\left((n-1)r-2n\right)\sharp \Ha{R}_{v}(p)\biggr\} \\
&-(n-1)r\left((n-1)r-2n\right)(\Ov{\Bb{D}}_{1},\Ov{\Gamma}_{p})_{\Ov{z}} \geq 0.
\end{align*} 
It is equivalent to 
\begin{align*}
2nr& \biggl\{\frac{(n^2-1)}{4n}r^{2}-n(r-1) \biggr.\\
&+\left( (n^2-1)\left[\frac{m}{n}\right](r-n\left[\frac{m}{n}\right])
-n(r-1)\right)\\
\biggl.&-2\frac{n-1}{n}\left((n-1)r-2n\right)\sharp \Ha{R}_{v}(p)\biggr\} \geq 0.
\end{align*} 
Since we have $\sharp \Ha{R}_{v}(p) \leq 3$,
it is sufficient to show 
\begin{align*}
\frac{(n^2-1)}{4n}r^{2}-n(r-1) 
+(n^2-1)(r-n)-n(r-1)-6\frac{n-1}{n}\left((n-1)r-2n\right) \geq 0.
\end{align*} 
We can check it by a simple calculation.
\end{proof}

\begin{prop}
\label{C4}
Assume $(C4)$ is hold, $r \geq 16$ if $n=4$ and $r \geq 15$ if $n=3$.
Then it holds that 
\begin{align*}
 2nr(r-1)K_{f}^{2}(\Ga_{p}) \geq 
  (n-1)r\left((n-1)r-2n\right)\sharp{\Ov{K}} .
\end{align*}
\end{prop}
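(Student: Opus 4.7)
The plan is to follow the same template as Propositions~\ref{C2} and~\ref{C3}: take a singular point $\Ov{z}\in\Ov{R}$ on $\Ov{\Gamma}_p$, produce a lower bound for $m:=\Rm{mult}_{\Ov{z}}\Ov{R}$ in terms of the stabilizer $H:=\Stab_{\KB}(\Ov{z})$, then apply Lemma~\ref{addlem} on a suitable blow-up of the $\KB$-orbit of $\Ov{z}$, and finally reduce to a polynomial inequality in $r$ that becomes sharp at the smallest admissible value $m=n$. Under~(C4) every local branch of $\Ov{R}$ at $\Ov{z}$ is smooth and meets $\Ov{\Gamma}_p$ transversally, so $m$ equals the number of branches; since $m\in n\Z\cup(n\Z+1)$ we have $m\geq n$. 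Because~(C1) fails, $H$ is non-trivial, and hence $\Sh(\KB\cdot\Ov{z})\leq \Sh\KB/2$.

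The first step is a structural analysis of the $H$-action at $\Ov{z}$. Linearizing and diagonalizing the $H$-action on $T_{\Ov{z}}\PB$ yields two characters, one along the fibration direction and one transverse. An element with distinct characters permutes the tangent directions of the smooth transversal branches non-trivially and fixes at most one branch; an element acting by scalars acts non-trivially on the fiber $\Ov{\Gamma}_p$ fixing $\Ov{z}$, and combined with the failure of~(C3) (no cuspidal transversal branches) forces every branch-orbit to have size $\Sh H$. Either way one obtains $m\geq \Sh H-1$, so the $\KB$-orbit of $\Ov{z}$ contributes of order $\Sh\KB/n$ singular points of multiplicity $\geq m$ to $\alpha_{[m/n]}(\Gamma_p)$.

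One then splits into Case~1 ($\Ov{\Gamma}_p$ smooth at $\Ov{z}$) and Case~2 ($\Ov{\Gamma}_p$ nodal at $\Ov{z}$), exactly parallel to Proposition~\ref{C3}. In each case, form $\Ha{\psi}:\Ha{P}\to\PB$ by blowing up $\Ov{z}$ and (in Case~1) its $\KB$-orbit, apply Lemma~\ref{addlem}, and include the singularity of multiplicity $r/2$ from Lemma~\ref{badhyoujyunnka} when $p\in\Delta$. Bounding the number of vertical exceptional components introduced by a small constant as in Propositions~\ref{C2} and~\ref{C3}, the target inequality reduces to
\begin{align*}
(n^2-1)\left[\tfrac{m}{n}\right]\!\left(r-n\left[\tfrac{m}{n}\right]\right)-n(r-1)-C\,\tfrac{n-1}{n}\bigl((n-1)r-2n\bigr)\;\geq\;0
\end{align*}
for a bounded constant $C$. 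The left side is a downward parabola in $[m/n]$ on $[1,\,r/(2n)]$, hence minimized at $m=n$; the resulting linear inequality in $r$ holds automatically for $n\geq 5$, is sharp at $r=16$ for $n=4$, and is sharp at $r=15$ for $n=3$, matching the hypotheses precisely. The main obstacle will be the stabilizer-multiplicity bound in the scalar subcase of Step~2, where one must prevent $H$ from fixing every branch individually by combining the failure of~(C3) with the global structure of the $\KB$-action on $\PB$.
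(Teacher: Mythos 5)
Your template (orbit of a local branch at a singular point $\Ov{z}$, Lemma~\ref{addlem} after blowing up the $\KB$-orbit of $\Ov{z}$, reduction to a concave quadratic in $[m/n]$ minimized at $m=n$) is the right skeleton, but there are two genuine gaps. First, you assert that under (C4) \emph{every} local branch of $\Ov{R}$ at $\Ov{z}$ is smooth and transverse to $\Ov{\Gamma}_p$, so that $m=\Rm{mult}_{\Ov{z}}\Ov{R}$ equals the number of branches. That conflates $\Ov{R}$ with $\Ov{R}_h$: conditions (C2)--(C4) only constrain the \emph{horizontal} branches, so (C4) is perfectly compatible with $\Ov{\Gamma}_p\subset\Ov{R}$ (or with an irreducible component of a nodal fiber lying in $\Ov{R}$). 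In that situation every point of $\Ov{R}_h\cap\Ov{\Gamma}_p$ is a singular point of $\Ov{R}$, there are at least $n$ of them, the fiber itself produces a vertical component of $\Ha{R}_v(p)$, and the quantitative relation between $m$ and $\Sh\Ov{K}$ changes completely: one needs $(\Ov{R}_h,\Ov{\Gamma}_p)_{\Ov{z}}\,\Sh(\Ov{K}\cdot\Ov{z})=r$ together with the orbit-length data of Table~\ref{subgroup} (giving $\Sh(\Ov{K}\cdot\Ov{z})\geq\Sh\Ov{K}/3$, hence $\Rm{mult}_{\Ov{z}}\Ov{R}\leq r/3+1$). Your uniform ``minimize at $m=n$'' reduction does not cover this case, and it is where the largest constants appear. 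Relatedly, you never separate $\Rm{mult}_{\Ov{z}}\Ov{R}\in n\Z$ from $n\Z+1$: in the latter case the exceptional curve of the blow-up lies in the branch locus, forcing further singular points on it and a different count (handled in the paper via Lemma~\ref{lowerbdofK} with the $B_n$ term and a factor $2\Sh(\Ov{K}\cdot\Ov{z})$), rather than being absorbed into ``a bounded constant $C$''.

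Second, the stabilizer--multiplicity bound you rely on, $m\geq\Sh\Stab_{\KB}(\Ov{z})-1$ together with ``every branch-orbit has size $\Sh H$'', is exactly the step you flag as the main obstacle, and it is not what one actually needs or what one can prove. An element of $\Stab_{\KB}(\Ov{z})$ whose action on $T_{\Ov{z}}\overline{P}$ fixes the transverse eigendirection can fix a branch tangent to it, and several smooth transverse branches may share a tangent direction, so branch-orbits need not have full length. The working substitute is weaker: since $\Rm{mult}_{\Ov{z}}\Ov{R}\geq 3$ there is at least one transversal branch $\Ov{D}$ \emph{not} fixed by $\Stab_{\KB}(\Ov{z})$, and the usable inequalities are of the shape $2m\geq\Sh\Ov{K}$ (smooth fiber point) or $(\Ov{\Bb{D}},\Ov{\Gamma}_p)_{\Ov{z}}\geq\Sh\Ov{K}/2$ (at a node, where $\kappa_{\Rm{sw}}$ may stabilize $\Ov{D}$), not $m\geq\Sh H-1$. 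Without pinning these down per case, the final polynomial inequality in $r$ — whose truth at the threshold values $r\geq 16$ ($n=4$) and $r\geq 15$ ($n=3$) is the whole point of the proposition — cannot be verified, so the proof is not complete as proposed.
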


\begin{proof}

\smallskip

Let $\Ov{z}$ be a singular point of $\Ov{R}$ on $\Ov{\Gamma}_{p}$.
Since $\Rm{mult}_{\Ov{z}}\Ov{R} \geq 3$, there exists a $\Ov{D} \in \Rm{L}_{\Rm{tr}}(\Ov{R}_{h},\Ov{z})$
which is not fixed by $\Rm{Stab}_{\Ov{K}}(\Ov{z})$.
We let $\Ov{\Bb{D}}$ be the $\Rm{Stab}_{\Ov{K}}(\Ov{z})$-orbit of $\Ov{D}$.
We introduce some notations as follows.
\begin{itemize}
\item $m_{\Ov{\Bb{D}}}:=\Rm{mult}_{\Ov{z}}\Ov{\Bb{D}}$,
\item $m:=\Rm{min}\{m' \mid m' \geq m_{\Ov{\Bb{D}}},\quad m' \in n\Z$ or $n\Z + 1\}$.
\end{itemize}

\smallskip

Case1.\;The fiber $\Ov{\Gamma}_{p}$ has no node at $\Ov{z}$.

Put $\Ov{K} \cdot \Ov{z} := \{ \Ov{z}_{1}(:=\Ov{z}), \cdots , \Ov{z}_{\Sh(\Ov{K}\cdot \Ov{z})}\}$.
Let $\Ha{P}\to \Ov{P}$ be the composite of blowing-ups at $\Ov{z}_{j}$ for $j=1,\cdots, \sharp \Ov{K}\cdot z$.
We denote by $\Ch{\psi}:\Ha{P}\to P$ the composite of the above $\Ha{P}\to \Ov{P}$ and $\Ov{P}\to P$.

\smallskip

(1-1)\:The case that $\Ov{\Gamma}_{p} \not\subset \Ov{R}$.

Suppose $\Rm{mult}_{\Ov{z}}\Ov{R} \in n\Z$.
Since $\sharp \Ha{R}_{v}(p)=0$, we have 
\begin{align*}
 &(r-1)K_{f}^{2}(\Ga_{p}) \geq\left((n^2 -1)\left[\frac{m}{n}\right]\left(r-n\left[\frac{m}{n}\right]\right) -n(r-1) \right)\sharp(\Ov{K}\cdot \Ov{z}).
 \end{align*}
from Lemma~\ref{addlem}.
It suffices to show 
\begin{align}
\label{1/22,1}
2nr& \left( (n^2-1)\left[\frac{m}{n}\right](r-n\left[\frac{m}{n}\right])-n(r-1)\right)
-(n-1)r\left((n-1)r-2n\right)m \geq 0
\end{align} 
in order to obtain the desired inequalities.
The left hand side of the above inequality can be considered as a quadratic function in $m$ defined in the interval $n \leq m \leq r/2$.
Since it is upward concave, it suffices to consider its value at the endpoints.
We can check it by a simple calculation.

Suppose $\Rm{mult}_{\Ov{z}}\Ov{R} \in n\Z+1$.
Let $\Ha{E}$ be an exceptional curve of $\Ov{z}$.
Since $\Ha{E} \subset \Ha{R}$, 
there exist a singular point of $\Ha{R}$ on $\Ha{E}$.
Since $\Ov{\Bb{D}}$ is the  $\Rm{Stab}_{\Ov{K}}(\Ov{z})$-orbit of $\Ov{D}$,
the proper transform of $\Ov{\Bb{D}}$ has a singular point of multiplicity $m_{\Ov{\Bb{D}}}$ on $\Ha{E}$.
Hence, we have
\begin{align*}
 &(r-1)K_{f}^{2}(\Ga_{p}) \geq \\
 & \left(-\frac{(n-1)}{n}\left((n-1)r-2n \right)B_{n}+
 (n^2-1)\left[\frac{m}{n}\right](r-n\left[\frac{m}{n}\right])-(r-1)n\right) 2\sharp( \Ov{K}\cdot \Ov{z} )
\end{align*}
from Lemma~\ref{lowerbdofK}.
We need to show 
\begin{align*}
2nr& \left\{-\frac{(n-1)}{n}\left((n-1)r-2n \right)B_{n} \right. \\
&\left. +(n^2-1)\left[\frac{m}{n}\right](r-n\left[\frac{m}{n}\right])-(r-1)n \right\}2\sharp( \Ov{K}\cdot \Ov{z})\\
&-(n-1)r\left((n-1)r-2n\right)\sharp \Ov{K} \geq 0
\end{align*} 
in order to obtain the desired inequality.
It is sufficient to show
\begin{align*}
2nr&\left\{-\frac{(n-1)}{n}\left((n-1)r-2n \right)B_{n} \right. \\
&\left. +(n^2-1)\left[\frac{m}{n}\right](r-n\left[\frac{m}{n}\right])-(r-1)n \right\}\\
&-(n-1)r\left((n-1)r-2n\right)\frac{m}{2} \geq 0.
\end{align*} 
We can check it by a simple calculation.
\smallskip

(1-2) The case that $\Ov{\Gamma}_{p} \subset \Ov{R}$.

(1-2-1) The fiber $\Ov{\Gamma}_{p}$ consists with one irreducible component.

By the assumption $\Ov{\Gamma}_{p} \subset \Ov{R}$,
there exist at least $n$-singular points $\Ov{R}$ on $\Ov{\Gamma}_{p}$.
Since $\Ov{K}$ has non-trivial stabilizer on an arbitrary singular point of $\Ov{R}$ on $\Ov{\Gamma}_{p}$,
$\Ov{K}$ is not cyclic group.
Suppose that there exist two singular points $\Ov{z}$ and $\Ov{z}'$ of $\Ov{R}$ 
such that $\Ov{K}\cdot \Ov{z} \cap \Ov{K}\cdot \Ov{z}' = \emptyset$. 
Changing the notation of  $\Ov{z}$ and $\Ov{z}'$ if necessary,
we may assume that $\sharp( \Ov{K}\cdot \Ov{z}) \geq \sharp( \Ov{K}\cdot \Ov{z}')$.
From the Table \ref{subgroup}, it holds that $\sharp (\Ov{K}\cdot \Ov{z}) \geq \sharp \Ov{K} /3$.
Hence we have 
\begin{align*}
 &(r-1)K_{f}^{2}(\Ga_{p}) \geq \\
 & \left(-\frac{(n-1)}{n}\left((n-1)r-2n \right)B_{n}+
 (n^2-1)(r-n)-(r-1)n\right) \sharp\Ov{K}/3
\end{align*}
from Lemma~\ref{lowerbdofK}.
Hence we have the desired inequality.

By the above argument, we may suppose that all singular points of $\Ov{R}$ on $\Ov{\Gamma}_{p}$
are on points $\Ov{K}\cdot \Ov{z}$.
Therefore it holds that $(\Ov{R}_{h},\Ov{\Gamma}_{p})_{\Ov{z}}\sharp(\Ov{K}\cdot \Ov{z}) = r$ .
Since  $\Rm{mult_{\Ov{z}}}\Ov{R}\in n\Z$ and $\sharp \Ha{R}_{v}(p)=1$,
we have 
\begin{align*}
 (r-1)K_{f}^{2}(\Ga_{p}) \geq 
& -2\frac{(n-1)}{n}\left((n-1)r-2n \right) \\
& +\left((n^2 -1)\frac{\Rm{mult}_{\Ov{z}}\Ov{R}}{n}\left(r-\Rm{mult}_{\Ov{z}}\Ov{R}\right) -n(r-1) \right)\sharp(\Ov{K}\cdot \Ov{z}).
 \end{align*}
from Lemma~\ref{addlem}.
It suffices to show 
\begin{align*}
2nr  &\left\{  -2\frac{(n-1)}{n}\left((n-1)r-2n \right) \right.\\
& \left. +\left((n^2 -1)\frac{\Rm{mult}_{\Ov{z}}\Ov{R}}{n}\left(r-\Rm{mult}_{\Ov{z}}\Ov{R}\right) -n(r-1) \right)\sharp(\Ov{K}\cdot \Ov{z}) \right\}\\
&-(n-1)r\left((n-1)r-2n\right)\sharp \Ov{K} \geq 0
 \end{align*}
 in order to obtain the desired inequality.

It is equivalent to 
\begin{align*}
 2nr&\biggl\{ -\frac{(n-1)}{n}\left((n-1)r-2n \right) \frac{2}{\sharp(\Ov{K}\cdot \Ov{z})}
 +\left((n^2 -1)\frac{\Rm{mult}_{\Ov{z}}\Ov{R}}{n}\left(r-\Rm{mult}_{\Ov{z}}\Ov{R}\right)  -n(r-1) \right) \biggr\}\\
 & -(n-1)r\left((n-1)r-2n\right)\sharp \Rm{Stab}_{\Ov{K}}(\Ov{z}) \geq 0.
 \end{align*}
By $\sharp \Rm{Stab}_{\Ov{K}}(\Ov{z}) \leq (\Ov{R}_{h},\Ov{\Gamma}_{p})_{\Ov{z}}\leq \Rm{mult}_{\Ov{z}}\Ov{R}$,
it is equivalent to 
\begin{align*}
 2nr&\biggl\{\left((n^2 -1)\frac{\Rm{mult}_{\Ov{z}}\Ov{R}}{n}\left(r-\Rm{mult}_{\Ov{z}}\Ov{R}\right)  -n(r-1) \right) \biggr\}\\
 & -(n-1)\left((n-1)r-2n\right)(r+4)\Rm{mult}_{\Ov{z}}\Ov{R}\geq 0.
 \end{align*}
Since there exist at least $n$ singular points of $\Ov{R}$ on $\Ov{\Gamma}_{p}$,
we have $\sharp (\Ov{K}\cdot \Ov{z}) \geq 3$.
Hence it holds $n \leq \Rm{mult}_{\Ov{z}}\Ov{R} \leq \frac{r}{3} +1$.
We can check the above inequality by a simple calculation.

\smallskip

(1-2-2) The fiber $\Ov{\Gamma}_{p}$ consists with two irreducible components.

Since $\sharp\Ha{R}_{v}(p)=2$, we have 
\begin{align*}
 (r-1)K_{f}^{2}(\Ga_{p}) \geq  
&-4\frac{(n-1)}{n}\left((n-1)r-2n \right) \\
+& \frac{(n^2-1)}{4n}r^{2}-n(r-1) \\
+& 2\left((n^2 -1)\left[\frac{m}{n}\right]\left(r-n\left[\frac{m}{n}\right]\right) -n(r-1) \right)
 \end{align*}
from Lemma~\ref{addlem}.
It is sufficient to show 
\begin{align*}
 2nr\biggl\{ &-4\frac{(n-1)}{n}\left((n-1)r-2n \right) 
+ \frac{(n^2-1)}{4n}r^{2}-n(r-1) \biggr. \\
\biggl. &+ 2\left((n^2 -1)\left[\frac{m}{n}\right]\left(r-n\left[\frac{m}{n}\right]\right) -n(r-1) \right) \biggr\}\\
 & -(n-1)r\left((n-1)r-2n\right)\sharp \Ov{K} \geq 0
 \end{align*}
to obtain the desired inequality.
By $2m \geq \Ov{K}$, it suffices to show 
\begin{align*}
 2nr\biggl\{ &-4\frac{(n-1)}{n}\left((n-1)r-2n \right) 
+ \frac{(n^2-1)}{4n}r^{2}-n(r-1) \biggr. \\
\biggl. &+ 2\left((n^2 -1)\left[\frac{m}{n}\right]\left(r-n\left[\frac{m}{n}\right]\right) -n(r-1) \right) \biggr\}\\
 & -(n-1)r\left((n-1)r-2n\right)2m \geq 0.
 \end{align*}
 Hence it suffices to show 
\begin{align*}
  &-4\frac{(n-1)}{n}\left((n-1)r-2n\right)+\frac{(n^2-1)}{4n}r^{2}-n(r-1)  \geq 0,\\
  &2nr \left((n^2 -1)\left[\frac{m}{n}\right]\left(r-n\left[\frac{m}{n}\right]\right) -n(r-1) \right) 
   -(n-1)r\left((n-1)r-2n\right)m \geq 0.
 \end{align*}
We can check it by a simple calculation and (\ref{1/22,1}).

\smallskip

Case2.\;$\Ov{\Gamma}_{p}$ has a node at $\Ov{z}$ which consists with two irreducible components 
$\Ov{\Gamma}_{1}$, $\Ov{\Gamma}_{2}$.

From the argument of Case1, we may assume 
that there exists only one singular point $\Ov{z}$ of $\Ov{R}$, where $\Ov{z}$ is the node of $\Ov{\Gamma}_{p}$.
Since $\Rm{L}_{\Rm{ta}}(\Ov{R},\Ov{z})=\emptyset$, it implies that $\Ov{\Gamma}_{p}\not\subset\Ov{R}$.

Suppose $\Rm{mult}_{\Ov{z}}\Ov{R} \in n\Z$.
Then we have 
\begin{align*}
 (r-1)K_{f}^{2}(\Ga_{p})  
&\geq \frac{n-1}{n}\left((n-1)r-2n\right)\frac{(\Ov{\Bb{D}}_{1},\Ov{\Gamma}_{p})_{\Ov{z}}}{2}\\
&+\frac{(n^2-1)}{4n}r^{2}-n(r-1)+\left( (n^2-1)\left[\frac{m}{n}\right](r-n\left[\frac{m}{n}\right])-n(r-1)\right)
\end{align*} 
from Lemma~\ref{addlem}.
We need to show 
\begin{align*}
2nr&\left\{ \frac{n-1}{n}\left((n-1)r-2n\right)\frac{(\Ov{\Bb{D}}_{1},\Ov{\Gamma}_{p})_{\Ov{z}}}{2}\right.\\
&\left.+\frac{(n^2-1)}{4n}r^{2}-n(r-1)+\left( (n^2-1)\left[\frac{m}{n}\right](r-n\left[\frac{m}{n}\right])-n(r-1)\right)\right\}\\
&-(n-1)r\left((n-1)r-2n\right) \sharp{\Ov{K}} \geq 0
\end{align*} 
to obtain the desired inequality.
Since the action $\kappa_{\Rm{sw}}$ may stable $\Ov{D}_{1}$, it holds $(\Ov{\Bb{D}}_{1},\Ov{\Gamma}_{p})_{\Ov{z}}\geq \sharp \Ov{K}/2$.
Hence it is sufficient to show
\begin{align*}
2nr&\left\{\frac{(n^2-1)}{4n}r^{2}-n(r-1)+\left( (n^2-1)\left[\frac{m}{n}\right](r-n\left[\frac{m}{n}\right])-n(r-1)\right)\right\}\\
&-(n-1)r\left((n-1)r-2n\right)(\Ov{\Bb{D}}_{1},\Ov{\Gamma}_{p})_{\Ov{z}} \geq 0.
\end{align*} 
By $2m \geq (\Ov{\Bb{D}}_{1},\Ov{\Gamma}_{p})_{\Ov{z}}$, it suffices to show
\begin{align*}
2nr&\left\{\frac{(n^2-1)}{4n}r^{2}-n(r-1)+\left( (n^2-1)\left[\frac{m}{n}\right](r-n\left[\frac{m}{n}\right])-n(r-1)\right)\right\}\\
&-(n-1)r\left((n-1)r-2n\right)2m \geq 0.
\end{align*} 
We can check it by a simple calculation.

Suppose $\Rm{mult}_{\Ov{z}}\Ov{R} \in n\Z+1$.
Since the exceptional curve over $\Ov{z}$ is contained in the branch locus,
there exist at least $(n-1)$ singular points of $\Ov{R}$ on it.
Hence we have 
\begin{align*}
 &(r-1)K_{f}^{2}(\Ga_{p})  \\
&\geq \frac{n-1}{n}\left((n-1)r-2n\right)\frac{(\Ov{\Bb{D}}_{1},\Ov{\Gamma}_{p})_{\Ov{z}}}{2}
-2\frac{(n-1)}{n}\left((n-1)r-2n \right)\\
&+\left(-2\frac{(n-1)}{n}\left((n-1)r-2n \right) +(n^2-1)(r-n)-n(r-1) \right)(n-1)\\
&+\frac{(n^2-1)}{4n}r^{2}-n(r-1)+\left( (n^2-1)\left[\frac{m}{n}\right](r-n\left[\frac{m}{n}\right])-n(r-1)\right)\\
&\geq \frac{n-1}{n}\left((n-1)r-2n\right)\frac{(\Ov{\Bb{D}}_{1},\Ov{\Gamma}_{p})_{\Ov{z}}}{2}\\
&+\frac{(n^2-1)}{4n}r^{2}-n(r-1)+\left( (n^2-1)\left[\frac{m}{n}\right](r-n\left[\frac{m}{n}\right])-n(r-1)\right)
\end{align*} 
by Lemma~\ref{addlem} and a simple calculation.
Hence we can show it by a similar argument in the case of $\Rm{mult}_{\Ov{z}}\Ov{R} \in n\Z$.
\end{proof}

From Proposition \ref{et.case}, \ref{sm.case}, \ref{C1}, \ref{C2}, \ref{C3} and \ref{C4},
we obtain the following.

\begin{prop}
\label{summarize}
Let $\Ov{\varphi}:\Ov{P} \to B$ be the model of Lemma~\ref{hyoujyunka}.
Assume $r \geq 16$ if $n=4$ and $r \geq 18$ if $n=3$.
Take an arbitrary fiber $\Ov{\Gamma}_{p}$ of $\Ov{\varphi}$ with $K_{f}^{2}(\Gamma_{p})>0$.
If either $\Ov{R}$ has singular point on $\Ov{\Gamma}_{p}$, 
$\Ov{\varphi}|_{\Ov{R}_{h}}$ has a ramification point on $\Ov{\Gamma}_{p}$
or $\Ov{K}\cong \Z_{2}$,
it holds 
\begin{align*}
2n(r-1)K_{f}^{2}(\Ga_{p})\geq (n-1)\left((n-1)r-2n) \right) \Sh \Ov{K}.
\end{align*}
Otherwise it holds that 
\begin{align*}
 4nr(r-1)K_{f}^{2}(\Ga_{p}) \geq  \left((n^2-1)r^{2}-4n^{2}(r-1)\right)\sharp \Ov{K}.
\end{align*}
\end{prop}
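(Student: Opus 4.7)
The plan is to prove Proposition~\ref{summarize} by case analysis, cleanly partitioning the situation along the trichotomy introduced at the beginning of Section~4 and then, within the third case, along the four mutually exclusive conditions (C1)--(C4) introduced before Proposition~\ref{C1}. The hypotheses $r\geq 16$ if $n=4$ and $r\geq 18$ if $n=3$ are precisely what is needed so that each of the invoked propositions is applicable; for $n\geq 5$ the numerical bounds are automatic.

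First I would handle the first case (neither a singular point of $\overline{R}$ nor a ramification point of $\overline{\varphi}|_{\overline{R}_h}$ on $\overline{\Gamma}_p$). Proposition~\ref{et.case} directly yields the ``otherwise'' bound
\[
 4nr(r-1)K_{f}^{2}(\Ga_{p}) =  \left((n^2-1)r^{2}-4n^{2}(r-1)\right)\sharp\Ov{K},
\]
and, when $\overline{K}\cong\mathbb{Z}_2$, gives the sharper inequality $2n(r-1)K_{f}^{2}(\Ga_{p}) \geq (n-1)((n-1)r-2n)\sharp \Ov{K}$ as required. Next, the second case (where $\overline{R}$ is smooth near $\overline{\Gamma}_p$ but $\overline{\varphi}|_{\overline{R}_h}$ is ramified at some point of $\overline{\Gamma}_p$) is covered verbatim by Proposition~\ref{sm.case}, which supplies the first inequality in the statement. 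The third case (where $\overline{R}$ has a singular point on $\overline{\Gamma}_p$) falls into exactly one of the four conditions (C1)--(C4), which were set up precisely to be exhaustive and mutually exclusive, and Propositions~\ref{C2},~\ref{C3},~\ref{C4} give the desired first inequality immediately under (C2),~(C3),~(C4) respectively.

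The only delicate step is the reconciliation under condition~(C1). There Proposition~\ref{C1} delivers the slightly different estimate
\[
(r-1)K_{f}^{2}(\Ga_{p}) \geq \left(-\tfrac{n-1}{n}\bigl((n-1)r-2n\bigr)B_{n}+(n^2-1)(r-n)-n(r-1)\right)\sharp\Ov{K},
\]
and what remains is the purely numerical verification that, under the standing assumptions on $r$ and $n$ (and with $B_n=2/n$ for $n\geq 4$, $B_3=1/2$),
\[
2n\left(-\tfrac{n-1}{n}\bigl((n-1)r-2n\bigr)B_{n}+(n^2-1)(r-n)-n(r-1)\right)\;\geq\;(n-1)\bigl((n-1)r-2n\bigr).
\]
This is an elementary quadratic-in-$r$ check, and I expect it to be the main (but still routine) obstacle: one rearranges into the form $A(n)r^2+B(n)r+C(n)\geq 0$ and verifies positivity at the lower endpoint of $r$ allowed by the hypotheses, using that the leading coefficient $(n^2-1)$ is positive. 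With this numerical lemma in hand, assembling the six cases gives the dichotomy stated in Proposition~\ref{summarize}.
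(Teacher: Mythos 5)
Your proposal is correct and follows essentially the same route as the paper, which likewise obtains Proposition~\ref{summarize} by simply assembling Propositions~\ref{et.case}, \ref{sm.case}, \ref{C1}, \ref{C2}, \ref{C3} and \ref{C4} over the exhaustive case split (the three cases of Section~4 and, within the third, (C1)--(C4)); your explicit reconciliation of the (C1) estimate with the target inequality is exactly the step the paper leaves implicit, and it does check out at the minimal admissible $r$ for each $n$. The only cosmetic slip is that the expression in that numerical verification is linear in $r$ (with positive slope), not quadratic, so checking the lower endpoint suffices for that reason rather than via a leading coefficient $(n^2-1)$ of $r^2$.
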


\section{Upper bound of the automorphism group of fibered surface}

\noindent

Now, we consider the upper bound of the order of the automorphism group of $f$.
Let $\Ga_{p}$ be a fiber which is tangent to $R_{h}$ or passes through a singular point of $R$ or is over $p \in \Delta$.
Put $r_{p}=\Sh \Stab_{H}(p)$.
From $r \geq 3n$ and Lemma~\ref{lowerbdofK},
we have
\begin{align*}
K_{f}^2 \geq \frac{\Sh H}{r_{p}}K_{f}^{2}(\Ga_{p}).
\end{align*}
Since $\Sh G=\Sh K\cdot \Sh H=n\Sh \KB\cdot \Sh H$, we get
\begin{align*}
\Sh G \leq \frac{n\Sh \KB}{K_{f}^{2}(\Ga_{p})}r_{p}K_{f}^2.
\end{align*}
From Proposition~\ref{Rhassing} and Proposition~\ref{summarize},
there exists a fiber $\Gamma_{p}$ such that
\begin{align*}
\frac{n\Sh \KB}{K_{f}^{2}(\Ga_{p})}\leq \mu_{r,n}
\end{align*}
and, hence, 
\begin{align*}
\Sh G \leq \mu_{r,n}r_{p}K_{f}^2.
\end{align*}
So we only have to manage $r_{p}$.

\begin{thm}
\label{up.bd.1}
Let $f:S \to B$ be a primitive cyclic covering fibration of type $(g,0,n)$
with $r \geq 16$ if $n=4$ and $r \geq 18$ if $n=3$.
Put 
\begin{align*}
\mu_{r,n}:=\frac{2n^{2}(r-1)}{(n-1)((n-1)r-2n)},\quad \mu_{r,n}':=\frac{4n^{2}r(r-1)}{(n^2 -1)r^2-4n^2(r-1)}.
\end{align*}
Assume furthermore that $f$ is not locally trivial and, when $g(B)=0$, $f$ has at least $3$ singular fibers (e.g., $f$ is not iso-trivial).
Let $G$ be a finite subgroup of $\Aut (f)$.
Then it holds
\[
  \Sh G \leq \begin{cases}
    6(2g(B)-1)\mu_{r,n}K_{f}^2 & (g(B) \geq 1 ) \\
     5\mu_{r,n}' K_{f}^2 & (g(B)=0) .
  \end{cases}
\]
\end{thm}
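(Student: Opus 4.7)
The plan is to combine the reduction written immediately above the theorem with explicit bounds on the stabilizer index $r_{p}=\Sh\Stab_{H}(p)$. Recall that from $\Sh G=n\Sh\KB\cdot \Sh H$, the $H$-invariance of the localization $K_{f}^{2}(\Gamma_{p})$, and the orbit-counting inequality $K_{f}^{2}\geq (\Sh H/r_{p})K_{f}^{2}(\Gamma_{p})$, Proposition~\ref{summarize} gives, for any $p$ in the finite $H$-invariant set $\Sigma:=\{p\in B\mid K_{f}^{2}(\Gamma_{p})>0\}$,
\[
\Sh G\;\leq\; r_{p}\mu_{r,n}K_{f}^{2}\quad\text{or}\quad \Sh G\;\leq\; r_{p}\mu_{r,n}'K_{f}^{2},
\]
depending on whether $\Gamma_{p}$ satisfies the first branch of that proposition (singular point of $\RB$, ramification of $\PHIB|_{\RB_{h}}$, or $\KB\cong\Z_{2}$) or only the second. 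Everything is thus reduced to producing a $p\in\Sigma$ whose stabilizer is bounded by the desired constant.

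For $g(B)\geq 1$, I would first invoke non-local-triviality: by the contrapositive of Lemma~\ref{loc.triv.lem}, $R_{h}$ is not \'etale over $B$, so some fiber $\overline{\Gamma}_{p}$ carries a singular point of $\RB$ or a ramification point of $\PHIB|_{\RB_{h}}$, placing $p$ in $\Sigma$ with the first branch of Proposition~\ref{summarize} satisfied. Since $\Stab_{H}(p)$ embeds faithfully into the one-dimensional tangent space $T_{p}B$ it is cyclic, and Wiman's bound gives $\Sh\Stab_{H}(p)\leq 4g(B)+2$ for $g(B)\geq 2$, while on an elliptic curve the stabilizer of a point in $\Aut(B)$ has order at most $6$; both estimates are dominated by $6(2g(B)-1)$. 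Combining with the first displayed inequality yields $\Sh G\leq 6(2g(B)-1)\mu_{r,n}K_{f}^{2}$.

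For $g(B)=0$, I would combine the hypothesis $\Sh\Sigma\geq 3$ with the classification of finite subgroups of $\Aut(\PROJ^{1})$ in Table~\ref{subgroup}. If $H\cong \Z_{l}$ then only two orbits are non-regular, each of length $1$, so $\Sh\Sigma\geq 3$ forces some $p\in\Sigma$ with $r_{p}=1$. If $H\cong D_{2l}$ the non-generic orbits have lengths $l,l,2$, and since the orbit of length $2$ alone cannot account for three points of $\Sigma$, some $p\in\Sigma$ lies in an orbit of length $\geq l\geq 2$, giving $r_{p}\leq 2$. For the polyhedral groups $T_{12}$, $O_{24}$, $I_{60}$, inspection of Table~\ref{subgroup} shows every stabilizer has order at most $5$, the maximum being attained on the shortest orbit of $I_{60}$. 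Thus one can always pick $p\in\Sigma$ with $r_{p}\leq 5$, and the second-branch bound $\mu_{r,n}'$, which is valid for every $p\in\Sigma$ by Proposition~\ref{summarize}, delivers $\Sh G\leq 5\mu_{r,n}'K_{f}^{2}$.

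The main obstacle is, in the $g(B)\geq 1$ case, to be sure that a fiber satisfying the first branch of Proposition~\ref{summarize} really exists inside $\Sigma$ once non-local-triviality is assumed; this is what Lemma~\ref{loc.triv.lem} and Proposition~\ref{Rhassing} together provide, and it is what allows us to use the sharper constant $\mu_{r,n}$ rather than $\mu_{r,n}'$. For $g(B)=0$ the analogous matching of a small-stabilizer fiber to a first-branch fiber is not automatic, which is why we settle for $\mu_{r,n}'$ in that case.
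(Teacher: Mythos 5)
Your proposal is correct and follows essentially the same route as the paper: reduce via the displayed inequalities preceding the theorem to bounding the stabilizer order $r_{p}$ at a fiber supplied by Lemma~\ref{loc.triv.lem}, Proposition~\ref{Rhassing} and Proposition~\ref{summarize}, then bound $r_{p}$ case by case in $g(B)$. The only divergence is that for $g(B)\geq 2$ you replace the paper's explicit Hurwitz-formula analysis of $\pi:B\to B/H$ (which gives $r_{p}\leq 6(2g(B)-1)$ for every $p$) by the observation that point stabilizers are cyclic together with Wiman's bound $4g(B)+2$ — a legitimate and in fact slightly sharper shortcut — while your $g(B)=0$ orbit-by-orbit reading of Table~\ref{subgroup} just makes explicit what the paper asserts tersely.
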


\begin{proof}
Put $r_{p}=\Sh \Stab_{H}(p)$ for $p \in B$.
Let $\pi:B\to B/H$ be the quotient map of $B$ by $H$.
Note that $r_{p}$ be the ramification index of $\pi$ at $p$.

\smallskip

(i) The case of $g(B)\geq 2$.

\smallskip

We denote the genus of $B/H$ by $g(B/H)$.
From the Hurwitz formula, we get
\begin{align*}
2g(B)-2=\Sh H \left(2g(B/H)-2+\sum_{i=1}^s \frac{r_{i}-1}{r_{i}}\right), 
\end{align*}
where $s$ is the number of  ramification points and $r_{i}$ is the ramification index.
Put
\begin{align*}
T:=2g(B/H)-2+\sum_{i=1}^{s} \frac{r_{i}-1}{r_{i}},
\end{align*}
which must be positive in the present case.
If $g(B/H)\geq 2$, then we get $\Sh H\leq g(B)-1$ by $T\geq 2$, and 
it follows that $r_{i}\leq \Sh H \leq g(B)-1$ for any $i=1, \cdots s$.

Assume that $g(B/H)=1$. Then we get  $s>0$ by $T>0$.
By $r_{i}\geq 2$ for any $i=1, \cdots s$,
we get
$1-1/r_{i}\geq 1/2.$
Therefore we obtain 
$r_{i}\leq \Sh H \leq 4(g(B)-1)$
for any $i=1, \cdots s$ by $T \geq 1/2$. 

Assume that $g(B/H)=0$.
When $s\geq 5$, we get 
$r_{i}\leq \Sh H \leq 4(g(B)-1)$
for any $i=1, \cdots s$ by $T\geq 1/2$.
When $s=4$, one of $r_{i}$ is not less than $3$.
So we get 
$r_{i}\leq \Sh H \leq 12(g(B)-1)$
for any $i=1, \cdots s$ by $T \geq 1/6$. 
When $s=3$, we may assume $r_{1}\geq r_{2}\geq r_{3}$.
By the definition of $T$,
we get  
\begin{align*}
r_{1}\leq \Sh H=\frac{2g(B)-2}{1-\frac{1}{r_{1}}-\frac{1}{r_{2}}-\frac{1}{r_{3}}},
\end{align*}
so we obtain 
\begin{align*}
r_{1}-1-\frac{r_{1}}{r_{2}}-\frac{r_{1}}{r_{3}} \leq 2g(B)-2.
\end{align*}
If $r_{3}=2$, then  $r_{2}\geq 3$, so we get 
\begin{align*}
r_{1}-1-\frac{r_{1}}{3}-\frac{r_{1}}{2} \leq 2g(B)-2.
\end{align*}
Hence we get
$r_{1}\leq 6(2g(B)-1)$.
If $r_{2}\geq r_{3}\geq 3$, we get
$r_{1}\leq 3(2g(B)-1)$
Therefore we obtain 
$r_{p}\leq 6(2g(B)-1)$
for any $p\in B$.

\smallskip

(ii) The case of $g(B)=1$.

We need not consider a translation, since it has no fixed points.
Then it is well known that the order of the automorphism group of $B$ which fixes a point is at most $6$.
So the order of stabilizer of $H$ is at most $6$, and we get $r_{p}\leq 6$.

\smallskip 

(iii) The case of $g(B)=0$

If $H$ is neither a cyclic group nor a dihedral group, the order of a stabilizer of $H$ is at most $5$.
So we may assume $H$ is either a cyclic group or a dihedral group.
It is well known that a rational pencil with at most two singular fibers is iso-trivial.
If the number of singular fibers of $f$ is at least three, then 
there exist a point $p \in B$ such that 
\begin{align*}
K_{f}^{2}(\Ga_{p})>0,\quad r_p \leq 5, \quad
\frac{n\Sh \KB}{K_{f}^{2}(\Ga_{p})}\leq \frac{4n^{2}r(r-1)}{(n^2 -1)r^2-4n^2(r-1)}.
\end{align*}
from Proposition~\ref{summarize}.
Hence we obtain the desired inequality.
\end{proof}

\begin{thm}
\label{6/16.cor}
Let $f:S \to \Bb{P}^{1}$ be a non-locally trivial primitive cyclic covering fibration of type $(g,0,n)$
with $r \geq 16$ if $n=4$ and $r \geq 18$ if $n=3$.
Then it holds
\begin{align*}
\Sh \Aut (S/B) \leq \left(1+\frac{1}{n-1}\right)^{2}\frac{2g+n-1}{2g-2} K_{f}^2.
\end{align*}
\end{thm}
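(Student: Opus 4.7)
The strategy is to produce two distinct fibers at which the strong inequality of Proposition~\ref{summarize} holds, then sum the resulting local inequalities to pick up the factor of two that separates $\mu_{r,n}$ from the target constant $\mu_{r,n}/2$. In the notation of Section~3, $\Aut(S/B)$ equals $K$ with $H$ trivial, so $\Sh \Aut(S/B) = n \Sh \Ov{K}$, and it suffices to bound $\Sh \Ov{K}$.

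First, since $f$ is non-locally trivial, Lemma~\ref{loc.triv.lem} forces $R_h$ to fail to be \'etale over $B$, so at least one fiber carries a singular point of $\Ov{R}$ or a ramification point of $\Ov{\varphi}|_{\Ov{R}_h}$. To promote this to \emph{two} such fibers, I would apply the Hurwitz formula to (the normalization of) the degree-$r$ map $\Ov{\varphi}|_{\Ov{R}_h} : \Ov{R}_h \to \Bb{P}^1$: its ramification-plus-singularity divisor has degree at least $2r - 2$, while each fiber can absorb at most $r - 1$ of it (because $\Ov{R}_h \cdot \Ov{\Ga} = r$), so at least two distinct fibers of $\Ov{\varphi}$ satisfy the hypothesis of Proposition~\ref{summarize}.

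Once two such fibers $\Ov{\Ga}_{p_1}, \Ov{\Ga}_{p_2}$ are in hand, Proposition~\ref{summarize} yields
\[
2n(r-1) K_f^2(\Ga_{p_i}) \geq (n-1)\bigl((n-1)r - 2n\bigr) \Sh \Ov{K} \quad (i = 1, 2).
\]
Summing the two inequalities, using $K_f^2(\Ga_p) \geq 0$ from Lemma~\ref{lowerbdofK} and the identity $K_f^2 = \sum_{p \in B} K_f^2(\Ga_p)$, gives
\[
2n(r-1) K_f^2 \geq 2(n-1)\bigl((n-1)r - 2n\bigr) \Sh \Ov{K}.
\]
Solving for $\Sh \Ov{K}$, multiplying by $n$, and substituting $(n-1)r - 2n = 2(g-1)$ and $r - 1 = (2g + n - 1)/(n-1)$ (both consequences of $r = 2(g+n-1)/(n-1)$) yields exactly the bound of the theorem.

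The hard part is justifying the two-fiber claim. Reducibility of $\Ov{R}_h$, the possibility that $\Ov{R}$ carries $\Ov{\varphi}$-vertical components, and the behavior at the bad locus $\Delta \subset B$ of Lemma~\ref{hyoujyunka} each need separate attention; in particular, if $\Delta \neq \emptyset$ one can invoke that $\Sh \Delta$ is even (Lemma~\ref{Rhassing}), and if $\Ov{K}$ is dihedral the dichotomy in Lemma~\ref{Rhassing} automatically produces a singular point of $\Ov{R}$ or a ramification point of $\Ov{\varphi}|_{\Ov{R}_h}$. When $\Ov{K} \cong \Z_2$ the hypothesis of Proposition~\ref{summarize} is satisfied at every fiber with $K_f^2(\Ga_p) > 0$, so one only needs two such fibers, once again a consequence of $R_h$ not being \'etale over $\Bb{P}^1$.
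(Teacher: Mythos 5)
Your reduction to bounding $\Sh \Ov{K}$, the arithmetic converting a sum of two strong local inequalities into the stated constant, and the disposal of the cases $\Delta=\emptyset$ and $\Ov{K}\cong\Z_{2}$ all agree with the paper. The gap is the two-fiber claim itself: it is \emph{not} true in general that two distinct fibers each satisfy the hypothesis of the strong inequality in Proposition~\ref{summarize}, and the paper's proof is organized precisely around the failure of that claim. First, the Hurwitz count on $\Ov{\varphi}|_{\Ov{R}_{h}}$ collapses when $\Ov{R}_{h}$ is reducible: with $c$ components of degrees $r_{i}$ over $\Bb{P}^{1}$ the ramification divisor of the normalization only has degree at least $2r-2c$, which can vanish, and the singular points of $\Ov{R}$ (intersections of distinct branches) are not seen by Hurwitz at all; you flag this but do not resolve it. Second, and more seriously, Lemma~\ref{Rhassing} produces only \emph{one} singular point or ramification point somewhere on $\Ov{P}$, and nothing prevents all singularities and all ramification of $\Ov{R}$ from being concentrated on a single fiber, while the second fiber with $K_{f}^{2}(\Ga_{p})>0$ is a fiber over $\Delta$ along which $\Ov{R}$ is smooth and unramified. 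In that situation (the paper's Case~2, the sub-case where every component of $\Ov{R}_{h}$ is unramified over $\Bb{P}^{1}$) the $\Delta$-fiber satisfies only the equality of Proposition~\ref{et.case}, which is worth roughly \emph{half} of the strong inequality, so summing your two local bounds falls short of the required total by a factor.

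The paper closes this case not by producing a second good fiber but by strengthening the estimate at the single fiber $\Ov{\Gamma}_{p_{2}}$ carrying the singular point: using that either the orbit of $\Ov{z}$ has length $\sharp\Ov{K}$, or else $\Rm{mult}_{\Ov{z}}\Ov{R}\in n\Z$ with $\Rm{mult}_{\Ov{z}}\Ov{R}\geq \sharp\Ov{K}/2$, it derives $(r-1)K_{f}^{2}(\Ga_{p_{2}})\geq \frac{3}{4}\left(1-\frac{1}{r}\right)\left((n-1)r-2n\right)\sharp\Ov{K}$, i.e.\ three halves of the strong inequality, which together with the half coming from the smooth $\Delta$-fiber recovers the needed factor of two. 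Your proposal contains no mechanism for this compensation, so as written it does not prove the theorem in the case where exactly one fiber carries the degeneration of $\Ov{R}$.
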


\begin{proof}
Since $f$ is not a locally trivial fibration,
there exist at least two singular fibers of $f$.
Hence we have at least two fibers of $\varphi$ such that $K_{f}^{2}(\Gamma)>0$.
We apply Proposition~\ref{summarize} for $G=\Rm{Aut}(S/B)$.
Then we have $ \sharp  \Rm{Aut}(S/B)=n \sharp  \Ov{K}$.
If $\Delta = \emptyset$ or $\Ov{K} \cong \Z_{2}$, it holds clearly from Proposition~\ref{summarize}.
Hence we may assume that $\Delta \neq \emptyset$,
so in particular $\Ov{K}$ is an dihedral group and $\sharp \Delta \geq 2$ .

\smallskip

Case~1.\;There exist at least three fibers with $K_{f}^{2}(\Gamma)>0$.

\smallskip

From Lemma~\ref{Rhassing} and Proposition~\ref{summarize}, 
there exists a fiber $\Gamma$ such that 
\begin{align*}
2n(r-1)K_{f}^{2}(\Gamma)\geq (n-1)\left((n-1)r-2n) \right) \Sh \Ov{K}.
\end{align*}
Hence we obtain the desired inequality.

\smallskip

Case~2.\;There exist at least just two fibers with $K_{f}^{2}(\Gamma)>0$.

\smallskip

We note that $\sharp \Delta = 2$ in this case.
Let $\Gamma_{p_{i}}$ be a fiber of $\Ov{\varphi}$ 
such that $K_{f}^{2}(\Gamma_{p_{i}})>0$ for $i=1,2$.

Assume that there exists a irreducible component $C\subset \Ov{R}_{h}$ such that
$\Ti{C}\to \Bb{P}^{1}$ has a ramification point, where $\Ti{C} \to \Bb{P}^{1}$ is the composition of the normalization $\Ti{C} \to C$ and $\Ov{\varphi}|_{C}:C \to \Bb{P}^{1}$.
Since $\Ti{C}\to \Bb{P}^{1}$ at least two critical values,
$\Ov{\varphi}|_{R_{h}}:R_{h} \to B$ has a ramification point 
or $\Ov{R}_{h}$ has a singular point on $\Gamma_{p_{1}}$ and $\Gamma_{p_{2}}$.
Therefore we have 
\begin{align*}
2n(r-1)K_{f}^{2}(\Gamma_{p_{i}})\geq (n-1)\left((n-1)r-2n) \right) \Sh \Ov{K}.
\end{align*}
for $i=1,2$.
Hence we obtain the desired inequality.

Assume that the morphism $\Ti{C}\to \Bb{P}^{1}$ is an unramified covering for any irreducible component $C \subset \Ov{R}_{h}$,  where $\Ti{C} \to \Bb{P}^{1}$ is the composition of the normalization $\Ti{C} \to C$ and $\Ov{\varphi}|_{C}:C \to \Bb{P}^{1}$.
Changing the numbering of $\Ov{\Gamma}_{p_{1}}$ and $\Ov{\Gamma}_{p_{2}}$ if necessary,
we may assume that $\Ov{R}$ is smooth and $\Ov{\varphi}|_{R_{h}}:R_{h} \to \Bb{P}^{1}$ is unramified 
locally around $\Ov{\Gamma}_{p_{1}}$.
Hence we have 
\begin{align*}
 4nr(r-1)K_{f}^{2}(\Gamma_{p_{1}}) \geq  \left((n^2-1)r^{2}-4n^{2}(r-1)\right)\sharp \Ov{K}.
\end{align*}
Let $\Ov{z}$ be a singular point of $\Ov{R}$ on $\Ov{\Gamma}_{p_{2}}$.
We note that $\sharp(\Ov{K}\cdot\Ov{z})=2$ or $\sharp \Ov{K}$ from the Table~\ref{subgroup}.
If $\sharp(\Ov{K}\cdot\Ov{z}) = \sharp \Ov{K}$,
then we have 
\begin{align*}
 &(r-1)K_{f}^{2}(\Gamma_{p_{2}}) \geq  \\
&\left(-\frac{(n-1)}{n}\left((n-1)r-2n \right) B_{n}
+ \left((n^2 -1)(r-n) -n(r-1) \right)\right) \sharp \Ov{K}.
 \end{align*}
from Lemma~\ref{lowerbdofK}.
Hence we get 
\begin{align*}
 &(r-1)K_{f}^{2}(\Gamma_{p_{2}}) \geq  \frac{3}{4}\left(1-\frac{1}{r}\right)\left((n-1)r-2n \right) \sharp \Ov{K}.
 \end{align*}
by a simple calculation,
we obtain the desired inequality.
If $\Ov{K}\cdot\Ov{z} = \{ \Ov{z}_{1},\Ov{z}_{2}\}$,
let $\Ha{P}\to \Ov{P}$ be the composite of blowing-ups at $\Ov{z}_{1}$ and $\Ov{z}_{2}$.
Since $\Ov{R}_{h}$ does not passes through the node of $\Ov{\Gamma}_{p_{2}}$ from the assumption,
it implies that $\Ov{\Gamma}_{p_{2}}\not\subset \Ov{R}$.
Note also that $m:=\Rm{mult}_{\Ov{z}}\Ov{R}\in n\Z$ and 
$m = (\Ov{R},\Ov{\Gamma}_{p})_{\Ov{z}} \geq \sharp \Ov{K}/2$ from the assumption.
Since $\sharp\Ha{R}_{v}(p)=0$, we have 
\begin{align*}
 (r-1)K_{f}^{2}(\Ga_{p}) &\geq 
\frac{(n^2-1)}{4n}r^{2}-n(r-1)+2\left( (n^2-1)\left[\frac{m}{n}\right](r-n\left[\frac{m}{n}\right])-n(r-1)\right)\\
& \geq  \frac{3}{4}\left(1-\frac{1}{r}\right)\left((n-1)r-2n \right) \sharp \Ov{K},
\end{align*} 
we obtain the desired inequality.
\end{proof}

\section{An example}

We construct a primitive cyclic covering fibration $f:S \to \Bb{P}^{1}$ of type $(g,0,n)$ with
\begin{align*}
\sharp \Rm{Aut}(S/B) \geq \left(1+\frac{1}{n-1}\right)\left(1+\frac{n-1}{2g+n-1}\right)\frac{2g+n-1}{2g-2}.
\end{align*}

We consider the projection $\Phi:\Bb{P}^2 \times \C \to \C$ defined by $\Phi ([w_0:w_1:w_2],z)=z$ where
$[w_0:w_1:w_2]$ is a system of homogeneous coordinates on $\Bb{P}^2$ and $z$ is a coordinate of $\C$.
Let $P^{\circ}$ be a hypersurface of $\Bb{P}^2 \times \C$ defined by the equation $w_0 w_1 -w_2^2 z=0$.
We consider the morphism $\PHI^{\circ}: P^{\circ} \hookrightarrow \Bb{P}^2 \times \C \xrightarrow{\Phi} \C$,
then $\PHI^{\circ}$ is a conic bundle over $\C$.
We have the commutative diagram:
\[
\xymatrix{
  P^{\circ} \ar[d]_{\PHI^{\circ}} \ar@{^{(}-_{>}}[r] & \ar[d]_{\Phi} \Bb{P}^2 \times \C \\
 \C \ar[r]^{\cong}& \C     \\	
}	
\]
We denote the fiber of $\PHI^{\circ}$ over $z \in \C$ by $\Gamma_{z}^{\circ}$.
Note that $\varphi^{\circ}$ has only one singular fiber $\Gamma_{0}^{\circ}$ defined by the equation $w_0 w_1 =0$ in $\Bb{P}^2 $, which is the fiber of $\Phi$ over $0 \in \C$.

Let $l$ be a positive integer divisible by $n$.
We define automorphisms of $\PHI^{\circ}: P^{\circ} \to \C$ as follows:
\begin{align*}
 &\zeta:([w_0:w_1:w_2],z) \mapsto ([\zeta_{l}w_0: \zeta_{l}^{-1} w_1:w_2],z),\\
 & \kappa_{\Rm{sw}}:([w_0:w_1:w_2],z) \mapsto ([w_1:w_0:w_2],z),
\end{align*}
where $\zeta_l$ denotes a primitive $l$-th root of unity.
Let $\Ov{K}$ be the automorphism group on $P^{\circ}$ generated by $\zeta$ and $\kappa_{\Rm{sw}}$,
then $\Ov{K}$ is a dihedral group with order $2l$. 
Since $\kappa_{\Rm{sw}}$ switches irreducible components of $\Gamma_{0}^{\circ}$,
$\kappa_{\Rm{sw}}$ does not preserve a relatively minimal model of $\PHI^{\circ}$.

Let $H$ be a hyper plane of $\Bb{P}^{2}\times \C$ defined 
by equation $w_{0}=w_{1}$ and let $D^{\circ}=P^{\circ} \cap H$.
We can check that $D^{\circ}$ is smooth irreducible curve, 
passes through the node of $\Gamma_{0}^{\circ}$,
$\PHI^{\circ}|_{D^{\circ}}:D^{\circ} \to \C$ is a map of degree 2,  
and $\kappa^{\ast}_{\Rm{sw}}D^{\circ}=D^{\circ}$.
Let $R^{\circ}:=\sum_{i=1}^{l/2} (\zeta^{i})^{\ast}D^{\circ}$.
Then $R^{\circ}$ has one singular point at $([0:0:1],0)$ of multiplicity $l/2$
and is $\Ov{K}$-stable.
We consider the natural injective map  $ \Bb{P}^2 \times \C \hookrightarrow \Bb{P}^2\times \Bb{P}^{1}$
defined by $ ([w_0:w_1:w_2],z)\mapsto ([w_0:w_1:w_2],[z:1])$.
Let $\Ov{P}^{\circ}$ be the closure of $P^{\circ}$ in $\Bb{P}^{2}\times \Bb{P}^{1}$
and $\Ov{\varphi}^{\circ}:\Ov{P}^{\circ}\to \Bb{P}^{1}$ be the natural ruling.
We have the commutative diagram
\[
\xymatrix{
  P^{\circ} \ar[d]_{\PHI^{\circ}} \ar@{^{(}-_{>}}[r] & \ar[d]_{\Ov{\varphi}^{\circ}} \Ov{P}^{\circ} \\
 \C \ar@{^{(}-_{>}}[r]& \Bb{P}^{1}.     \\	
}	
\] 
Then $\Ov{P}^{\circ}$ has a two $A^{1}$-type singular points at $([0:1:0],[1:0])$ and $([1:0:0],[1:0])$.
The group $\Ov{K}$ acts $\Ov{P}^{\circ}$ over $\Bb{P}^{1}$ 
and the action $\kappa_{\Rm{sw}}$ switches these singular points.
Let $\Ov{D}^{\circ}$ be the closure of $D^{\circ}$ in $\Ov{P}^{\circ}$ and
let $\Ov{R}^{\circ}:=\sum_{i=1}^{l/2} (\zeta^{i})^{\ast}\Ov{D}^{\circ}$.
The divisor $\Ov{R}^{\circ}$ has never passes through two singular points of $\Ov{P}^{\circ}$
and $\Ov{R}^{\circ}$ has one singular point at $([0:0:1],[0:1])$ of multiplicity $l/2$.
Let $\Ti{P}\to \Ov{P}^{\circ}$ be the composition of blowing ups at two singular points of $\Ov{P}^{\circ}$
and the singular point of $\Ov{R}^{\circ}$.
The smooth projective surface $\Ti{P}$ admits the natural ruling $\Ti{\varphi}:\Ti{P}\to \Bb{P}^1$ and 
$\Ov{K} \subset \Rm{Aut}(\Ti{P}/\Bb{P}^{1})$.
We denote by $\Ti{R}$ the proper transform of $\Ov{R}^{\circ}$ to $\Ti{P}$.
Since $(\zeta^{i})^{\ast}\Ov{D}^{\circ}\sim(\zeta^{j})^{\ast}\Ov{D}^{\circ}$ for any $i,j$ 
and $l/2$ is divisible by $n$,
there exist a divisor $\Ti{\Fr{d}}$ on $\Ti{P}$ such that $\Ti{R} \in |n\Ti{\Fr{d}}|$.
We consider the classical cyclic $n$-covering 
\begin{align*}
\Ti{\theta}:\Ti{S}:=
\mathrm{Spec}_{ \Ti{P}}\left(\bigoplus_{j=0}^{n-1} \mathcal{O}_{\Ti{P}}(-j\Ti{\Fr{d}})\right)\to \Ti{P}
\end{align*}
branched along $\Ti{R}$.
Then $\Ti{S}$ admits the relatively minimal fibration $\Ti{f}:\Ti{S}\to \Bb{P}^{1}$.
Hence we rewrite $\Ti{f}:\Ti{S}\to \Bb{P}^{1}$ as $f:S \to \Bb{P}^{1}$.
Since it holds $\kappa_{\Ti{P}}^{\ast}\Fr{d} \sim \Fr{d}$ and $\kappa_{\Ti{P}}^{\ast}\Ti{R}=\Ti{R}$
for any $\kappa_{\Ti{P}} \in \Ov{K}$,
there exist an action $\kappa_{S} \in \Rm{Aut}(\Bb{P}^{1})$ for any $\kappa_{\Ti{P}} \in \Ov{K}$ 
which commutes the diagram 
\begin{equation*}
\xymatrix{
S\ar[r]^-{\kappa_{S}}\ar[d]_-{\Ti{\theta}}\ar@{}[rd]|{\circlearrowright}&S\ar[d]^-{\Ti{\theta}}\\
 \Ti{P}\ar[r]_-{\kappa_{\Ti{P}}}& \Ti{P}.
}
\end{equation*}
Hence we have $\sharp \Rm{Aut}(S/\Bb{P}^{1}) \geq 2nl$.

Let $\varphi:P \to \Bb{P}^{1}$ be a relatively minimal model of $\Ti{\varphi}$,
$R$ is the image of $\Ti{R}$ to $P$,
$\Gamma_{p}$ be a fiber of $\varphi$ over $p \in \Bb{P}^{1}$ and $r:=R\Gamma_{p}$.
By the construction, we have 
\begin{align*}
 &(r-1)K_{f}^{2}(\Gamma_{[1:0]})\\
&=\frac{n-1}{n}\left((n-1)r-2n \right)\frac{l}{2}
+2\left(\frac{n^{2}-1}{n}\frac{l}{2}\left(r-\frac{l}{2}\right) -n(r-1) \right),\\
&(r-1)K_{f}^{2}(\Gamma_{[0:1]})\\
&=\frac{n-1}{n}\left((n-1)r-2n \right)\frac{l}{2}
+2\left(\frac{n^{2}-1}{n}\frac{l}{2}\left(r-\frac{l}{2}\right) -n(r-1) \right).
\end{align*}
By $r =l$, we get 
\begin{align*}
(r-1)K_{f}^{2}&=(r-1)K_{f}^{2}(\Gamma_{[1:0]})+(r-1)K_{f}^{2}(\Gamma_{[0:1]})=2\left((n-1)r-2n \right)(r-1).
\end{align*}
Hence we have
\begin{align*}
\sharp \Rm{Aut}(S/\Bb{P}^{1})& \geq \frac{nr}{(n-1)r-2n }K_{f}^{2} \\
&=\left(1+\frac{1}{n-1}\right)\left(1+\frac{n-1}{2g+n-1}\right)\frac{2g+n-1}{2g-2}K_{f}^{2}.
\end{align*}

\vspace{2\baselineskip}
{}

\bigskip
\bigskip

Hiroto Akaike

Department of Mathematics, 
Graduate School of Science, 
Osaka University,

1-1 Machikaneyama, Toyonaka, Osaka 
560-0043, Japan

e-mail: u802629d@ecs.osaka-u.ac.jp

\begin{thebibliography}{9}
\bibitem{Ara} T.\ Arakawa, Bounds for the order of automorphism groups of hyperelliptic fibrations, T\^ohoku.\ Math.\ J. \textbf{50} (1998), 317--323.
 \bibitem{Chen} Z.\ Chen, Best bounds of automorphism groups of hyperelliptic fibrations, T\^ohoku.\ Math.\ J. \textbf{50} (1998), 469--489.
 \bibitem{Eno} M.\ Enokizono, Slopes of fibered surfaces with a finite cyclic automorphism, 
 Michigan Math.\ J.\ \textbf{66} (2017), 125--154.
\end{thebibliography}
\end{document}